\newtheorem{theorem}{Theorem}[section]
\newtheorem{corollary}[theorem]{Corollary}
\theoremstyle{remark}
\newtheorem{remark}[theorem]{Remark}
\theoremstyle{definition}
\newtheorem{definition}{Definition}
\title{\LARGE \bf 
On the Relation between the Minimum Principle and Dynamic Programming for Classical and Hybrid Control Systems}
\author{Ali Pakniyat, \textit{Member, IEEE}, and Peter E. Caines, \textit{Life Fellow, IEEE}% <-this % stops a space

\thanks{This work is supported by the Natural Sciences and Engineering Research Council of Canada (NSERC) and the Automotive Partnership Canada (APC).}% <-this % stops a space
\thanks{Ali Pakniyat and Peter E. Caines are with the Centre for Intelligent Machines (CIM) and the Department of Electrical and Computer Engineering, McGill University, Montreal, QC, Canada \newline
        {\tt\small pakniyat@cim.mcgill.ca, peterc@cim.mcgill.ca}}%
}
\begin{document}

\bstctlcite{IEEEexample:BSTcontrol}

\maketitle
\thispagestyle{empty}
\pagestyle{empty}

%%%%%%%%%%%%%%%%%%%%%%%%%%%%%%%%%%%%%%%%%%%%%%%%%%%%%%%%%%%%%%%%%%%%%%%%%%%%%%%%

%%%%%%%%%%%%%%%%%%%%%%%%%%%%%%%%%%%%%%%%%%%%%%%%%%%%%%%%%%%%%%%%%%%%%%%%%%%%%%%%

\begin{abstract}

Hybrid optimal control problems are studied for a general class of hybrid systems where autonomous and controlled state jumps are allowed at the switching instants and in addition to terminal and running costs switching between discrete states incurs costs. The statements of the Hybrid Minimum Principle and Hybrid Dynamic Programming are presented in this framework and it is shown that under certain assumptions the adjoint process in the Hybrid Minimum Principle and the gradient of the value function in Hybrid Dynamic Programming are governed by the same set of differential equations and have the same boundary conditions and hence are almost everywhere identical to each other along optimal trajectories. Analytic examples are provided to illustrate the results and, in particular, a Riccati formalism for linear quadratic hybrid tracking problems is presented.

\end{abstract}

%%%%%%%%%%%%%%%%%%%%%%%%%%%%%%%%%%%%%%%%%%%%%%%%%%%%%%%%%%%%%%%%%%%%%%%%%%%%%%%%

%%%%%%%%%%%%%%%%%%%%%%%%%%%%%%%%%%%%%%%%%%%%%%%%%%%%%%%%%%%%%%%%%%%%%%%%%%%%%%%%

\section{Introduction}

Pontryagin's Minimum Principle (MP) \cite{Pontryagin} and Bellman's Dynamic Programming (DP) \cite{Bellman} serve as the two key tools in optimal control theory. The Minimum Principle can be considered as a generalization of Hamilton's canonical system in Classical Mechanics as well as an extension of Weierstrass's necessary conditions in Calculus of Variations \cite{SussmannWillems}, while Dynamic Programming theory, including the Hamilton-Jacobi-Bellman (HJB) equation, may be considered as an extension of the Hamilton-Jacobi theory in Classical Mechanics and of Carathéodory's theorem in Calculus of Variations \cite{JacobsonMayne}. Both the MP and DP constitute necessary conditions for optimality which under certain assumptions become sufficient (see e.g. \cite{XYZ, Liberzon, FlemingRishel, Dreyfus, JacobsonMayne}). However, Dynamic Programming is widely used as a set of sufficient conditions for optimality after the optimal control extension of Carathéodory's sufficient conditions in Calculus of Variations (see e.g. \cite{JacobsonMayne, XYZ, Liberzon}).

The relationship between the Minimum Principle and Dynamic Programming, which were developed independently in 1950s, was addressed as early as the formal announcement of the Pontryagin Minimum Principle \cite{Pontryagin}. In the classical optimal control framework, this relationship has been elaborated by many others since then (see e.g. \cite{XYZ, Dreyfus, FlemingRishel, JacobsonMayne, ClarkeVinter, Vinter, Kim, CannarsaFrankowska, CerneaFrankowska, Zhou, SpeyerJacobson, Liberzon}). The result states that, under certain assumptions (see e.g. \cite{FlemingRishel, XYZ}), the adjoint process in the MP and the gradient of the value function in DP are equal, a property which we shall sometimes refer to as the adjoint-gradient relationship. While this relationship has been proved in various forms, the majority of arguments are based on the following two key elements: $\left(i\right)$ the assumption of the openness of the set of all points from which an optimal transition to the reference trajectory is possible \cite[p.~70]{Pontryagin} and $\left(ii\right)$ the inference of the extremality of the reference optimal state for the corresponding optimal control \cite[p.~72]{Pontryagin}. Then with the assumption of twice continuous differentiability of the value function, the method of characteristics (see e.g. \cite{FlemingRishel, XYZ}) can be employed to obtain the aforementioned relationship which is analogous to the derivation of the equivalence of the Hamiltonian system and the Hamilton-Jacobi equation. For certain classes of optimal control problems, the assumption of twice differentiability is intrinsically satisfied  since the total cost can become arbitrarily large and negative if the second partial derivative ceases to exist (see e.g. \cite{SpeyerJacobson}). But in general, even once differentiability of the value function is violated at certain points for numerous problems (see e.g. \cite{VinterBook, Zhou, Liberzon, Rozonoer, SpeyerJacobson, XYZ}). Consequently, the adjoint-gradient relationship is usually expressed within the general framework of nonsmooth analysis that declares the inclusion of the adjoint process in the set of generalized gradients of the value function \cite{ClarkeVinter, Vinter, Zhou, Kim, CannarsaFrankowska, CerneaFrankowska}. However, the general expression of the adjoint-gradient relationship in the framework of nonsmooth analysis is unnecessary for optimal control problems with appropriately smooth vector fields and costs, when the optimal feedback control possesses an admissible set of discontinuities \cite{FlemingRishel}. 

%Thus the feedback regularity property deduced for the mild solution of the HJI (Hamilton-Jacobi-Isaacs) or HJB equation 

In contrast to classical optimal control theory, the relation between the Minimum Principle and Dynamic Programming in the hybrid systems framework has been the subject of limited number of studies (see e.g. \cite{APPECIFAC2014, APPECCDC2014, APPEC1ADHS2015}). One of the main difficulties in the discussion of hybrid systems
% , as noted by Van der Schaft and Schumacher, 
is that the term "hybrid" is not restrictive enough %\cite{VanDerSchaftSchumacher}
 and,
% hence, one necessarily has to restrict to special subclasses of hybrid systems in order to derive useful and generally valid results \cite{VanDerSchaftSchumacher}. Inevitably, 
inevitably, the domains of definition of hybrid systems employed for the derivation of the results of hybrid optimal control theory (see e.g. \cite{ClarkeVinterMultiprocess, ClarkeVinterOptimal, SussmannNonSmooth, Sussmann, RiedingerKratzIungZanne, XuAntsaklis, ShaikhPEC, FarzinPECSIAM, FarzinPEC, GaravelloPiccoli, APPEC2016NAHS, APPEC2ADHS2015, DmitrukKaganovich, CapuzzoDolcettaEvans, BensoussanMenaldi, DharmattiRamaswamy, BarlesDharmattiRamaswamy, BranickyBorkerMitter, ZhangJames, PECEgerstedtMalhame, ScholligPECEgerstedt, ShaikhPECVerification, AzhmyakovBoltyanskiPoznyak, Lygeros, LygerosTomlinSastry, TomlinLygerosSastry, ZhuAntsaklis}) do not necessarily intersect in a general class of systems. This is especially due to the difference in approach and the assumptions required for the derivation of necessary and sufficient optimality conditions in the two key approaches. Hence, in order to establish the relationship between the Hybrid Minimum Principle (HMP) and Hybrid Dynamic Programming (HDP), there is the preliminary task of choosing an appropriately general class of hybrid systems within which the desired HMP and HDP results are valid. This is one of the tasks addressed in this paper before establishing the relation between the Minimum Principle and Dynamic Programming for classical and hybrid control systems.

The organisation of the paper is as follows: In Section \ref{sec:HybridSystems} a definition of hybrid systems is presented that covers a general class of nonlinear systems on Euclidean spaces with autonomous and controlled switchings and jumps allowed at the switching states and times. Further generalizations such as the lying of the system's vector fields in Riemannian spaces \cite{FarzinPECSIAM,FarzinPEC}, nonsmooth assumptions \cite{Sussmann, SussmannNonSmooth, ClarkeVinterOptimal, ClarkeVinterMultiprocess, BensoussanMenaldi, DharmattiRamaswamy}, and state-dependence of the control value sets \cite{GaravelloPiccoli}, as well as restrictions to certain subclasses such as those with regional dynamics \cite{PECEgerstedtMalhame, ScholligPECEgerstedt}, specified families of jumps \cite{BensoussanMenaldi, BranickyBorkerMitter, DharmattiRamaswamy, BarlesDharmattiRamaswamy} and systems with locally controllable trajectories \cite{ShaikhPEC} which are required for the sufficiency of the results are avoided so that for the selected class of hybrid systems, the associated Hybrid Minimum Principle, Hybrid Dynamic Programming theory, together with their relationships, are derived in a unified general framework. 
%Section \ref{sec:HybridSystems} also provides the existence and uniqueness results for the class of hybrid systems presented. 
Similarly, the selected class of hybrid optimal control problems in Section \ref{sec:HOCP} covers a general class of hybrid optimal control problems with a large range of running, terminal and switching costs. With the exception of the infinite horizon problems considered in \cite{CapuzzoDolcettaEvans, BensoussanMenaldi, BranickyBorkerMitter, DharmattiRamaswamy, BarlesDharmattiRamaswamy}, this framework is in accordance with the majority of the work on the Hybrid Minimum Principle (HMP) (see \cite{SussmannNonSmooth, Sussmann, RiedingerKratzIungZanne, XuAntsaklis, ShaikhPEC, FarzinPECSIAM, FarzinPEC, GaravelloPiccoli, PassenbergLeiboldStursberg, APPECCDC2013, APPECCDC2014, APPECIFAC2014, APPEC1ADHS2015}) and a number of publications on Hybrid Dynamic Programming (HDP) (see e.g. \cite{PECEgerstedtMalhame, ScholligPECEgerstedt, ShaikhPECVerification, ZhangJames}) defined on finite horizons.

The statement of the Hybrid Minimum Principle is presented in Section \ref{sec:HMP} where necessary conditions are provided for the optimality of the trajectory and the controls of a hybrid system with fixed initial conditions and a sequence of autonomous or controlled switchings. These conditions are expressed in terms of the minimization of the distinct Hamiltonians indexed by the discrete state sequence of the hybrid trajectory. A feature of special interest is the set of boundary conditions on the adjoint processes and the Hamiltonian functions at autonomous and controlled switching times and states; these boundary conditions may be viewed as a generalization to the optimal control case of the Weierstrass–Erdmann conditions of the calculus of variations \cite{ShaikhPECErdmannnWeierstrass}. 

In Section \ref{sec:HDP}, the Principle of Optimality is employed to introduce the cost to go and the value functions. It is proved that on a bounded set in the state space, the cost to go functions are Lipschitz with a common Lipschitz constant which is independent of the control and hence their infimum, i.e. the value function, is Lipschitz with the same Lipschitz constant. The necessary conditions of Hybrid Dynamic Programming are then established in the form of the Hamilton-Jacobi-Bellman (HJB) equation and the corresponding boundary conditions. 

In Section \ref{sec:HMPvsHDP}, the main result is given describing the relationship of the HMP and HDP. The proof is different in approach from the classical arguments discussed earlier and in particular, the sequence of proof steps appear in a different order. To be specific, the optimality condition, i.e. the Hamiltonian minimization property $\left(ii\right)$ discussed earlier, appears in the last step in order to emphasise the independence of the dynamics of the cost gradient process from the optimality of the control input. Consequently, assumption $\left(i\right)$ is used differently here from the classical proof methods and in particular, the optimality of the transitions back to the reference trajectory is relaxed to the existence of (not necessarily optimal) neighbouring trajectories. After the derivation of the dynamics and boundary conditions for the cost gradient, or sensitivity, corresponding to an arbitrary control input, it is shown that an optimal control leads to the same dynamics and boundary conditions for the optimal cost gradient process as those for the adjoint process. Thus by the existence and uniqueness properties of the governing ODE solutions, it is concluded that the optimal cost gradient, i.e. the gradient of the value function generated by the HJB, is equal to the adjoint process in the corresponding HMP formulation. 

Section \ref{sec:Examples} provides illustrative examples and in Section \ref{sec:Riccati} a Riccati formalism for optimal tracking problems associated with linear quadratic hybrid systems is presented.

%%%%%%%%%%%%%%%%%%%%%%%%%%%%%%%%%%%%%%%%%%%%%%%%%%%%%%%%%%%%%%%%%%%%%%%%%%%%%%%%

%%%%%%%%%%%%%%%%%%%%%%%%%%%%%%%%%%%%%%%%%%%%%%%%%%%%%%%%%%%%%%%%%%%%%%%%%%%%%%%%

\section{Hybrid Systems}
\label{sec:HybridSystems}
\begin{definition}
A \textit{hybrid system (structure)} $\mathbb{H}$ is a septuple
\begin{equation}
\text{\ensuremath{\mathbb{H}}}=\left\{ H,I,\Gamma,A,F,\Xi,\mathcal{M}\right\}, \label{Hybrid System}
\end{equation}
where the symbols in the expression and their governing assumptions are defined as below.

\textbf{A0:} $H:=Q\times M$ is called the \textit{(hybrid) state space}
of the hybrid system $\text{\ensuremath{\mathbb{H}}}$, where

$Q=\left\{ 1,2,...,\left|Q\right|\right\} \equiv\left\{ q_{1},q_{2},...,q_{\left|Q\right|}\right\} ,\left|Q\right|<\infty$, is a finite set of \textit{discrete states (components)}, and

$M=\left\{\mathbb{R}^{n_{q}}\right\}_{q\in Q}$ is a family of finite dimensional continuous valued state spaces, where $n_{q}\leq n<\infty$ for all $q\in Q$.

$I:=\Sigma\times U$ is the set of system input values, where

$\Sigma$ with $\left|\Sigma\right|<\infty$ is the set of discrete state transition and \text{continuous} state jump events extended with the identity \text{element}, and

$U=\left\{ U_{q}\right\}_{q\in Q}$ is the set of \textit{admissible input control values}, where each $U_q \subset\text{\ensuremath{\mathbb{R}}}^{m_q}$ is a compact set in $\text{\ensuremath{\mathbb{R}}}^{m_q}$.

The set of admissible (continuous) control inputs $\mathcal{U}\left(U\right):=L_{\infty}\left(\left[t_{0},T_{*}\right),U\right)$, is defined to be the set of all measurable functions that are bounded up to a set of measure zero on $\left[t_{0},T_{*}\right),T_{*}<\infty$. The boundedness property necessarily holds since admissible input functions take values in the compact set $U$.

$\Gamma:H\times\Sigma\rightarrow H$ is a time independent (partially defined) \textit{discrete state transition map}.

$\Xi:H\times\Sigma\rightarrow H$ is a time independent (partially defined) \textit{continuous state jump transition map}. All $\xi_{\sigma}\in\Xi$, $\xi_{\sigma} : \mathbb{R}^{n_q} \rightarrow  \mathbb{R}^{n_p}$, $p\in A\left(q,\sigma\right)$ are assumed to be continuously differentiable in the
continuous state $x \in \mathbb{R}^{n_q}$. 

$A:Q\times\Sigma\rightarrow Q$ denotes both a deterministic finite automaton and the automaton's associated transition function on the state space $Q$ and event set $\Sigma$, such that for a discrete state $q\in Q$ only the discrete controlled and uncontrolled transitions into the $q$-dependent subset $\left\{ A\left(q,\sigma\right),\sigma\in\Sigma\right\} \subset Q$ occur under the projection of $\Gamma$ on its $Q$ components: $\Gamma:Q\times\mathbb{R}^{n}\times\Sigma\rightarrow H|_{Q}$. In other words, $\Gamma$ can only make a discrete state transition in a hybrid state $\left(q,x\right)$ if the automaton $A$ can make the corresponding transition in $q$.

$F$ is an indexed collection of \textit{vector fields} $\left\{ f_{q}\right\}_{q\in Q}$ such that $f_{q}\in C^{k_{f_q}}\left(\mathbb{R}^{n_q}\times U_q\rightarrow\mathbb{R}^{n_q}\right)$, $k_{f_q}\geq1$, satisfies a joint uniform Lipschitz condition, i.e. there exists $L_{f}<\infty$ such that $\left\Vert f_{q}\left(x_{1},u_{1}\right)-f_{q}\left(x_{2},u_{2}\right)\right\Vert \leq L_{f}\left(\left\Vert x_{1}-x_{2}\right\Vert +\left\Vert u_{1}-u_{2}\right\Vert \right)$, for all $x, x_{1},x_{2}\in\mathbb{R}^{n_q}$, $u, u_1, u_2\in U_q$, $q\in Q$. 

$\mathcal{M}=\left\{ m_{\alpha}:\alpha\in Q\times Q,\right\} $ denotes a collection of \textit{switching manifolds} such that, for any ordered pair $\alpha \equiv \left(\alpha_1,\alpha_2\right) =\left(p,q\right)$, $m_{\alpha}$ is a smooth, i.e. $C^{\infty}$, codimension $1$ sub-manifold of $\mathbb{R}^{n_{\alpha_1}}$, described locally by $m_{\alpha}=\left\{ x:m_{\alpha}\left(x\right)=0\right\}$. It is assumed that $m_{\alpha}\cap m_{\beta}=\emptyset$, whenever $\alpha_1 = \beta_1$ but $\alpha_2 \neq \beta_2$, for all $\alpha,\beta\in Q\times Q$.
\hfill $\square$
\end{definition}
We note that the case where $m_{\alpha}$ is identified with its reverse ordered version $m_{\bar{\alpha}}$ giving $m_{\alpha}=m_{\bar{\alpha}}$ is not ruled out by this definition, even in the non-trivial case $m_{p,p}$ where $\alpha_1 = \alpha_2 = p$. The former case corresponds to the common situation where the switching of vector fields at the passage of the continuous trajectory in one direction through a switching manifold is reversed if a reverse passage is performed by the continuous trajectory, while the latter case corresponds to the standard example of the bouncing ball. 

Switching manifolds will function in such a way that whenever a trajectory governed by the controlled vector field meets the switching manifold transversally there is an autonomous switching to another controlled vector field or there is a jump transition in the continuous state component, or both. A transversal arrival on a switching manifold $m_{q,r}$, at state $x_q \in m_{q,r}=\left\{ x \in \mathbb{R}^{n_{q}} : m_{q,r}\left(x\right)=0 \right\}$ occurs whenever
\begin{equation}
{\nabla m_{q,r} \left(x_q\right)}^T f_q \left(x_q,u_q\right) \neq 0 ,
\label{TransversalityOfTrajectoriesToManifolds}
\end{equation}
for $u_q\in U_q$, and $q,r \in Q$. It is further assumed that:

%In addition to the basic assumptions in A0, it is assumed that:

\textbf{A1:} The initial state $h_{0}:=\left(q_{0},x\left(t_{0}\right)\right)\in H$ is such that $m_{q_{0},q_{j}}\left(x_{0}\right)\neq0$, for all $q_{j}\in Q$.  
\hfill $\square$

\begin{definition}
\label{def:HybridInputProcess}
A \textit{hybrid input process} is a pair $I_{L}\equiv I_{L}^{\left[t_{0},t_{f}\right)}:= \left(S_L,u\right)$ defined on a half open interval $\left[t_{0},t_{f}\right)$, $t_{f}<\infty$, where $u\in{\cal U}$ and $S_L = \left(\left(t_{0},\sigma_{0}\right), \left(t_{1},\sigma_{1}\right), \cdots, \left(t_{L},\sigma_{L}\right)\right)$, $L<\infty$, is a finite \textit{hybrid sequence of switching events} consisting of a strictly increasing sequence of times $\tau_L := \left\{ t_{0},t_{1},t_{2},\ldots,t_{L}\right\}$ and a \textit{discrete event sequence} $\sigma$ with $\sigma_0 = id$ and $\sigma_i \in \Sigma$, $i \in \left\{1,2,\cdots,L\right\}$.\hfill $\square$
\end{definition}

\begin{definition}
\label{def:HybridStateProcess}
A \textit{hybrid state process} (or \textit{trajectory}) is a triple $\left(\tau_L,q,x\right)$ consisting of the sequence of switching times $\tau_L = \left\{ t_{0},t_{1},\ldots,t_{L}\right\}$, $L<\infty$, the associated sequence of discrete states $q=\left\{ q_{0},q_{1},\ldots,q_{L}\right\}$, and the sequence ${x\left(\cdot\right)=\left\{ x_{q_{0}}\left(\cdot\right),x_{q_{1}}\left(\cdot\right),\ldots,x_{q_{L}}\left(\cdot\right)\right\}}$ of piece-wise differentiable functions $x_{q_{i}}\left(\cdot\right):\left[t_{i},t_{i+1}\right)\rightarrow\mathbb{R}^{n_{q_i}}$.\hfill $\square$
\end{definition}

\begin{definition}
The \textit{input-state trajectory} for the hybrid system $\mathbb{H}$ satisfying A0 and A1 is a hybrid input $I_L = \left(S_L,u\right)$ together with its corresponding hybrid state trajectory $\left(\tau_L,q,x\right)$ defined over $\left[t_{0},t_{f}\right),t_{f}<\infty$, such that it satisfies:

%\begin{enumerate}[label=(\roman*)]
%\item 
$\left(i\right)$ \textit{Continuous State Dynamics:} The continuous state \text{component} $x\left(\cdot\right)=\left\{ x_{q_{0}}\left(\cdot\right),x_{q_{1}}\left(\cdot\right),\ldots,x_{q_{L}}\left(\cdot\right)\right\}$ is a piecewise continuous function which is almost everywhere differentiable and on each time segment specified by $\tau_L$ satisfies the dynamics equation
\begin{equation}
{\dot{x}_{q_{i}}\left(t\right) = f_{q_{i}}\left(x_{q_{i}}\left(t\right),u\left(t\right)\right)}, \hspace{1 cm} {a.e.\; t\in\left[t_{i},t_{i+1}\right)},
\end{equation}
with the initial conditions 
\begin{align}
x_{q_{0}}\left(t_{0}\right) &=x_{0} ,
\\[-\belowdisplayskip]
\vspace{-7mm} x_{q_{i}}\left(t_{i}\right) &= \xi_{\sigma_{i}}\left(x_{q_{i-1}}\left(t_{i}-\right)\right) := \xi_{\sigma_{i}}\left(\lim_{t\uparrow t_{i}}x_{q_{i-1}}\left(t\right)\right) , 
\end{align}
for $\left(t_{i},\sigma_{i}\right) \in S_L$. In other words, $x\left(\cdot\right)=\left\{ x_{q_{0}}\left(\cdot\right),x_{q_{1}}\left(\cdot\right),\ldots,x_{q_{L}}\left(\cdot\right)\right\}$ is a piecewise continuous function which is almost everywhere differentiable and is such that each $x_{q_{i}}\left(\cdot\right)$ satisfies 
%\vspace{-3mm}
\begin{equation}
x_{q_{i}}\left(t\right)=x_{q_{i}}\left(t_{i}\right)+\int_{t_{i}}^{t}f_{q_{i}}\left(x_{q_{i}}\left(s\right),u\left(s\right)\right)ds ,
\end{equation}
for $t \in \left[t_{i},t_{i+1}\right)$.

%\item
$\left(ii\right)$  \textit{Autonomous Discrete Transition Dynamics:} An \text{autonomous} (uncontrolled) discrete state transition from $q_{i-1}$ to $q_i$ together with a continuous state jump $\xi_{\sigma_i}$ occurs at the \textit{autonomous switching time} $t_i$ if $x_{q_{i-1}}\left(t_{i}-\right):=\lim_{t\uparrow t_{i}}x_{q_{i-1}}\left(t\right)$ satisfies a switching manifold condition of the form
\begin{equation}
m_{q_{i-1}q_i}\left(x_{q_{i-1}}\left(t_{i}-\right)\right) = 0 ,
\end{equation}
for $q_i\in Q$, where $m_{q_{i-1}q_i}\left(x\right) = 0$ defines a $\left(q_{i-1},q_i\right)$ switching manifold and it is not the case that either $\left(i\right)$ $x_{q_{i-1}}\left(t_{i}-\right)\in \partial m_{q_{i-1}q_i}$ or $\left(ii\right)$ $f_{q_i-1}\left(x_{q_{i-1}}\left(t_{i}-\right),u_{q_{i-1}}\left(t_{i}-\right)\right) \perp \nabla m_{q_{i-1}q_i}\left(x_{q_{i-1}}\left(t_{i}-\right)\right)$, i.e. $t_i$ is not a manifold termination instant (see \cite{PECHybridNotes}). With the assumptions A0 and A1 in force, such a transition is well defined and labels the event $\sigma_{q_{i-1}q_i} \in \Sigma$, that corresponds to the hybrid state transition
\begin{equation}
h\left(t_{i}\right) \! \equiv \! \left(q_{i},x_{q_{i}}\left(t_{i}\right)\right) \!=\! \left(\Gamma \! \left(q_{i-1} \!,\sigma_{q_{i-1}q_i}\right),\xi_{\sigma_{q_{i-1}q_i}} \!\! \left(x_{q_{i-1}}^{\left(t_{i}-\right)}\right)\right) .
\end{equation}

%\item
$\left(iii\right)$  \textit{Controlled Discrete Transition Dynamics:} A controlled discrete state transition together with a controlled continuous state jump $\xi_{\sigma}$ occurs at the \textit{controlled discrete event time} $t_i$ if $t_i$ is not an autonomous discrete event time and if there exists a controlled discrete input event $\sigma_{q_{i-1}q_i} \in \Sigma$ for which
\begin{equation}
h\left(t_{i}\right) \! \equiv \! \left(q_{i},x_{q_{i}}\left(t_{i}\right)\right) \!=\! \left(\Gamma \! \left(q_{i-1} \!,\sigma_{q_{i-1}q_i}\right),\xi_{\sigma_{q_{i-1}q_i}} \!\! \left(x_{q_{i-1}}^{\left(t_{i}-\right)}\right)\right),
\end{equation}
with $\left(t_{i},\sigma_{q_{i-1}q_{i}}\right) \in S_L$ and $q_{i}\in A\left(q_{i-1}\right)$.
\hfill $\square$
%\end{enumerate}
\end{definition}
\vspace{6pt}

\begin{theorem} \textnormal{\textbf{Existence and Uniqueness of Solution Trajectories for Hybrid Systems \cite{PECHybridNotes}:}}
\label{theorem:ExistenceUniqueness}
A hybrid system $\mathbb{H}$ with an initial hybrid state $\left(q_{0},x_{0}\right)$ satisfying assumptions A0 and A1 possesses a unique hybrid input-state trajectory on $\left[t_{0},T_{**}\right)$, where $T_{**}$ is the least of

%\begin{enumerate}[label=(\roman*)]
%\item
$\left(i\right)$ $T_{*} \leq \infty $, where $\left[t_{0},T_{*}\right)$ is the temporal domain of the definition of the hybrid system,

%\item
$\left(ii\right)$ a manifold termination instant $T_{*}$ of the trajectory $h\left(t\right) = h\left(t,\left(q_0,x_0\right),\left(S_L,u\right) \right)$, $t\geq t_0$, at which either $x\left(T_{*}-\right) \in \partial m_{q\left(T_*-\right)q\left(T_*\right)}$ or $f_{q\left(T_*-\right)}\left(x\left(T_*-\right),u\left(T_*-\right)\right) \perp \nabla m_{q\left(T_*-\right)q\left(T_*\right)}\left(x\left(T_{*}-\right)\right)$.
\hfill $\square$
%\end{enumerate}
\end{theorem}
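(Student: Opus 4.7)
The plan is to prove the result by induction on the switching index $i \in \{0,1,\ldots,L\}$, concatenating local Carath\'eodory solutions of $\dot{x} = f_{q_i}(x,u(t))$ across transition times prescribed in the input sequence $S_L$ and across first arrivals at autonomous switching manifolds. For the base step I would fix $q(t) \equiv q_0$ on the initial segment and invoke the standard Carath\'eodory existence--uniqueness theorem, which applies because $u \in L_\infty([t_0,T_*),U)$ is measurable and bounded while $f_{q_0}$ is uniformly $L_f$-Lipschitz in $(x,u)$; this produces a unique absolutely continuous $x_{q_0}(\cdot)$ on a maximal subinterval. By A1, $m_{q_0,q_j}(x_0) \ne 0$ for every $q_j \in Q$, so continuity of $t \mapsto m_{q_0,q_j}(x_{q_0}(t))$ gives a well-defined first-hit time $t_1^{\mathrm{aut}} > t_0$ on the union of outgoing manifolds (or no such time, in which case the trajectory simply flows until $T_*$ or a controlled event in $S_L$ occurs).

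For the inductive step I would handle three mutually exclusive sub-cases at the candidate transition time $t_i$. If a controlled event $(t_i, \sigma_i) \in S_L$ is scheduled before any autonomous hit, the discrete jump $q_i = \Gamma(q_{i-1},\sigma_i) \in A(q_{i-1})$ and the continuous jump $\xi_{\sigma_i}(x_{q_{i-1}}(t_i-))$ are single-valued operations, so the post-switch state is uniquely determined and the induction resumes from $(q_i, x_{q_i}(t_i))$. If instead an autonomous transversal arrival occurs first, the disjointness hypothesis $m_\alpha \cap m_\beta = \emptyset$ whenever $\alpha_1 = \beta_1$ and $\alpha_2 \ne \beta_2$ uniquely identifies the target mode $q_i$, and the $C^1$ reset map $\xi_{\sigma_{q_{i-1}q_i}}$ gives a unique new continuous state. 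If the autonomous arrival is non-transversal or reaches $\partial m_{q_{i-1}q_i}$, this is exactly a manifold termination instant of clause (ii), and one sets $T_{**}$ equal to this time; no further extension is needed. Because $L < \infty$ by assumption on the hybrid input process, the induction terminates after finitely many stages and yields a trajectory defined on $[t_0, T_{**})$.

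Uniqueness threads through every step: Lipschitz continuity yields ODE uniqueness on each segment, single-valuedness of $\Gamma$ and $\xi_\sigma$ yields unique resets, continuity of the manifold functions yields a unique first-hit time, and manifold disjointness yields a unique target mode. The main obstacle, in my judgement, is the geometric bookkeeping at autonomous switching instants: one must simultaneously exclude the pathology of a trajectory meeting two distinct outgoing manifolds $m_{q,r_1}$ and $m_{q,r_2}$ at the same time (ruled out precisely by the disjointness hypothesis) and the pathology of a grazing or boundary arrival (folded into the termination clause by the transversality condition stated earlier in the excerpt). Once these geometric conditions are dispatched, what remains is a finite iteration of the classical Carath\'eodory theorem across explicitly single-valued reset maps, which is standard.
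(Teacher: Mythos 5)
The paper does not actually prove Theorem \ref{theorem:ExistenceUniqueness}; it is imported from \cite{PECHybridNotes} and stated without argument, so there is no internal proof to compare yours against. That said, your concatenation-of-Carath\'eodory-solutions scheme is the standard route and its main steps are sound: the joint uniform Lipschitz condition in A0 gives global existence and uniqueness of an absolutely continuous solution on each inter-event segment for any measurable bounded $u$ (linear growth rules out finite escape), the reset maps $\Gamma$ and $\xi_{\sigma}$ are single-valued, the disjointness condition $m_{\alpha}\cap m_{\beta}=\emptyset$ for $\alpha_1=\beta_1$, $\alpha_2\neq\beta_2$ pins down the target mode at an autonomous event, grazing or boundary arrivals are exactly the termination clause $(ii)$, and $L<\infty$ closes the induction.

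There is, however, one point you gloss over that a complete proof must address. Your first-hit-time argument at the start of each segment relies on that segment's initial condition lying off every outgoing switching manifold, but A1 guarantees this only for $\left(q_0,x_0\right)$ at $t_0$. Nothing in A0--A1 prevents the post-jump state $\xi_{\sigma_i}\left(x_{q_{i-1}}\left(t_i-\right)\right)$ from landing on some $m_{q_i,q_k}$, in which case the ``first hit time'' of the next segment degenerates to $t_i$ itself; your induction would then generate a second discrete transition at the same instant, contradicting the strict monotonicity of $\tau_L$ required by Definition \ref{def:HybridInputProcess} and potentially opening an instantaneous cascade. A rigorous version must either impose an A1-type hypothesis on post-jump states at every stage, or fold such instants into the termination time $T_{**}$ (the paper's remark that Zeno times ``are ruled out by Definitions 2, 3 and 4'' suggests the issue is being handled definitionally rather than dynamically, but you should make your choice explicit). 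A second, more minor omission is the tie-breaking rule when a scheduled controlled event in $S_L$ coincides with an autonomous arrival: Definition 4$\left(iii\right)$ gives autonomous transitions precedence (``if $t_i$ is not an autonomous discrete event time''), and your ``mutually exclusive sub-cases'' need to cite that clause to be well defined.
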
 
We note that Zeno times, i.e. accumulation points of discrete transition times, are ruled out by Definitions 2, 3 and 4. 
\vspace{6pt}

%%%%%%%%%%%%%%%%%%%%%%%%%%%%%%%%%%%%%%%%%%%%%%%%%%%%%%%%%%%%%%%%%%%%%%%%%%%%%%%%

%%%%%%%%%%%%%%%%%%%%%%%%%%%%%%%%%%%%%%%%%%%%%%%%%%%%%%%%%%%%%%%%%%%%%%%%%%%%%%%%

\section{Hybrid Optimal Control Problem}
\label{sec:HOCP}
\textbf{A2:} Let $\left\{ l_{q}\right\}_{q\in Q},l_{q}\in C^{n_{l}}\left(\mathbb{R}^{n_q}\times U_q\rightarrow\mathbb{R}_{+}\right),n_{l} \geq 1$, be a family of cost functions with $n_l = 2$ unless otherwise stated; $\left\{ c_{\sigma_j}\right\}_{\sigma_j\in\Sigma}\in C^{n_{c}}\left(\mathbb{R}^{n_{q_{j-1}}}\times\Sigma\rightarrow\mathbb{R}_{+}\right),n_{c}\geq1$, be a family of switching cost functions; and $g\in C^{n_{g}}\left(\mathbb{R}^{n_{q_f}}\rightarrow\mathbb{R}_{+}\right),n_{g}\geq1$,
be a terminal cost function satisfying the following assumptions:

%\begin{enumerate}[label=(\roman*)]
%\item
$\left(i\right)$ There exists $K_{l}<\infty$ and $1\leq\gamma_{l}<\infty$ such that
$\left|l_{q}\left(x,u\right)\right|\leq K_{l}\left(1+\left\Vert x\right\Vert ^{\gamma_{l}}\right)$ and $\left|l_{q}\left(x_{1},u_{1}\right)-l_{q}\left(x_{2},u_{2}\right)\right|\leq K_{l}\left(\left\Vert x_{1}-x_{2}\right\Vert +\left\Vert u_{1}-u_{2}\right\Vert \right)$, for all
$x\in\mathbb{R}^{n_q}$, $u\in U_q$, $q\in Q$.

%\item
$\left(ii\right)$ There exists $K_{c}<\infty$ and $1\leq\gamma_{c}<\infty$ such that
$\left|c_{\sigma_j}\left(x\right)\right|\leq K_{c}\left(1+\left\Vert x\right\Vert ^{\gamma_{c}}\right)$,
$x\in\mathbb{R}^{n_{q_{j-1}}},\sigma_j\in\Sigma$.

%\item
$\left(iii\right)$ There exists $K_{g}<\infty$ and $1\leq\gamma_{g}<\infty$ such that
$\left|g\left(x\right)\right|\leq K_{g}\left(1+\left\Vert x\right\Vert ^{\gamma_{g}}\right)$,
$x\in\mathbb{R}^{n_{q_{f}}}$.

\hfill $\square$
%\end{enumerate}

Consider the initial time $t_{0}$, final time $t_{f}<\infty$, and initial
hybrid state $h_{0}=\left(q_{0},x_{0}\right)$. With the number
of switchings $L$ held fixed, the set of all hybrid input trajectories  in Definition \ref{def:HybridInputProcess} with exactly $L$ switchings is denoted by $\bm{I_{L}}$, and for all $I_{L} :=\left(S_{L},u\right) \in \bm{I_{L}}$ the hybrid switching sequences take the form
$S_{L}= \left\{ \left(t_{0},id\right),\left(t_{1},\sigma_{q_{0}q_{1}}\right),  \ldots, \left(t_{L},\sigma_{q_{L-1}q_{L}}\right)\right\}
\equiv\left\{ \left(t_{0},q_{0}\right),\left(t_{1},q_{1}\right),\ldots,\left(t_{L},q_{L}\right)\right\}$ and the corresponding continuous control inputs are of the form $u\in\mathcal{U} = \bigcup_{i=0}^{L} L_{\infty}\left(\left[t_i,t_{i+1}\right),U_{q_i}\right)$, where $t_{L+1}=t_f$.

Let $I_{L}$ be a hybrid input trajectory that by Theorem \ref{theorem:ExistenceUniqueness} results in a unique hybrid state process. Then hybrid performance functions for the corresponding hybrid input-state trajectory are defined as
\begin{multline}
J\left(t_{0},t_{f},h_{0},L;I_{L}\right):=
\sum_{i=0}^{L}\int_{t_{i}}^{t_{i+1}}l_{q_{i}}\left(x_{q_{i}}\left(s\right),u_{q_{i}}\left(s\right)\right)ds
\\
+\sum_{j=1}^{L}c_{\sigma_{j}}\left(t_{j},x_{q_{j-1}}\left(t_{j}-\right)\right)+g\left(x_{q_{L}}\left(t_{f}\right)\right) .
\label{Hybrid Cost}
\end{multline}

\begin{definition}
\label{def:BHOCP}
The \textit{Bolza Hybrid Optimal Control Problem} (BHOCP) is defined as the infimization of the hybrid cost \eqref{Hybrid Cost} over the family of hybrid input trajectories $\bm{I_{L}}$, i.e.
\begin{equation}
J^{o}\left(t_{0},t_{f},h_{0},L\right)=\inf_{I_{L} \in \bm{I_{L}}}J\left(t_{0},t_{f},h_{0},L;I_{L}\right) . \label{HOCP}
\end{equation}
\hfill $\square$
\end{definition}

\begin{definition}
\label{def:MHOCP}
The \textit{Mayer Hybrid Optimal Control Problem} (MHOCP) is defined as a special case of the BHOCP where $l_q\left(x_q,u_q\right) = 0$ for all $q \in Q$ and $c_{\sigma_j}\left(t_j,x_{q_{j-1}}\left(t_{j}-\right)\right) = 0$ for all $\sigma_j \in \Sigma$, $1 \leq j \leq L$.
\hfill $\square$
\end{definition}

\begin{remark}
\label{rem:BHOCPandMHOCPrelationship}
\textit{The Relationship between Bolza and Mayer Hybrid Optimal Control Problems:} 
In general, a BHCOP can be converted into an MHCOP with the introduction of the auxiliary state component $z$ and the extension of the continuous valued state to
\begin{equation}
\hat{x}_{q}:=\left[\begin{array}{c}
z_{q}\\
x_{q}
\end{array}\right] .\label{ExtendedState}
\end{equation}

With the definition of the augmented vector fields as
\begin{equation}
\dot{\hat{x}}_{q}=\hat{f}_{q}\left(\hat{x_q},u_q\right):=\left[\begin{array}{c}
l_{q}\left(x_q,u_q\right)\\
f_{q}\left(x_q,u_q\right)
\end{array}\right],\label{ExtendedField}
\end{equation}
subject to the initial condition
\begin{equation}
\hat{h}_{0}=\left(q_{0},\hat{x}_{q_{0}}\left(t_{0}\right)\right)=\left(q_{0},\left[\begin{array}{c}
0\\
x_{0}
\end{array}\right]\right),\label{ExtendedIC}
\end{equation}
and with the switching boundary conditions governed by the extended jump
function defined as
\begin{equation}
\hat{x}_{q_j} \! \left(t_{j}\right)=\hat{\xi}_{\sigma_j}\left(\hat{x}_{q_{j-1}}\! \left(t_{j}-\right)\right):=\!\! \left[\!\!\!\! \begin{array}{c}
z_{q_{j-1}} \! \left(t_{j}-\right)+c_{\sigma_j}\left(x_{q_{j-1}} \! \left(t_{j}-\right)\right)\\
\xi_{\sigma_j}\left(x_{q_{j-1}}\left(t_{j}-\right)\right)
\end{array} \!\!\!\!\! \right],\label{ExtendedJump2}
\end{equation}
%it can be verified that  
the cost  \eqref{Hybrid Cost} of the BHOCP turns into the Mayer form with
\begin{equation}
J\left(t_{0},t_{f},\hat{h}_{0},L;I_{L}\right):=\hat{g}\left(\hat{x}_{q_{L}}\left(t_{f}\right)\right),\label{Mayer - full transformation-1}
\end{equation}
where $\hat{g}\left(\hat{x}_{q_{L}}\left(t_{f}\right)\right)=z_{q_L}\left(t_{f}\right)+g\left(x_{q_L}\left(t_{f}\right)\right)$.
\hfill $\square$
\end{remark}

%%%%%%%%%%%%%%%%%%%%%%%%%%%%%%%%%%%%%%%%%%%%%%%%%%%%%%%%%%%%%%%%%%%%%%%%%%%%%%%%

%%%%%%%%%%%%%%%%%%%%%%%%%%%%%%%%%%%%%%%%%%%%%%%%%%%%%%%%%%%%%%%%%%%%%%%%%%%%%%%%

\section{Hybrid Minimum Principle}
\label{sec:HMP}

\begin{theorem} \textnormal{\textbf{\cite{PhDthesisAP}}} 
\label{theorem:HMP}
Consider the hybrid system $\mathbb{H}$ subject to assumptions A0-A2, and the HOCP \eqref{HOCP} for the hybrid performance function \eqref{Hybrid Cost}.
%, with a given order and given values for the controlled discrete input events.
Define the family of system Hamiltonians by 
\begin{equation}
H_{q}\left(x_q,\lambda_q,u_q\right)=\lambda_q^{T}f_{q}\left(x_q,u_q\right)+l_{q}\left(x_q,u_q\right),\label{Hamiltonian - Bolza}
\end{equation}
$x_q, \lambda_q \in \mathbb{R}^{n_{q}}$, $u_q \in U_{q}$, $q\in Q$. Then for an optimal switching sequence $q^{o}$ and along the corresponding optimal trajectory $x^{o}$, there exists an adjoint process $\lambda^{o}$ such that
\begin{align}
\dot{x}_q^{o} &=\frac{\partial H_{q^{o}}}{\partial \lambda_q}\left(x_q^{o},\lambda_q^{o},u_q^{o}\right), \label{StateDynamics}
\\
\dot{\lambda}_q^{o} &=-\frac{\partial H_{q^{o}}}{\partial x_q}\left(x_q^{o},\lambda_q^{o},u_q^{o}\right),\label{lambda dynamics}
\end{align}
almost everywhere $\; t\in\left[t_{0},t_{f}\right]$ with
\begin{align}
x^o_{q^o_0}\left(t_0\right) &=x_0,\\
x^o_{q^o_j}\left(t_{j}\right) &=\xi_{\sigma_j}\left(x^o_{q^o_{j-1}}\left(t_{j}-\right)\right), \label{StateBC} \\
\lambda^{o}_{q^o_L}\left(t_{f}\right) &=\nabla g\left(x^{o}_{q^o_L}\left(t_{f}\right)\right),\label{lambda final condition}\\
\hspace{-2mm} \lambda^{o}_{q^o_{j-1}}\left(t_{j}-\right)\equiv\lambda^{o}_{q^o_{j-1}}\left(t_{j}\right) &={\nabla \xi_{\sigma_j}}^{T}\lambda^{o}_{q^o_j}\left(t_{j}+\right) + \nabla c_{\sigma_j}  +p \nabla m,  \label{lambda boundary condition}
\end{align}
where $p\in\mathbb{R}$ when $t_{j}$ indicates the time of an autonomous switching, and $p=0$ when $t_{j}$ indicates the time of a controlled switching. Moreover, 
\begin{equation}
H_{q^{o}}\left(x^{o},\lambda^{o},u^{o}\right)\leq H_{q^{o}}\left(x^{o},\lambda^{o},u\right),\label{HminWRTu}
\end{equation}
for all $u\in U_{q^o}$, that is to say the Hamiltonian is minimized with respect to the control input, and at a switching time $t_{j}$ the Hamiltonian
satisfies
\begin{multline}
H_{q_{j-1}^{o}}\left.\left(x^{o},\lambda^{o},u^{o}\right)\right|_{t_{j}-}\equiv H_{q_{j-1}^{o}}\left(t_{j}\right)
\\
=H_{q_{j}^{o}}\left(t_{j}\right)\equiv H_{q_{j}^{o}}\left.\left(x^{o},\lambda^{o},u^{o}\right)\right|_{t_{j}+}.\label{Hamiltonian jump}
\end{multline}
\hfill $\square$
\end{theorem}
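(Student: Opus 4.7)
The plan is to reduce the Bolza cost \eqref{Hybrid Cost} to Mayer form via the extended state $\hat x = (z,x)$ of Remark~\ref{rem:BHOCPandMHOCPrelationship} so the cost becomes purely terminal $\hat g(\hat x_{q_L}(t_f))$, and then, with the optimal discrete sequence $q^o$ held fixed, to treat the problem as a concatenation of $L+1$ classical free-endpoint optimal control problems on the intervals $[t_i,t_{i+1})$ tied together by the jump maps $\hat\xi_{\sigma_j}$ and either a manifold hitting constraint (autonomous switch) or a free event time (controlled switch). Necessary conditions will then be extracted by considering admissible first-order variations of the control, of the switching times, and of the pre- and post-jump states, and requiring that each such variation produce a nonnegative first-order change in $\hat g(\hat x^o_{q^o_L}(t_f))$.

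On each open segment $(t_i,t_{i+1})$ I would apply the classical Pontryagin needle-variation argument at a Lebesgue point $s$ of $u^o$: perturb the control to an arbitrary $v\in U_{q^o_i}$ on a window of length $\varepsilon$, and propagate the resulting first-order state variation $\delta x$ forward via the linearised variational equation with seed $f_{q^o_i}(x^o(s),v)-f_{q^o_i}(x^o(s),u^o(s))$. Defining $\lambda^o$ as the backward-in-time transport of $\nabla\hat g(\hat x^o(t_f))$ through this linearisation and through the differentials of the jump maps immediately produces the costate ODE \eqref{lambda dynamics} between switches and the terminal condition \eqref{lambda final condition}, while nonnegativity of the first-order cost change for every $v$ yields the pointwise Hamiltonian minimisation \eqref{HminWRTu}.

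The delicate work is at each switching instant $t_j$. For a controlled switch both $t_j$ and $\sigma_j$ are free, so an arbitrary variation $\delta t_j$ must not decrease the cost; carrying this variation through the chain rule $\delta x(t_j+) = \nabla\xi_{\sigma_j}\,\delta x(t_j-) + [\nabla\xi_{\sigma_j} f_{q^o_{j-1}} - f_{q^o_j}]\,\delta t_j$ and unpacking the $z$-component via \eqref{ExtendedJump2} yields simultaneously the costate jump \eqref{lambda boundary condition} with $p=0$ together with the Hamiltonian continuity \eqref{Hamiltonian jump}. For an autonomous switch, $t_j$ is determined implicitly by $m_{q^o_{j-1}q^o_j}(x(t_j-))=0$, so admissible pre-jump variations must satisfy $\nabla m^T\,\delta x(t_j-) + (\nabla m^T f_{q^o_{j-1}})\,\delta t_j = 0$; introducing a scalar Lagrange multiplier $p$ for this equality constraint and performing the same backward transport produces \eqref{lambda boundary condition} with the nonzero $p\,\nabla m$ term, while transversality \eqref{TransversalityOfTrajectoriesToManifolds} guarantees $\nabla m^T f_{q^o_{j-1}}\neq 0$ and hence that $p$ is uniquely determined. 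The switching cost $c_{\sigma_j}$ is picked up automatically through the $z$-component of \eqref{ExtendedJump2}, producing the $\nabla c_{\sigma_j}$ term.

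The main obstacle I anticipate is showing that the admissible variations at each switch are rich enough to make the derived conditions not merely necessary but fully determine the multiplier structure. Specifically, one must verify that a needle variation placed inside the segment preceding an autonomous switch truly moves $t_j$ to first order (this is exactly where assumption A1 and the transversality condition \eqref{TransversalityOfTrajectoriesToManifolds} are indispensable, since without them the constraint $\nabla m^T\delta x(t_j-)=0$ would admit no compatible $\delta t_j$), and that at a controlled switch variations of $t_j$ and of the post-jump location together span enough directions to separate $\nabla c_{\sigma_j}$ from the $\nabla\xi^T\lambda^o$ contribution in \eqref{lambda boundary condition}. Once the variational calculus at every $t_j$ is organised consistently, the backward-in-time ODE for $\lambda^o$ is solved uniquely from the terminal data \eqref{lambda final condition} together with the jump recursion \eqref{lambda boundary condition}, completing the argument.
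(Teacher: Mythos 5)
The paper does not actually prove Theorem \ref{theorem:HMP}: it is imported verbatim from \cite{PhDthesisAP} and stated without proof, so there is no in-paper argument to match your proposal against line by line. Your outline --- Mayer reduction via the extended state, freezing the optimal discrete sequence, needle variations at Lebesgue points on each segment, and first-order analysis of $(\delta x(t_j-),\delta t_j)$ at each switch with the manifold constraint handled by a multiplier $p$ --- is the classical variational route used in the HMP literature the paper cites (e.g.\ \cite{ShaikhPEC}), and it is a sound plan. It is, however, genuinely different from the nearest in-paper analogue, namely the proof of the cost-sensitivity theorem in Section \ref{sec:HMPvsHDP}: there the authors differentiate the cost-to-go directly with respect to the initial state along the flow of an \emph{arbitrary} admissible feedback control, obtaining \eqref{GradJdynamics}--\eqref{Pequation} without any appeal to optimality, and invoke the Hamiltonian minimization only at the very last step (this ordering is the point they emphasize in the introduction). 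Your approach buys the minimization condition \eqref{HminWRTu} and the multiplier structure in one variational package; theirs buys the boundary conditions for every control, which is what makes Theorem \ref{theorem:HMPHDPrelation} go through.

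Two places where your sketch is thinner than it should be. First, you derive the Hamiltonian continuity \eqref{Hamiltonian jump} only at controlled switches (from the free variation of $t_j$); at an autonomous switch $t_j$ is induced by the manifold crossing, and continuity there must be extracted from the coefficient of $\delta t_j$ after substituting the constraint $\nabla m^T\delta x(t_j-)+(\nabla m^T f_{q^o_{j-1}})\,\delta t_j=0$, i.e.\ it is equivalent to the specific value of $p$ in \eqref{Pequation} rather than an independent stationarity condition --- this step is stated in the theorem for \emph{all} switching times and needs to be written out. Second, the needle-variation argument for \eqref{HminWRTu} on segment $i$ requires propagating $\delta x$ forward through all subsequent autonomous switchings, which presupposes the jump rule \eqref{lambda boundary condition} for the backward transport; you should make explicit that the adjoint boundary conditions are established (by backward induction from $t_f$) before the minimization condition is harvested, and that transversality \eqref{TransversalityOfTrajectoriesToManifolds} guarantees the perturbed trajectory crosses the same manifold sequence for $\varepsilon$ small. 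With those two points filled in, the argument is complete.
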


We note that the gradient of the state transition jump map $\nabla \xi_{\sigma_j} = \frac{\partial \xi_{\sigma_j}{(x_{q_{j-1}})} }{\partial x_{q_{j-1}}} \equiv \frac{\partial x_{q_j}}{\partial x_{q_{j-1}}} $ is not necessarily square due to the possibility of changes in the state dimension, but the boundary conditions \eqref{lambda boundary condition} are well-defined for hybrid optimal control problems satisfying A0-A2.

\begin{remark}
\label{remark:BolzaMayer}
For an HOCP represented in the Bolza form (as in Definition \ref{def:BHOCP}) and the corresponding Mayer representation (as in Definition \ref{def:MHOCP} and through Remark \ref{rem:BHOCPandMHOCPrelationship}) the corresponding HMP results are given by (see also \cite{APPECCDC2014})
\begin{multline}
\hat{\lambda}_q^T \hat{f}_q\left(\hat{x}_q,u_q\right)\equiv\hat{H}_q\left(\hat{x},\hat{\lambda},u\right) 
\\
=H_q\left(x,\lambda,u\right) \equiv \lambda_q^T f_q\left(x_q,u_q\right) + l_q\left(x_q,u_q\right) ,
\label{HamiltonianEquivalence}
\end{multline}
and 
\begin{equation}
\hat{\lambda}_q\left(t\right)=\left[\begin{array}{c}
1\\
\lambda_q\left(t\right)
\end{array}\right] . \label{ExtendedAdjoint}
\end{equation}
\hfill $\square$
\end{remark}

%%%%%%%%%%%%%%%%%%%%%%%%%%%%%%%%%%%%%%%%%%%%%%%%%%%%%%%%%%%%%%%%%%%%%%%%%%%%%%%%

%%%%%%%%%%%%%%%%%%%%%%%%%%%%%%%%%%%%%%%%%%%%%%%%%%%%%%%%%%%%%%%%%%%%%%%%%%%%%%%%

%\section{Preliminaries for Hybrid Dynamic Programming}
%\label{sec:PreHDP}
\section{Hybrid Dynamic Programming}
\label{sec:HDP}

\begin{definition}
Consider the hybrid system \eqref{Hybrid System} and the class of HOCP \eqref{HOCP} with the hybrid cost \eqref{Hybrid Cost}. At an arbitrary instant $t\in\left[t_{0},t_{f}\right]$, an initial hybrid state $h\!=\!\left(q,\! x\right) \!\in\! Q \!\times\! \mathbb{R}^{n_q}$, and a specified number of remaining switchings $L\!-\!j\!+\!1$ corresponding to $t \!\in\! \left(t_{j\!-\!1},t_{j}\right]$, the \textit{cost to go} subject to the hybrid input process $I_{L-j+1} \equiv I_{L-j+1}^{\left[t,t_{f}\right]}$ is given by \vspace{-2mm}
\begin{multline}
J\left(t,t_{f},q,x,L-j+1;I_{L-j+1}\right) \vspace{-6pt}
\\[-4pt]
=\int_{t}^{t_{j}}l_{q}\left(x_q,u_q\right)ds+\sum_{i=j}^{L}c_{\sigma_{q_{i-1}q_{i}}}\left(t_{i},x_{q_{i-1}}\left(t_{i}-\right)\right)
\\[-\belowdisplayskip]
+\sum_{i=j}^{L}\int_{t_{i}}^{t_{i+1}}l_{q_{i}}\left(x_{q_{i}}\left(s\right),u\left(s\right)\right)ds+g\left(x_{q_{L}}\left(t_{f}\right)\right).
\label{CostToGoDefinition}
\end{multline}

The \textit{value function} is defined as the optimal cost to go over the corresponding family of hybrid control inputs, i.e. \vspace{-4pt}
\begin{equation}
V\left(t,q,x,L\!-\!j\!+\!1\right):=\inf_{I_{L\!-\!j\!+\!1}}J\left(t,t_{f},q,x,L\!-\!j\!+\!1;I_{L\!-\!j\!+\!1}\right) . \hspace{-4pt}
\label{ValueFunctionDefinition}
\vspace{-10pt}
\end{equation}
\hfill $\square$
\end{definition}

\begin{theorem}
\label{theorem:VisLipschitz}
With the assumptions A0-A2 in force, the value function \eqref{ValueFunctionDefinition} is Lipschitz in $x$ uniformly in $t$ for all $t\in \bigcup_{i=0}^{L} \left(t_i,t_{i+1}\right)$, i.e. for $B_{r}=\left\{ x\in\mathbb{R}^{n}:\left\Vert x\right\Vert <r\right\}$ and for all $t\in \left(t_i,t_{i+1}\right)$, $x \equiv x_t \in B_r$ there exist a neighbourhood $N_{r_x}\left(x_t\right)$ and a constant $0<K<\infty$ such that
\begin{equation}
\!\left|\! V\!\left(t,q,x_{t}\!,\! L\!-\! j\!+\!1\right)\!-\! V\!\left(s,q,x_{s}\!,\! L\!-\! j\!+\!1\right)\!\right|\!\!<\!\! K\!\left(\!\left\Vert \! x_{t}\!-\! x_{s}\!\right\Vert ^{2}\!\!+\!\left|\! s\!-\! t\!\right|^{2}\!\right)\!\!^{\frac{1}{2}},
\label{VisLipschitz}
\end{equation}
for $s\in \left(t_i,t_{i+1}\right)$ and $x_s \in N_{r_x}\left(x_t\right)$.
\hfill $\square$
\end{theorem}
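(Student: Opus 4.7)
The plan is to obtain the joint Lipschitz bound \eqref{VisLipschitz} by separately establishing Lipschitz continuity of $V$ in the state argument at fixed time and in the time argument at fixed state, and then combining them via the triangle inequality together with the elementary estimate $\|x_t-x_s\|+|t-s|\le\sqrt{2}\,(\|x_t-x_s\|^2+|t-s|^2)^{1/2}$. In both steps the strategy is to take a near-optimal hybrid input process for one starting condition, apply it (or an admissible modification of it) from the perturbed starting condition, and to show that the resulting cost differs by $O(\|x_t-x_s\|+|t-s|)$ using the Lipschitz and linear-growth bounds on $f_q$, $l_q$, $c_{\sigma_j}$ and $g$ furnished by assumptions A0 and A2.

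For Lipschitz continuity in $x$ at fixed $t\in(t_i,t_{i+1})$, I would fix $\epsilon>0$, pick $I^*=(S^*,u^*)\in\bm{I_{L-j+1}}$ that is $\epsilon$-optimal from $(t,q,x_t)$, and apply the same continuous control $u^*$ (together with the same discrete event sequence) starting from $(t,q,x_s)$ for $x_s\in N_{r_x}(x_t)$ to obtain a perturbed trajectory $\tilde x$. Gronwall's inequality applied on each continuous segment, with the uniform Lipschitz constant $L_f$ of A0, gives $\|\tilde x(r)-x^*(r)\|\le e^{L_f(r-t)}\|x_t-x_s\|$, and this bound propagates through each jump by the $C^1$ regularity of $\xi_{\sigma_j}$. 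The Lipschitz and linear-growth properties of A2 then convert trajectory proximity into cost proximity with a constant depending only on $L_f$, $L$, $t_f-t_0$, $r$ and the constants in A2.

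The principal obstacle is that $\tilde x$ generated from $x_s$ may cross a nominal autonomous switching manifold $m_{q_k^*,q_{k+1}^*}$ at a time different from $t_k^*$, or fail to cross it within $[t,t_f]$. The transversality relation \eqref{TransversalityOfTrajectoriesToManifolds}, which holds along the nominal trajectory by the definition of an autonomous switching, ensures $\nabla m_{q_k^*,q_{k+1}^*}^{T}f_{q_k^*}\neq 0$ at each nominal crossing, so the implicit function theorem makes the $k$-th autonomous switching time a locally Lipschitz function of the initial continuous state. Shrinking $r_x$ so that $N_{r_x}(x_t)$ lies in the intersection of these finitely many local neighbourhoods forces $\tilde x$ to realise the same discrete sequence as $x^*$, with $|\tilde t_k-t_k^*|=O(\|x_t-x_s\|)$. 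Combining these sensitivity bounds with the previous cost estimate and letting $\epsilon\to 0$ yields $|V(t,q,x_t,L-j+1)-V(t,q,x_s,L-j+1)|\le K_x\|x_t-x_s\|$.

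For Lipschitz continuity in $t$ at fixed $x$, with $t<s$ in $(t_i,t_{i+1})$, I would bridge $(t,q,x)$ to $(s,q,x)$ using any fixed admissible control $\bar u$ on $[t,s]$, producing $\bar x(s)$ with $\|\bar x(s)-x\|\le C_f(s-t)$, where $C_f=\sup_{x\in B_r,\,u\in U_q}\|f_q(x,u)\|<\infty$ by the Lipschitz bound on $f_q$ and the compactness of $U_q$. Concatenating an $\epsilon$-optimal input from $(s,q,\bar x(s))$ yields an admissible hybrid input from $(t,q,x)$ whose cost exceeds $V(s,q,x,L-j+1)$ by at most $K_l(1+r^{\gamma_l})(s-t)$ (the running cost of the bridge) plus $K_x\|\bar x(s)-x\|=O(s-t)$ (from the state Lipschitz estimate applied at time $s$). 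Reversing the roles of $t$ and $s$ gives the symmetric inequality, and the triangle inequality in $(t,x)$ then delivers \eqref{VisLipschitz}.
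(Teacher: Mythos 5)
Your proposal is correct in substance and relies on the same three ingredients as the paper's proof: a Gronwall estimate on each inter-switching segment with constants independent of the control, propagation of the state discrepancy through the jumps via the smoothness of $\xi_{\sigma}$, and the transversality condition \eqref{TransversalityOfTrajectoriesToManifolds} to bound the perturbation of each autonomous switching time by a constant multiple of the state perturbation (your implicit-function-theorem phrasing and the paper's direct first-order expansion with the lower bound on $\left|\nabla m^{T}f_{q}\right|$ are interchangeable here, and your $\epsilon$-optimal-control selection is equivalent to the paper's ``infimum of a family of Lipschitz functions with a common Lipschitz constant is Lipschitz'' step). Where you genuinely diverge is in the decomposition: you prove Lipschitz continuity in $x$ at fixed $t$ and in $t$ at fixed $x$ separately and then assemble \eqref{VisLipschitz} by the triangle inequality, whereas the paper obtains the joint bound in one pass by working with the Mayer-form extended state $\hat{x}=\left[z,x^{T}\right]^{T}$ of Remark \ref{rem:BHOCPandMHOCPrelationship}, so that the entire cost to go is $\hat{g}\left(\hat{x}\left(t_{f};t,\hat{x}_{t}\right)\right)$ and the time shift is absorbed through the semigroup property into a single Gronwall estimate on $\left\Vert \hat{x}\left(\cdot;t,\hat{x}_{t}\right)-\hat{x}\left(\cdot;s,\hat{x}_{s}\right)\right\Vert$. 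The paper's route buys two things: it avoids a separate running-cost bookkeeping for the bridge interval $\left[t,s\right]$, and it sidesteps the one soft spot in your time-Lipschitz step, namely that ``reversing the roles of $t$ and $s$'' is not literally available (one cannot bridge backward in time); the correct repair is to restrict a near-optimal control from the earlier time to $\left[s,t_{f}\right)$, run it from $\left(s,q,x\right)$, and invoke your state-Lipschitz comparison at time $s$ --- which is precisely what the paper's unified flow comparison does automatically. With that repair your argument closes, and the resulting constant has the same dependence (on $L_{f}$, the A2 constants, $t_{f}-t_{0}$, $r$, and the transversality lower bound) as in the paper.
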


\begin{proof}
See Appendix \ref{sec:AppendixVisLipschitz}.
\end{proof}

\begin{definition}
Let $M_{\left(i\right)}$ denote the set of all ${\left(t,x\right)}\in {\mathbb{R} \times \bigcup_{i=0}^{L} \mathbb{R}^{n_{q_i}}}$ for which the $i$'th derivatives of $V$ exist and are continuous.
\hfill $\square$
\end{definition}

Note that from Theorem \ref{theorem:VisLipschitz}, it is concluded that $M_{\left(0\right)} \supseteq \bigcup_{i=0}^{L} \left(t_i,t_{i+1}\right) \times \mathbb{R}^{n_{q_i}}$, i.e. the value function is at most discontinuous at the switching instants with non-zero switching costs and non-identity jump maps.

\begin{corollary}
\label{corollary:VisDifferentiable}
From Theorem \ref{theorem:VisLipschitz} and Rademacher's theorem (see e.g. \cite{FlemingRishel, JafarpourLewis, Federer}), the Lipschitz property of the value function implies the differentiability almost everywhere, and hence the set $M_{\left(1\right)}$ is dense in $\bigcup_{i=0}^{L} \left[t_i,t_{i+1}\right] \times \mathbb{R}^{n_{q_i}}$.
\hfill $\square$
\end{corollary}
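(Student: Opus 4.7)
The plan is to apply Theorem \ref{theorem:VisLipschitz} to obtain the local Lipschitz regularity of $V$, then invoke Rademacher's theorem to pass from Lipschitz to almost everywhere differentiable, and finally upgrade ``almost everywhere differentiable'' to ``dense set of points of continuous differentiability'' via a Lusin-type argument. I would organise the proof as follows.

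First, fix an index $i\in\{0,\dots,L\}$ and a radius $r<\infty$, and consider the open slab $\mathcal{S}_i^{r}:=(t_i,t_{i+1})\times B_r\subset\mathbb{R}\times\mathbb{R}^{n_{q_i}}$. Theorem \ref{theorem:VisLipschitz} furnishes a constant $K=K(i,r)<\infty$ such that the estimate \eqref{VisLipschitz} holds on $\mathcal{S}_i^{r}$, so the restriction of $V(\cdot,q_i,\cdot,L-i+1)$ to $\mathcal{S}_i^{r}$ is Lipschitz in the joint variable $(t,x)$ with respect to the Euclidean metric on $\mathbb{R}\times\mathbb{R}^{n_{q_i}}$.

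Second, I would apply Rademacher's theorem (as cited in the statement) to this Lipschitz function on the open set $\mathcal{S}_i^{r}$. This yields a set $D_i^{r}\subseteq\mathcal{S}_i^{r}$ of full $(n_{q_i}+1)$-dimensional Lebesgue measure on which $V$ is (totally) differentiable. Taking the countable union $D_i:=\bigcup_{r\in\mathbb{N}} D_i^{r}$ gives a subset of $(t_i,t_{i+1})\times\mathbb{R}^{n_{q_i}}$ of full Lebesgue measure on which the classical gradient $\nabla V$ exists, and hence a fortiori a dense subset. Finally, the union over $i$ is dense in $\bigcup_{i=0}^{L}(t_i,t_{i+1})\times\mathbb{R}^{n_{q_i}}$, whose closure is precisely $\bigcup_{i=0}^{L}[t_i,t_{i+1}]\times\mathbb{R}^{n_{q_i}}$.

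Third, to reach $M_{(1)}$ rather than merely the set of differentiability points, I would note that on $D_i$ the gradient $\nabla V$ is a bounded (by the Lipschitz constant) measurable function taking values in $\mathbb{R}^{n_{q_i}+1}$. By Lusin's theorem, for every $\varepsilon>0$ and every compact $K\subset\mathcal{S}_i^{r}$ there exists a closed set $F_\varepsilon\subset K$ with Lebesgue measure $|K\setminus F_\varepsilon|<\varepsilon$ on which $\nabla V$ is continuous. Intersecting with the Lebesgue density points of $F_\varepsilon$ and letting $\varepsilon\downarrow 0$ produces a subset of $D_i$ of full measure, hence dense, on which $\nabla V$ is continuous; this subset lies in $M_{(1)}$. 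Density in the closed slab $[t_i,t_{i+1}]\times\mathbb{R}^{n_{q_i}}$ follows by taking closures, which completes the argument.

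The main obstacle is precisely this last upgrade: Rademacher alone produces a set of full measure where the derivative exists, but the definition of $M_{(1)}$ also demands continuity of that derivative, and pointwise continuity at a point of $F_\varepsilon$ in the ambient space (as opposed to along $F_\varepsilon$) requires invoking the Lebesgue density theorem at density points of $F_\varepsilon$. Everything else reduces to bookkeeping over the finitely many strips $(t_i,t_{i+1})\times\mathbb{R}^{n_{q_i}}$ and the exhaustion $r\uparrow\infty$.
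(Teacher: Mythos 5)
Your first two steps coincide with the paper's entire argument: the paper offers no proof of this corollary beyond the one-line justification embedded in its statement, namely that the Lipschitz estimate of Theorem \ref{theorem:VisLipschitz} on each strip $(t_i,t_{i+1})\times\mathbb{R}^{n_{q_i}}$ together with Rademacher's theorem gives differentiability of $V$ on a set of full Lebesgue measure, which is a fortiori dense, and density in the closed strips follows by taking closures. Up to the exhaustion by balls $B_r$ (needed because the Lipschitz constant in Theorem \ref{theorem:VisLipschitz} is local), this is exactly what you do, and it is correct.

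Your third step, however, does not deliver what you want. Lusin's theorem gives a closed set $F_\varepsilon$ on which the \emph{restriction} $\nabla V|_{F_\varepsilon}$ is continuous, and passing to Lebesgue density points of $F_\varepsilon$ upgrades this only to \emph{approximate} continuity of $\nabla V$ at those points, i.e.\ continuity along a set of density one --- not continuity in the ambient space, which is what a literal reading of the definition of $M_{(1)}$ (``exist \emph{and are continuous}'') demands. A sequence in $D_i\setminus F_\varepsilon$ can converge to a density point of $F_\varepsilon$ while $\nabla V$ along it stays bounded away from the limiting value. In fact the set of genuine continuity points of an a.e.-defined bounded measurable gradient can be empty: already in one dimension, $f(x)=\int_0^x\mathbf{1}_A(s)\,ds$ with $A$ a measurable set such that both $A$ and its complement meet every interval in positive measure is Lipschitz with $f'=\mathbf{1}_A$ a.e., and this derivative is continuous at no point of its domain of existence. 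So no Lusin/density-type argument can close this gap; if $M_{(1)}$ is read literally, the gap lies in the corollary as stated (and in the paper's one-line justification) rather than only in your proof. What is actually used downstream --- Theorem \ref{theorem:HDP} needs the HJB equation to hold on a dense, full-measure set of points where $\partial V/\partial t$ and $\nabla V$ exist --- is already delivered by your first two steps, so the honest repair is to read $M_{(1)}$ as the set of differentiability points (dropping the continuity clause), not to attempt the Lusin upgrade.
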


%\section{Hybrid Dynamic Programming}
%\label{sec:HDP}

\begin{theorem} \textnormal{\textbf{\cite{PhDthesisAP}}}
\label{theorem:HDP}
Consider the hybrid system $\mathbb{H}$ and the HOCP \eqref{HOCP} together with the assumptions A0-A2 as above.
% At every Lebesgue instant $t\in\left[t_0,t_f\right)$ and Lebesgue state $\left(q,x\right) \in Q \times \mathbb{R}^n$ where the value function \eqref{ValueFunctionDefinition} is differentiable with respect to $x$, 
Then for all $\left(t,x_{q_i}\right)\in M_{\left(1\right)}$, $q_i\in Q$, the  Hamilton-Jacobi-Bellman (HJB) equation holds, i.e.
\begin{equation}
-\frac{\partial V}{\partial t}=\inf_{u_{q_{i}}}\left\{ l_{q_{i}}\left(x_{q_{i}},u_{q_{i}}\right)+\left\langle \nabla V,f_{q_{i}}\left(x_{q_{i}},u_{q_{i}}\right)\right\rangle \right\} ,\label{HJB}
\end{equation}
$a.e.\; t\in\left(t_{i},t_{i+1}\right)$, $0 \leq i \leq L$ subject to the terminal condition
\begin{equation}
V\left(t_{f},q_{L},x,0\right)=g\left(x_{q_L}\right),\label{V terminal}
\end{equation}
and at the switching times $t_j \in \tau_L = \left\{t_1, \cdots, t_L\right\}$ subject to the boundary conditions
\begin{equation}
V\!\left(t_{j},q,x,L\!-\! j\!+\!1\!\right)\!=\!\!\min_{\!\sigma_{j}\in\Sigma_{j}\!}\!\!\left\{ \! V\!\left( t_{j},\Gamma\!\left(q\!,\!\sigma_{\! j}\right)\!,\xi_{\!\sigma_{j}}\!\left(x\right)\!,L\!-\! j\right)\!+\! c_{\!\sigma_{j}}\!\left(x\right)\!\right\} \!,
\label{VminWRTtjSigma}
\end{equation}
and
\vspace{-3mm}
\begin{multline}
\hspace{-7pt} l_{q}\left(x,u^{o}\left(t_{j}-,x\right)\right)+\left\langle \nabla V,f_{q}\left(x,u^o\left(t_{j}-,x\right)\right)\right\rangle 
\\
\hspace{-5pt}\equiv\!\frac{\!-\partial}{\partial t}V\left(t_{j}\!-,q,x,L\!-\! j\!+\!1\right)\!=\!\frac{\!-\partial}{\partial t}V\left(t_{j},\Gamma\left(q,\sigma_{j}\right),\xi_{\sigma_{j}}\left(x\right)\!,L\!-\! j\right)
\\
\equiv l_{_{\Gamma\left(q,\sigma_{j}\right)}}\!\!\!\left(\xi_{\sigma_{j}}^{\left(x\right)}\!,u^{o}\!\!\left(t_{j},\xi_{\sigma_{j}}^{\left(x\right)}\!\right)\right)\!+\!\left\langle \!\nabla V,f_{_{\Gamma\left(q,\sigma_{j}\right)}}\!\!\!\left(\xi_{\sigma_{j}}^{\left(x\right)}\!,u^{o}\!\!\left(t_{j},\xi_{\sigma_{j}}^{\left(x\right)}\!\right)\!\right)\!\!\right\rangle , \hspace{-10pt}
\label{HamiltonianContinuityForV}
\end{multline}
where if $\left(t_{j},x\right)\in\mathbb{R}\times\mathbb{R}^{n_{q_{j-1}}}$ belong to a controlled switching set then $\Sigma_{j} = \Sigma$ subject to the automaton constraint
%$q_j \in A \left(q_{j-1},\sigma_j\right)$
that $\Gamma\left(q, \sigma_j\right)$ is defined; and in the case of an autonomous switching, the set $\Sigma_{j}$ is reduced to a subset of discrete inputs which are consistent with the switching manifold condition $m_{q,{\Gamma\left(q, \sigma_j\right)}}\left(x_q\right)=0$. In the above equation, the notation $u^o\left(t,x\right)$ indicates the optimal [continuous valued] input corresponding to $x$ at the instant $t$.
\hfill $\square$
\end{theorem}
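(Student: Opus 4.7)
The plan is to derive all three ingredients — the HJB equation on each intra-switching segment, the terminal and switching boundary conditions, and the Hamiltonian continuity across each switching time — from a hybrid form of Bellman's Principle of Optimality, which I would establish first. Within any segment $(t_i, t_{i+1})$ no switching occurs, so for $(t,x)\in M_{(1)}$ with $t\in(t_i,t_{i+1})$ and for any sufficiently small $\Delta t>0$ and admissible $u$,
\begin{equation*}
V(t, q_i, x, L-i+1) \leq \int_t^{t+\Delta t} l_{q_i}(x_{q_i}(s), u(s))\, ds + V\bigl(t+\Delta t, q_i, x_{q_i}(t+\Delta t), L-i+1\bigr),
\end{equation*}
with equality for the optimizer. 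This follows by splitting the infimum in \eqref{ValueFunctionDefinition} into the integral over $[t,t+\Delta t]$ and the tail, using Theorem \ref{theorem:ExistenceUniqueness} to paste trajectories and Theorem \ref{theorem:VisLipschitz} to guarantee that the tail value function is well-behaved.

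Given this inequality and the assumption $(t,x)\in M_{(1)}$, I would Taylor-expand
\begin{equation*}
V\bigl(t+\Delta t,q_i,x_{q_i}(t+\Delta t),L-i+1\bigr)=V(t,q_i,x,L-i+1)+\bigl(\partial_t V+\langle\nabla V,f_{q_i}(x,u(t))\rangle\bigr)\Delta t+o(\Delta t),
\end{equation*}
divide by $\Delta t$, let $\Delta t\downarrow 0$, and take the infimum over $u\in U_{q_i}$ to obtain \eqref{HJB}. The terminal condition \eqref{V terminal} is immediate from the definition of the cost to go at $t_f$ with zero remaining switchings, since both the running-cost sum and the switching-cost sum in \eqref{CostToGoDefinition} are empty and only $g$ survives.

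The switching boundary condition \eqref{VminWRTtjSigma} will come from applying the Principle of Optimality across the switching instant itself: the cost to go just before the $j$-th switch is obtained by adding $c_{\sigma_j}(x)$ to the value function at the post-jump state $(\Gamma(q,\sigma_j),\xi_{\sigma_j}(x))$ and then minimizing over admissible $\sigma_j\in\Sigma_j$. The admissibility set $\Sigma_j$ is dictated by the automaton $A$ in the controlled case, and is additionally restricted to $\sigma_j$ satisfying $m_{q,\Gamma(q,\sigma_j)}(x)=0$ in the autonomous case, exactly as the statement requires.

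The main obstacle will be the Hamiltonian continuity across $t_j$, equation \eqref{HamiltonianContinuityForV}. My approach is to treat the switching time itself as a variational parameter: perturbing $t_j\mapsto t_j+\varepsilon$, extending the pre-switch flow to $t_j+\varepsilon$ and re-initializing the post-switch flow from the correspondingly displaced state produces, to first order, a cost variation equal to the difference of $-\partial_t V$ evaluated on the two sides of $t_j$. For a controlled switching, $t_j$ is a free parameter and optimality in $t_j$ forces this variation to vanish, from which \eqref{HamiltonianContinuityForV} follows by re-expressing each $\partial_t V$ via the HJB \eqref{HJB} just before and just after the switch. For an autonomous switching, $t_j$ is constrained by $m_{q,\Gamma(q,\sigma_j)}(x(t_j))=0$ and is not freely variable; one then introduces the scalar Lagrange multiplier $p$ (the same one appearing in \eqref{lambda boundary condition}) and uses the transversality condition \eqref{TransversalityOfTrajectoriesToManifolds} to absorb the manifold-normal contribution, again yielding \eqref{HamiltonianContinuityForV}. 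Throughout this last step one must be careful to work at points of differentiability of $V$, invoking Corollary \ref{corollary:VisDifferentiable} to justify taking limits through dense subsets of $M_{(1)}$ adjacent to the switching instants.
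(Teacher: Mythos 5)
You should first note a mismatch with the source: the paper states Theorem \ref{theorem:HDP} with a citation to the first author's thesis and gives \emph{no proof} of it in this document (only Theorem \ref{theorem:VisLipschitz}, the cost-sensitivity theorem, and Theorem \ref{theorem:HMPHDPrelation} are proved here). So there is no in-paper argument to compare yours against; what follows is an assessment of your sketch on its own terms.

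Your overall strategy --- hybrid Principle of Optimality on each inter-switching segment, Taylor expansion at points of $M_{(1)}$ for \eqref{HJB}, the definition of the cost to go for \eqref{V terminal}, and optimality across the switching instant for \eqref{VminWRTtjSigma} --- is the standard and correct route, and the technical supports you invoke (Theorem \ref{theorem:VisLipschitz}, Corollary \ref{corollary:VisDifferentiable}, existence/uniqueness for pasting trajectories) are the right ones. Two points need tightening. First, your HJB derivation as written only yields the inequality $-\partial_t V\leq\inf_{u}\{l_{q_i}+\langle\nabla V,f_{q_i}\rangle\}$ (valid for every admissible $u$, hence for the infimum); the reverse inequality requires running the dynamic programming inequality with an $\epsilon$-optimal control and letting $\epsilon\downarrow0$, which you should state explicitly since the infimum need not be attained. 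Second, and more substantively, your treatment of the Hamiltonian continuity \eqref{HamiltonianContinuityForV} in the autonomous case misplaces the role of the multiplier $p$: that multiplier belongs to the \emph{spatial} gradient boundary condition (compare \eqref{lambda boundary condition} and \eqref{GradJboundary}), not to the temporal one, and no multiplier is needed here. Because the switching manifold $m_{q,\Gamma(q,\sigma_j)}(x)=0$ is time-invariant, the boundary identity $V(t,q,x,L-j+1)=V(t,\Gamma(q,\sigma_j),\xi_{\sigma_j}(x),L-j)+c_{\sigma_j}(x)$ holds for \emph{all} $t$ with $x$ fixed on the manifold; differentiating it in $t$ and substituting \eqref{HJB} on each side gives \eqref{HamiltonianContinuityForV} directly. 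Your variational argument in $t_j$ is the right mechanism for the \emph{controlled} case, where the identity holds only with $\leq$ off the optimal switching locus and the first-order stationarity condition in the switching time supplies the equality of $-\partial_t V$ across the switch. If you separate the two cases along these lines, the sketch becomes a complete and correct argument.
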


%\begin{proof}
%See Appendix \ref{sec:ProofHDP}.
%\end{proof}

\begin{definition}
\label{def:InputWithAdmissibleSetOfDisc}
A feedback control input $I_{L-j+1}^{\left[t,t_{f}\right]} \left(t,q,x\right)=\left(S_{L-j+1},u_{q\left(\tau\right)}\left(\tau,x\right)\right)$, $\tau \in \left[t,t_{f}\right]$ is said to have an \textit{admissible set of discontinuities}, if for each $q\in Q$, the discontinuities of the continuous valued feedback control $u_q\left(t,x\right)$ and the discrete valued feedback input $\sigma\left(t,q,x\right)$ are located on lower dimensional manifolds in the time and state space $\mathbb{R} \times \mathbb{R}^{n_{q}}$.
\hfill $\square$
\end{definition}

We note that by A0 an autonomous discrete valued control input $\sigma$ necessarily satisfies the lower dimensional manifold switching set condition of Definition \ref{def:InputWithAdmissibleSetOfDisc} where the sets constitute $C^{\infty}$ submanifolds.

\begin{remark}
For classical (i.e. non-hybrid) systems, a more detailed definition of a feedback control law with an admissible set of discontinuities can be found in \cite[pp.~90--97]{FlemingRishel}. 
The necessary conditions for the Lipschitz continuity of the optimal feedback control are discussed in \cite{KolokoltsovLiYang, GalbraithVinter}, and sufficient conditions for continuity with respect to initial conditions are given in \cite{NgYowThowPEC}.
\end{remark}

%%%%%%%%%%%%%%%%%%%%%%%%%%%%%%%%%%%%%%%%%%%%%%%%%%%%%%%%%%%%%%%%%%%%%%%%%%%%%%%%

%%%%%%%%%%%%%%%%%%%%%%%%%%%%%%%%%%%%%%%%%%%%%%%%%%%%%%%%%%%%%%%%%%%%%%%%%%%%%%%%

\section{The Relationship Between the Hybrid Minimum Principle and Hybrid Dynamic Programming}
\label{sec:HMPvsHDP}

\begin{theorem} \textnormal{\textbf{Evolution of the Cost Sensitivity along a General Trajectory:}}
Consider the hybrid system $\mathbb{H}$ together with the assumptions A0-A2 and the hybrid cost to go \eqref{CostToGoDefinition}. Then for a given hybrid feedback control $I_{L-j+1}^{\left[t,t_{f}\right]} \left(t,q,x\right)$ with an admissible set of discontinuities, the sensitivity function $\nabla J \equiv \frac{\partial}{\partial x} J\left(t,t_{f},q,x,L-j+1;I_{L-j+1}\right)$ satisfies:
\vspace{-1mm}
\begin{equation}
\frac{d}{dt}\nabla J=-\left( \left[ \frac{\partial f_{q}\left(x,u\right)}{\partial x} \right]^T \nabla J +\frac{\partial l_{q}\left(x,u\right)}{\partial x}\right),\label{GradJdynamics}
\end{equation}
subject to the terminal conditions:
\begin{equation}
\nabla J\left(t_{f},q_L,x,0; I_0\right)=\nabla g\left(x\right), \label{GradJterminal}
\end{equation}
and the boundary conditions:
\begin{multline}
\hspace{-6pt} \nabla J\left(t_{j}-,q_{j-1},x,L-j+1;I_{L-j+1}\right)  \hfill
\\
\equiv \nabla J\left(t_{j},q_{j-1},x,L-j+1;I_{L-j+1}\right) \hfill
\\
=\left.\!\nabla\xi_{\sigma_{j}}\right|_{x}^{T}\nabla J\left(t_{j}+,q_{j},\xi_{\sigma_{j}}\left(x\right),L-j;I_{L-j}\right)+p\left.\!\nabla m\right|_{x}+\left.\!\nabla c\right|_{x}, \hfill
\label{GradJboundary}
\end{multline}
with $p=0$ when $\left(t_{j},x\right)\in\mathbb{R}\times\mathbb{R}^{n_{q_{j-1}}}$ belong to a controlled switching set, and
\vspace{-1mm}
\begin{equation}
p=\frac{\left[\nabla J\left(t_{j}+,q_{j},\xi_{\sigma_{j}}\left(x\right),L-j;I_{L-j}\right)\right]^{T}f_{q_{j},\xi}^{\xi,q_{j-1}}+l_{q_{j},\xi}^{q_{j-1}}}{\nabla m^{T}f_{q_{j-1}}\left(x,u\left(t_{j}-\right)\right)} ,
\label{Pequation}
\end{equation}
when $\left(t_{j},x\right)\in\mathbb{R}\times\mathbb{R}^{n_{q_{j-1}}}$ belong to an autonomous switching set, and where in the above equation $f_{q_{j},\xi}^{\xi,q_{j-1}}:\!=\! f_{q_{j}}\!\left(\xi_{\sigma_{j}}\!\!\left(x_{q_{j\!-\!1}}\!\!\left(t_{j}\!-\right)\right),u_{q_{j}}\!\left(t_{j}\right)\right)-\nabla\xi f_{q_{j\!-\!1}}\!\left(x_{q_{j\!-\!1}}\!\!\left(t_{j}\!-\right),u_{q_{j\!-\!1}}\!\!\left(t_{j}\!-\right)\right)$ and $l_{q_{j},\xi}^{q_{j\!-\!1}}\!:\!=\! l_{q_{j}}\!\left(\!\xi_{\sigma_{j}}\!\!\left(\! x_{q_{j\!-\!1}}\!\right),u_{q_{j}}\!\!\left(t_{j}\right)\right)-l_{q_{j\!-\!1}}\!\left(\! x_{q_{j\!-\!1}}\!,u_{q_{j\!-\!1}}\!\left(t_{j}\!-\right)\right)$.
\hfill $\square$
\end{theorem}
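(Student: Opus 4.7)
The plan is to obtain the three parts of the claim via a first-order perturbation analysis of the cost-to-go \eqref{CostToGoDefinition}. With the feedback input $I_{L-j+1}$ and its admissible set of discontinuities held fixed, a perturbation $\delta x$ of the initial state at time $t$ propagates within each mode $q_i$ according to the variational equation $\dot{\Phi}(s,t)=\partial_{x} f_{q_i}(x(s),u(s))\Phi(s,t)$, $\Phi(t,t)=I$, which is well defined on every continuity interval $(t_i,t_{i+1})$ by the $C^1$ regularity in A0. Applying the semigroup identity $J(t,q_i,x)=\int_t^{t+h}l_{q_i}(x(s),u(s))\,ds+J(t+h,q_i,x(t+h))$ and taking $\partial/\partial x$ produces the integral representation $\nabla J(t,x)=\int_t^{t+h}\Phi(s,t)^T\partial_{x} l_{q_i}\,ds+\Phi(t+h,t)^T\nabla J(t+h,x(t+h))$; differentiating at $h=0$ and using $\Phi(t+h,t)=I+\partial_{x} f_{q_i}\,h+O(h^2)$ yields \eqref{GradJdynamics}. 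The terminal condition \eqref{GradJterminal} is immediate from $J(t_f,q_L,x,0;I_0)=g(x)$. A useful by-product, obtained from $dJ(s,x(s))/ds=-l_{q_i}$ along the reference trajectory and the chain rule, is the non-minimized HJB identity $-\partial J/\partial t = l_{q_i}(x,u)+\nabla J^T f_{q_i}(x,u)$, which will be needed at the switching boundaries.

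For a controlled switching the time $t_j$ does not depend on $x$, so the decomposition $J(t_j-,q_{j-1},x)=c_{\sigma_j}(x)+J(t_j,q_j,\xi_{\sigma_j}(x))$ is an identity in $x$, and the chain rule immediately yields \eqref{GradJboundary} with $p=0$. The autonomous case is the main obstacle: here $t_j=t_j(x)$ is implicitly determined by $m_{q_{j-1},q_j}(x(t_j;t,x))=0$, so transversality \eqref{TransversalityOfTrajectoriesToManifolds} together with the implicit function theorem gives the first-order shift $\delta t_j=-\nabla m^T \delta x(t_j)/(\nabla m^T f_{q_{j-1}})$, where $\delta x(t_j)=\Phi(t_j,t)\delta x$ is the propagated state variation at the reference switching time.

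To conclude, I would expand the perturbed cost to first order in $\delta x$: the pre-switch integral contributes $\int_t^{t_j}\partial_{x} l_{q_{j-1}}^T\delta x(s)\,ds+l_{q_{j-1}}(t_j-)\,\delta t_j$; the switching cost contributes $\nabla c^T(\delta x(t_j)+f_{q_{j-1}}\delta t_j)$; and the post-switch cost contributes $\partial_t J(t_j+)\,\delta t_j+\nabla J(t_j+)^T\nabla\xi(\delta x(t_j)+f_{q_{j-1}}\delta t_j)$, where $\partial_t J(t_j+)$ is eliminated by the non-minimized HJB identity evaluated just after the jump. Substituting the formula for $\delta t_j$ and isolating the coefficient of $\delta x(t_j)$ rearranges the boundary terms into the form $\nabla\xi^T\nabla J(t_j+)+\nabla c+p\,\nabla m$, with the scalar multiplier of $\nabla m$ reducing precisely to \eqref{Pequation}. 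Letting $t\uparrow t_j$ so that $\Phi(t_j,t)\to I$ converts this into the stated boundary condition \eqref{GradJboundary}. The careful bookkeeping of the scalar factors accompanying $\nabla m$ at this last step, and in particular correctly separating the contributions that feed into $p$ from those that appear directly as $\nabla c$ and $\nabla\xi^T\nabla J(t_j+)$, is where the main technical effort lies.
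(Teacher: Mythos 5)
Your overall route is the same as the paper's: differentiate the cost-to-go decomposition mode by mode (backward induction over the switching intervals is implicit in your use of $\nabla J(t_j+,\cdot)$ on the later interval), obtain \eqref{GradJdynamics} from the variational equation, get the controlled-switching boundary condition by the chain rule, and handle the autonomous case by computing the first-order shift $\delta t_j$ of the switching time from transversality. Your two reformulations --- deriving the intra-mode ODE from the short-interval semigroup identity rather than by differentiating the full integral representation with the state-transition matrix $\Phi_{s,t}$, and eliminating $\partial_t J(t_j+)$ via the non-minimized HJB identity rather than tracking the $t_j$-dependence through the integration limits and through $x_\theta$ as the paper does --- are equivalent to the paper's computations and are unobjectionable.

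The gap is exactly at the step you defer as ``careful bookkeeping.'' In your expansion the switching cost is evaluated at the perturbed pre-jump state, so it contributes $\nabla c^{T}\bigl(\delta x(t_j)+f_{q_{j-1}}\,\delta t_j\bigr)$; substituting $\delta t_j=-\nabla m^{T}\delta x(t_j)/\bigl(\nabla m^{T}f_{q_{j-1}}\bigr)$ then leaves an additional term $-\bigl(\nabla c^{T}f_{q_{j-1}}\bigr)/\bigl(\nabla m^{T}f_{q_{j-1}}\bigr)$ in the coefficient of $\nabla m$, so your computation does \emph{not} ``reduce precisely to \eqref{Pequation}''; it produces \eqref{Pequation} plus that extra term. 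The paper's derivation reaches \eqref{Pequation} because it assigns the argument of $c$ only the variation $\delta x(t_j)$ (it evaluates $\partial x_{t_j-}/\partial x_{t_j-}=I$ in front of $\nabla c$), while assigning the argument of $\xi$ the full pre-jump variation $\delta x(t_j)+f_{q_{j-1}}\,\delta t_j$. You must either adopt that convention to land on the stated formula, or carry your own accounting through and confront the discrepancy; as written, the proposal asserts the conclusion at the one point where the two treatments diverge, and a quick consistency check with $f_{q_{j-1}}=f_{q_j}$, $\xi=\mathrm{id}$, $l=g=0$ and a nonconstant $c$ shows the two candidate values of $p$ give different totals for $\nabla J(t_j-)$, so the difference is not absorbed elsewhere. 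Two smaller omissions: you never state the inductive hypothesis that justifies the differentiability of $J(t_j+,\cdot)$ and the validity of \eqref{GradJdynamics} on the later interval, and you do not construct the locally modified control on $[t_j,t_j+\delta t)$ that the paper uses to make the perturbed trajectory, and hence the estimate \eqref{DeltaTvsDeltaX}, well defined.
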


%%%%%%%%%

\begin{proof}
We first prove that \eqref{GradJdynamics} holds for $t\in\left(t_{L},t_{L+1}\right]\equiv\left(t_{L},t_{f}\right]$ with the terminal condition \eqref{GradJterminal}. Then by assuming that  \eqref{GradJdynamics} holds for $t\in\left(t_{j},t_{j+1}\right]$, $j\leq L$ we show that it also holds for $t\in\left(t_{j-1},t_{j}\right]$ with the boundary condition \eqref{GradJboundary}, with $p=0$ when $t_{j}$ indicates the time of a controlled switching, and with $p$ given by \eqref{Pequation} when $t_{j}\in\tau_{L}$ indicates the time of an autonomous switching. Hence, by mathematical induction, the relation is proved for all $t\in\left[t_{0},t_{f}\right]$.

\textbf{(\textit{\textbf{i}}) No Switching Ahead:} First, consider a Lebesgue time $t\in\left[t_{L},t_{L+1}\right]\equiv\left[t_{L},t_{f}\right]$ and the hybrid trajectory passing through $\left(q_{L},x\right)$, and consider the cost to go \eqref{CostToGoDefinition} for $I_{0}$ which is
\begin{equation}
J\left(t,q_{L},x,0;I_{0}\right)=\int_{t}^{t_{f}}l_{q_{L}}\left(x_{s},u_{s}\right)ds+g\left(x_{f}\right) .
\end{equation}

\vspace{-2mm}
Since by Definition \ref{def:InputWithAdmissibleSetOfDisc} the discontinuities in $x$ of $I_{0}^{\left[t,t_{f}\right]} \equiv u^{\left[t,t_{f}\right]}$ lie on lower dimensional sets which are closed in the induced topology of the space, the partial derivative of $J$ with respect to $x$ exists in an open neighbourhood of $\left(t,x\right)$, and is derived as
\begin{multline}
\frac{\partial J\left(t,q_{L},x,0;I_{0}\right)}{\partial x}=\frac{\partial}{\partial x}\int_{t}^{t_{f}}l_{q_{L}}\left(x_{q_{L}}^{\left(s\right)},u_{q_{L}}^{\left(s\right)}\right)ds+\frac{\partial}{\partial x}g\left(x_{q_{L}}^{\left(t_{f}\right)}\right)
\\
=\int_{t}^{t_{f}}\frac{\partial}{\partial x}l_{q_{L}}\left(x_{s},u_{s}\right)ds+\frac{\partial}{\partial x}g\left(x_{f}\right),
\end{multline}
which is equivalent to
\begin{equation}
\frac{\!\partial J\left(t,q_{L},x,0;I_{0}\right)\!}{\partial x}\!=\!\!\int_{t}^{t_{f}}\!\!\left[\!\!\frac{\partial x_{s}}{\partial x}\!\!\right]^{T}\!\!\!\frac{\partial l_{q_{L}}{\left(x_{s},u_{s}\right)}}{\partial x_{s}}ds\!+\!\left[\!\!\frac{\partial x_{f}}{\partial x}\!\!\right]^{T}\!\!\!\frac{\partial g{\left(x_{f}\right)}}{\partial x_{f}} .
\label{gradJ}
\end{equation}

Taking $t=t_{f}$ the terminal condition for $\frac{\partial J}{\partial x}$ is seen to be determined by
\vspace{-1mm}
\begin{equation}
\frac{\partial J\left(t_{f},q_{L},x,0;I_{0}\right)}{\partial x}=\nabla_{x_{f}}g\left(x_{f}\right)\equiv\nabla g\left(x\right),
\end{equation}
because $x_{f}=x$ when $t=t_{f}$. Hence, \eqref{GradJterminal} is proved. With the notation $x_{s}=\phi_{q_{L}}\left(s,t,x\right)$
and with the smoothness provided by the assumptions A0-A2 for the given control input with an admissible set of discontinuities, we have
\vspace{-1mm}
\begin{multline}
\frac{d}{ds}\left(\frac{\partial}{\partial x}x_{s}\right)=\frac{d}{ds}\left(\frac{\partial}{\partial x}\phi_{q_{L}}\left(s,t,x\right)\right)
\\
=\frac{\partial}{\partial x}\left(\frac{d}{ds}\phi_{q_{L}}\left(s,t,x\right)\right)=\frac{\partial}{\partial x}\left(f_{q_{L}}\left(\phi_{q_{L}}\left(s,t,x\right),u\right)\right),
\end{multline}
from which we obtain
\vspace{-1mm}
\begin{equation}
\frac{d}{ds}\left(\frac{\partial x_{s}}{\partial x}\right)=\left[\frac{\partial f_{q_{L}}}{\partial x_{s}}\right]^{T}\frac{\partial\phi_{q_{L}}\left(s,t,x\right)}{\partial x},\label{GRADphiDynamics}
\end{equation}
with $\frac{\partial\phi_{q_{L}} \! \left(t,t,x\right)}{\partial x} \!= \! I_{n_{q_L} \!\! \times n_{q_L}}$, since $\phi_{q_{L}} \!\! \left(t,t,x\right) \!=\! x$. Let $\Phi_{s,t}^{q_{L}} \! \in \! \mathbb{R}^{n_{q_L} \!\! \times n_{q_L}}$ denote the solution of 
\begin{equation}
\dot{\Phi}_{s,t}^{q_{L}}=\nabla_{x_{s}}f_{q_{L}}\left(x_{s},u_{s}\right)^{T}\Phi_{s,t}^{q_{L}}\equiv\left[\frac{\partial f_{q_{L}}\left(x_{s},u_{s}\right)}{\partial x_{s}}\right]^{T}\Phi_{s,t}^{q_{L}}\,,\label{PhiDynamics}
\end{equation}
with $\Phi_{t,t}^{q_{L}}=I_{n_{q_{L}}\times n_{q_{L}}}$. By the uniqueness of the solutions to \eqref{GRADphiDynamics} and \eqref{PhiDynamics}:
\vspace{-1mm}
\begin{equation}
\frac{\partial}{\partial x}\phi_{q_{L}}\left(s,t,x\right)=\Phi_{s,t}^{q_{L}} \,,\label{StateTransitionGradientEquality}
\end{equation}
for all $x\in\mathbb{R}^{n_{q_{L}}}$. Also by the semi-group property:
\begin{equation}
x=\phi_{q_{L}}\left(s,t,x_{t}\right)=\phi_{q_{L}}\left(s,t,\phi_{q_{L}}\left(t,s,x\right)\right) ,
\end{equation}
and hence by taking the derivative with respect to $x$ we have
\begin{equation}
I_{n_{q_{L}}\times n_{q_{L}}}=\left.\frac{\partial\phi_{q_{L}}\left(s,t,z\right)}{\partial z}\right|_{z=\phi\left(t,s,x_{s}\right)}\frac{\partial\phi_{q_{L}}\left(t,s,x\right)}{\partial x} \,,
\end{equation}
which by \eqref{StateTransitionGradientEquality} is equivalent to
\vspace{-1mm}
\begin{equation}
I_{n_{q_{L}}\times n_{q_{L}}}=\Phi_{s,t}^{q_{L}}\Phi_{t,s}^{q_{L}} \, .\label{PhiInvPhi}
\vspace{-1mm}
\end{equation}

For all $r,s,t \in \left(t_L,t_f\right]$ it is the case that
\vspace{-1mm}
\begin{equation}
\frac{d}{ds}\Phi_{s,r}^{q_{L}}=\left[\frac{\partial f_{q_{L}}\left(x_{s},u_{s}\right)}{\partial x_{s}}\right]^{T}\Phi_{s,r}^{q_{L}} \,, \;\;\;\;\;\;\;\;\Phi_{r,r}^{q_{L}}=I_{n_{q_{L}}\times n_{q_{L}}} \,, \label{dotPhi_s,r}
\end{equation}
\vspace{-3mm}
\begin{multline}
\frac{d}{ds}\left(\Phi_{s,t}^{q_{L}}\Phi_{t,r}^{q_{L}}\right)=\left(\left[\frac{\partial f_{q_{L}}\left(x_{s},u_{s}\right)}{\partial x_{s}}\right]^{T}\Phi_{s,t}^{q_{L}}\right)\Phi_{t,r}^{q_{L}}
\\
=\left[\frac{\partial f_{q_{L}}\left(x_{s},u_{s}\right)}{\partial x_{s}}\right]^{T}\left(\Phi_{s,t}^{q_{L}}\Phi_{t,r}^{q_{L}}\right) ,\label{dotPhis,tPhit,r}
\vspace{-1mm}
\end{multline}
where for \eqref{dotPhis,tPhit,r} at $s=r$ the condition $\Phi_{r,t}^{q_{L}}\Phi_{t,r}^{q_{L}}=I_{n_{q_{L}}\times n_{q_{L}}}$ holds. Hence, from the uniqueness of the solution to the ODEs \eqref{dotPhi_s,r}
and \eqref{dotPhis,tPhit,r} we obtain $\Phi_{s,t}^{q_{L}}\Phi_{t,r}^{q_{L}}=\Phi_{s,r}^{q_{L}}$. Furthermore, \eqref{PhiInvPhi} gives
\vspace{-1mm}
\begin{equation}
0=\frac{d\Phi_{s,t}^{q_{L}}}{dt}\Phi_{t,s}^{q_{L}}+\Phi_{s,t}^{q_{L}}\frac{d\Phi_{t,s}^{q_{L}}}{dt} \,,
\vspace{-1mm}
\end{equation}
and hence
\vspace{-1mm}
\begin{multline}
\frac{d\Phi_{s,t}^{q_{L}}}{dt}=-\Phi_{s,t}^{q_{L}}\frac{d\Phi_{t,s}^{q_{L}}}{dt}\left[\Phi_{t,s}^{q_{L}}\right]^{-1}
\\
=-\Phi_{s,t}^{q_{L}}\left[\frac{\partial f_{q_{L}}\left(x_{t},u_{t}\right)}{\partial x_{t}}\right]^{T} \!\!\!\! \Phi_{t,s}^{q_{L}}\left[\Phi_{t,s}^{q_{L}}\right]^{-1}
=-\Phi_{s,t}^{q_{L}}\left[\frac{\partial f_{q_{L}}\left(x_{t},u_{t}\right)}{\partial x_{t}}\right]^{T} \!\!\!\! . \!\!\!\!\!\!
\vspace{-1mm}
\end{multline}

Differentiating \eqref{gradJ} with respect to $t$ along a trajectory
$\left(q_{L},x\right)$ gives
\vspace{-1mm}
\begin{multline}
\frac{d}{dt}\frac{\partial J\left(t,q_{L},x,0;I_{0}\right)}{\partial x}
\\
=\frac{d}{dt}\!\int_{t}^{t_{f}}\!\!\frac{\partial\phi_{q_{L}}^{\left(s,t,x\right)}}{\partial x}^{T}\!\!\!\!\!\left.\frac{\partial l_{q_{L}}^{\left(z,u_{s}\right)}}{\partial z}\right|_{z=\phi_{q_{L}}\!\!\left(s,t,x\right)}\hspace{-5mm}ds\!+\!\frac{d}{dt}\frac{\partial\phi_{q_{L}}^{\left(t_{f},t,x\right)}}{\partial x}^{T}\!\!\!\!\!\left.\frac{\partial g}{\partial z}\right|_{z=\phi_{q_{L}}\!\!\left(t_{f},t,x\right)}
\\
=-\left\{ \left[\frac{\partial\phi_{q_{L}}\left(s,t,x\right)}{\partial x}\right]^{T}\left.\frac{\partial l_{q_{L}}\left(z,u_{s}\right)}{\partial z}\right|_{z=\phi_{q_{L}}\left(s,t,x\right)}\right\}_{s=t}
\\
+\int_{t}^{t_{f}}\frac{d}{dt}\left\{ \left[\frac{\partial\phi_{q_{L}}\left(s,t,x\right)}{\partial x}\right]^{T}\left.\frac{\partial l_{q_{L}}\left(z,u_{s}\right)}{\partial z}\right|_{z=\phi_{q_{L}}\left(s,t,x\right)}\right\} ds
\\
+\frac{d}{dt}\!\left[\frac{\partial\phi_{q_{L}}\left(t_{f},t,x\right)}{\partial x}\right]^{T}\!\!\!\!\left.\frac{\partial g}{\partial z}\right|_{z=\phi_{q_{L}}\left(t_{f},t,x\right)}\hspace{-5mm}=\!-\!\left\{ \! I_{n\times n}.\frac{\partial l_{q_{L}}\left(x_{t},u_{t}\right)}{\partial x_{t}}\!\right\} 
\\
\! + \!\! \int_{t}^{t_{f}} \!\!\! \left\{ \!\! - \!\! \left[\frac{\partial f_{q_{L}}\left(x_{t},u_{t}\right)}{\partial x_{t}}\right]^{\!T} \!\!\! \left[\frac{\partial\phi_{q_{L}}\left(s,t,x\right)}{\partial x}\right]^{\!T} \!\!\!\!\! \left.\frac{\partial l_{q_{L}}\left(z,u_{s}\right)}{\partial z}\right|_{z=\phi_{q_{L}}\left(s,t,x\right)} \!\!\!\!\!\!\!\!\!\!\!\!\!\! + \! 0 \! \right\} \! ds
\\
+\left\{ \! - \! \left[\frac{\partial f_{q_{L}}\left(x_{t},u_{t}\right)}{\partial x_{t}}\right]^{T} \!\! \left[\frac{\partial\phi_{q_{L}}\left(t_{f},t,x\right)}{\partial x}\right]^{T} \!\!\!\! \left.\frac{\partial g\left(z\right)}{\partial z}\right|_{z=\phi_{q_{L}}\left(t_{f},t,x\right)} \!\!\!\!\! +0\right\} , \!\!\!\!\!\!
\vspace{-1mm}
\end{multline}
where the zero terms arise from
\vspace{-1mm}
\begin{align}
\frac{d}{dt}\left.\nabla_{z}l_{q_{L}}\left(z,u_{s}\right)\right|_{z=\phi_{q_{L}}\left(s,t,x\right)} &=\frac{d}{dt}\nabla_{x_{s}}l_{q_{L}}\left(x_{s},u_{s}\right)=0 ,
\\
\frac{d}{dt}\left.\nabla_{z}g\left(z,u_{s}\right)\right|_{z=\phi_{q_{L}}\left(t_{f},t,x\right)} &=\frac{d}{dt}\nabla_{x_{f}}g\left(x_{f}\right)=0.
\vspace{-1mm}
\end{align}

Hence,
\vspace{-1mm}
\begin{multline}
\frac{d}{dt}\frac{\partial J\left(t,q_{L},x,0;I_{0}\right)}{\partial x}=-\frac{\partial l_{q_{L}}\left(x_{t},u_{t}\right)}{\partial x_{t}} \hfill
\\
\hfill -\left[\!\frac{\partial f_{q_{L}}^{\left(x_{t},u_{t}\right)}}{\partial x_{t}}\!\right]^{T}\!\!\!\left\{ \!\!\int_{t}^{t_{f}}\!\left[\!\frac{\partial x_{s}}{\partial x}\!\right]^{T}\!\frac{\partial l_{q_{L}}^{\left(x_{s},u_{s}\right)}}{\partial x_{s}}ds+\left[\!\frac{\partial x_{f}}{\partial x}\!\right]^{T}\!\!\frac{\partial g\left(x_{f}\right)}{\partial x_{f}}\!\right\} , \hspace{-8pt}
\vspace{-1mm}
\end{multline}
which gives
\vspace{-1mm}
\begin{equation}
\frac{d}{dt}\frac{\partial J\left(t,q_{L},x,0;I_{0}\right)}{\partial x}=-\!\left[\frac{\partial f_{q_{L}}^{\left(x,u\right)}}{\partial x}\right]^{T}\!\!\frac{\partial J\left(t,q_{L},x,0;I_{0}\right)}{\partial x}-\frac{\partial l_{q_{L}}^{\left(x,u\right)}}{\partial x}\,,
\vspace{-1mm}
\end{equation}
with 
\vspace{-1mm}
\begin{equation}
\frac{\partial J}{\partial x} \left(t_{f},q_{L},x,0;I_{0}^{\left[t_{f},t_{f}\right]}\right)=\nabla_{x_{f}}g\left(x_{f}\right)\equiv\nabla_{x}g\left(x\right) .
\end{equation}

\textbf{(\textit{\textbf{ii}}) A Controlled Switching Ahead:} 
Now assume that \eqref{GradJdynamics} holds for $\theta\in\left(t_{j},t_{j+1}\right]$, $j\leq L$ when $t_{j}\in\tau_{L}$ indicates a time of a controlled switching. Then for %$t\in\left(t_{j-1},t_{j}\right]$, and 
$t_{j-1} < t \leq t_j < \theta \leq t_{j+1}$
\vspace{-1mm}
\begin{multline}
J\left(t,q_{j\!-\!1},x,L\!-\!j\!+\!1;I_{L\!-\!j\!+\!1}^{\left[t,t_{f}\right]}\right)
=\int_{t}^{t_{j}}l_{q_{j-1}}^{\left(x_{s},u_{s}\right)}ds
+c_{\sigma_j}^{\left(x\left(t_{j}-\right)\right)} \hfill
\\
\hfill+\int_{t_{j}}^{\theta}l_{q_{j}}^{\left(x_{\omega},u_{\omega}\right)}d\omega+J\left(\theta,q_{j},x_{\theta},L\!-\! j;I_{L-j}^{\left[\theta,t_{f}\right]}\right),
\label{JatSwitching}
\vspace{-1mm}
\end{multline}
where
\vspace{-1mm}
\begin{equation}
x_{\theta} \! = \! \xi\left(x_{t}\!+\!\int_{t}^{t_{j}-} \!\!\! f_{q_{j-1}}\left(x_{s},u_{s}\right)ds\right) + \int_{t_{j}}^{\theta}f_{q_{j}}\left(x_{\omega},u_{\omega}\right)d\omega.
\label{xWithSwitching}
\vspace{-1mm}
\end{equation}

This gives
\vspace{-1mm}
\begin{multline}
\frac{\partial J {\left(t,q_{j-1},x,L-j+1;I_{L-j+1}\right)}}{\partial x}
=\frac{\partial}{\partial x}\int_{t}^{t_{j}}l_{q_{j-1}}^{\left(x_{s},u_{s}\right)}ds
\\
+\frac{\! \partial c_{\sigma_j}^{\left(x\left(t_{j}-\right)\right)} \! }{\partial x} +\frac{\partial}{ \! \partial x \!} \! \int_{t_{j}}^{\theta}l_{q_{j}}^{\left(x_{\omega},u_{\omega}\right)} d\omega
+\! \frac{\partial J\left( \!\! \theta,q_{j},x_{\theta},L-j;I_{L-j}\right)}{\partial x} , \!\!\!\!\!\!\!\!\!\!
\vspace{-3mm}
\end{multline}
which is equivalent to
\vspace{-3mm}
\begin{multline}
\frac{\partial J\left(t,q_{j-1},x,L-j+1;I_{L-j+1}\right)}{\partial x}
=\int_{t}^{t_{j}}\left[\frac{\partial x_{s}}{\partial x}\right]^{T}\frac{\partial l_{q_{j-1}}^{\left(x_{s},u_{s}\right)}}{\partial x_{s}}ds
\\
+\left[\frac{\partial x_{t_{j}-}}{\partial x}\right]^{T}\frac{\partial c\left(x_{t_{j}-}\right)}{\partial x_{t_{j}-}}
+\int_{t_{j}}^{\theta}\left[\frac{\partial x_{\omega}}{\partial x}\right]^{T}\frac{\partial l_{q_{j}}\left(x_{\omega},u_{\omega}\right)}{\partial x_{\omega}}d\omega
\\
\hfill +\left[\frac{\partial x_{\theta}}{\partial x}\right]^{T}\frac{\partial J\left(\theta,q_{j},x_{\theta},L-j;I_{L-j}\right)}{\partial x_{\theta}}\,,
\label{GradJControlled}
\vspace{-1mm}
\end{multline}
with
\vspace{-3mm}
\begin{multline}
\hspace{-8pt}\frac{\partial x_{\theta}}{\partial x}=\int_{t_{j}}^{\theta}\frac{\partial f_{q_{j}}\left(x_{\omega},u_{\omega}\right)}{\partial x}d\omega+\frac{\partial\xi\left(x_{t}+\int_{t}^{t_{j}}f_{q_{j-1}}\left(x_{s},u_{s}\right)ds\right)}{\partial x}
\\
=\int_{t_{j}}^{\theta}\frac{\partial f_{q_{j}}\left(x_{\omega},u_{\omega}\right)}{\partial x}d\omega+\frac{\partial\xi\left(x_{t_{j}-}\right)}{\partial x},
\vspace{-1mm}
\end{multline}
from which we obtain
\vspace{-1mm}
\begin{equation}
\! \!\!\frac{\partial x_{\theta}}{\partial x} \! = \! \!  \int_{t_{j}}^{\theta}\!\left[\!\frac{\partial x_{\omega}}{\partial x}\!\right]^{\!T} \! \frac{\partial f_{q_{j}}\left(x_{\omega},u_{\omega}\right)}{\partial x_{\omega}}d\omega\!+\!\left[\!\frac{\partial x_{t_{j}-}}{\partial x}\!\right]^{\!T}\!\! \frac{\partial\xi\left(x_{t_{j}-}\right)}{\partial x_{t_{j}-}}\,.
\label{xBeforeAfterControlledSwitching}
\vspace{-1mm}
\end{equation}

In particular, for $x = x_t$ as $t\uparrow t_{j}$ and for $x_{\theta}$ as $\theta\downarrow t_{j}$ equation \eqref{GradJControlled} becomes
\vspace{-1mm}
\begin{multline}
\!\!\!\!\frac{\!\partial J\left(\! t_{j}\!-,q_{j\!-\!1},x_{t_{j}\!-},L\!-\! j\!+\!1;I_{L\!-\! j\!+\!1}\!\right)\!}{\partial x_{t_{j}-}}\!=\!\int_{t_{j}\!-}^{t_{j}}\!\left[\!\!\frac{\partial x_{s}}{\partial x}\!\!\right]^{T}\!\!\frac{\partial l_{q_{j\!-\!1}}{\left(x_{s},u_{s}\right)}\!\!}{\partial x_{s}}ds
\vspace{-1mm}
\\
+\left[\frac{\partial x_{t_{j}-}}{\partial x_{t_{j}-}}\right]^{T}\frac{\partial c\left(x_{t_{j}-}\right)}{\partial x_{t_{j}-}}
+\int_{t_{j}}^{t_{j}+}\left[\frac{\partial x_{s}}{\partial x}\right]^{T}\frac{\partial l_{q_{j}}\left(x_{s},u_{s}\right)}{\partial x_{s}}ds
\vspace{-1mm}
\\
+\left[\frac{\partial x_{t_{j}+}}{\partial x_{t_{j}-}}\right]^{T}\frac{\partial J\left(t_{j}+,q_{j},x_{t_{j}+},L-j;I_{L-j}\right)}{\partial x_{t_{j}+}} \,,
\vspace{-2mm}
\end{multline}
which is equivalent to
\vspace{-2mm}
\begin{multline}
\frac{\partial J\left(t_{j}-,q_{j-1},x_{t_{j}-},L-j+1;I_{L-j+1}\right)}{\partial x_{t_{j}-}}=\frac{\partial c\left(x_{t_{j}-}\right)}{\partial x_{t_{j}-}}
\vspace{-2mm}
\\
+\left[\frac{\partial x_{t_{j}+}}{\partial x_{t_{j}-}}\right]^{T}\frac{\partial J\left(t_{j}+,q_{j},x_{t_{j}+},L-j;I_{L-j}\right)}{\partial x_{t_{j}+}} \,,
\vspace{-1mm}
\end{multline}
and also \eqref{xBeforeAfterControlledSwitching} turns into
\vspace{-1mm}
\begin{equation}
\frac{\partial x_{t_{j}+}}{\partial x_{t_{j}-}}= \! \int_{t_{j}}^{t_{j}+}\left[\frac{\partial x_{\omega}}{\partial x}\right]^{T} \!\! \frac{\partial f_{q_{j}}\left(x_{\omega},u_{\omega}\right)}{\partial x_{\omega}}d\omega+\left[\frac{\partial x_{t_{j}-}}{\partial x_{t_{j}-}}\right]^{T} \!\! \frac{\partial\xi\left(x_{t_{j}-}\right)}{\partial x_{t_{j}-}} \,,
\vspace{-1mm}
\end{equation}
which gives
\vspace{-3mm}
\begin{equation}
\frac{\partial x_{t_{j}+}}{\partial x_{t_{j}-}}=\frac{\partial\xi\left(x_{t_{j}-}\right)}{\partial x_{t_{j}-}}=\left.\nabla\xi\right|_{x_{t_{j}-}}.
\vspace{-1mm}
\end{equation}

Hence, 
\vspace{-1mm}
\begin{multline}
\frac{\partial J\left(t_{j}-,q_{j-1},x_{t_{j}-},L-j+1;I_{L-j+1}^{\left[t_{j},t_{f}\right]}\right)}{\partial x_{t_{j}-}}
\\[-3mm]
=\frac{\partial c\left(x_{t_{j}-}\right)}{\partial x_{t_{j}-}}
+\left.\nabla\xi\right|_{x_{t_{j}-}}^{T}\frac{\partial J\left(t_{j}+,q_{j},x_{t_{j}+},L-j;I_{L-j}^{\left[t_{j}+,t_{f}\right]}\right)}{\partial x_{t_{j}+}} .
\vspace{-1mm}
\end{multline}
and therefore, \eqref{GradJboundary} is shown to hold with $p = 0$ for the controlled switching case. Writing 
\begin{multline}
J\left(t,q_{j-1},x,0;I_{L-j+1}\right)=\int_{t}^{t_{j}}l_{q_{j-1}}\left(x_{s},u_{s}\right)ds
\\
+J\left(t,q_{j-1},x\left(t_j-\right),L-j+1;I_{L-j+1}\right) \,,
\label{JBreakingDown}
\end{multline}
and following a similar procedure as in part $\left(i\right)$ of the proof, equation \eqref{GradJdynamics} is derived for $t\in\left(t_{j-1},t_{j}\right]$.
%\newline

\textbf{(\textit{\textbf{iii}}) An Autonomous Switching Ahead:} Now assume that \eqref{GradJdynamics} holds for all $\theta\in\left(t_{j},t_{j+1}\right]$, $j\leq L$ when $t_{j}\in\tau_{L}$ indicates a time of an autonomous switching. Then taking the derivative of both sides of the equality \eqref{JatSwitching} with respect to $x$ at $t\in\left(t_{j-1},t_{j}\right]$, with $t_{j-1} < t \leq t_j < \theta \leq t_{j+1}$, yields
\begin{multline}
\!\frac{\partial J\!\left(\!t,q_{j-1},x,L\!-\! j\!+\!1;I_{L-j+1}\!\right)\!}{\partial x}\!=\!\frac{\partial}{\partial x}\!\!\int_{t}^{t_{j}}\!\! l_{q_{j-1}}^{\left(x_{s},u_{s}\right)}\! ds\!+\!\frac{\!\partial c\left(x\left(t_{j}\!-\!\right)\right)\!}{\partial x}
\\
+\frac{\partial}{\partial x}\int_{t_{j}}^{\theta}l_{q_{j}}\left(x_{\omega},u_{\omega}\right)d\omega+\frac{\partial J\left(\theta,q_{j},x_{\theta},L-j;I_{L-j}\right)}{\partial x} \,,
\end{multline}
which gives
\vspace{-1mm}
\begin{multline}
\frac{\partial J\left(t,q_{j-1},x,L-j+1;I_{L-j+1}\right)}{\partial x}
=\frac{\partial t_{j}}{\partial x}\left.l_{q_{j-1}}\left(x_{s},u_{s}\right)\right|_{s=t_{j}-}
\\
+\int_{t}^{t_{j}}\left[\frac{\partial x_{s}}{\partial x}\right]^{T}\frac{\partial l_{q_{j-1}}\left(x_{s},u_{s}\right)}{\partial x_{s}}ds
+\left[\frac{\partial x_{t_{j}-}}{\partial x}\right]^{T}\frac{\partial c\left(x_{t_{j}-}\right)}{\partial x_{t_{j}-}}
\\
-\frac{\partial t_{j}}{\partial x}\left.l_{q_{j}}\left(x_{\omega},u_{\omega}\right)\right|_{\omega=t_{j}}
+\int_{t_{j}}^{\theta}\left[\frac{\partial x_{\omega}}{\partial x}\right]^{T}\frac{\partial l_{q_{j}}\left(x_{\omega},u_{\omega}\right)}{\partial x_{\omega}}d\omega
\\
+\left[\frac{\partial x_{\theta}}{\partial x}\right]^{T}\frac{\partial J\left(\theta,q_{j},x_{\theta},L-j;I_{L-j}\right)}{\partial x_{\theta}} \,,
\vspace{-1mm}
\label{SensitivityAutonomous}
\end{multline}
with the derivative of \eqref{xWithSwitching} derived as
\vspace{-1mm}
\begin{multline}
\frac{\partial x_{\theta}}{\partial x}=\frac{\partial}{\partial x}\xi\left(x_{t}+\int_{t}^{t_{j}}f_{q_{j-1}}^{\left(x_{s},u_{s}\right)}ds\right)-\frac{\partial t_{j}}{\partial x_{t_{j}-}}\left.f_{q_{j}}^{\left(x_{\omega},u_{\omega}\right)}\right|_{\omega=t_{j}}
\\
\!+\!\int_{t_{j}}^{\theta}\!\!\frac{\partial f_{q_{j}}^{\left(x_{\omega},u_{\omega}\right)}}{\partial x}d\omega=\left.\frac{\partial\xi\left(z\right)}{\partial z}\right|_{z\!=\! x_{t}\!+\!\int_{t}^{t_{j}}\!\!\! f_{q_{j-1}}^{\left(x_{s},u_{s}\right)}\! ds\!}^{T}\frac{\partial}{\partial x}\!\left(\! x_{t}\!+\!\int_{t}^{t_{j}}\!\!\!\!\! f_{q_{j-1}}^{\left(x_{s},u_{s}\right)}\! ds\!\right)
\\
-\frac{\partial t_{j}}{\partial x_{t_{j}-}}f_{q_{j}}\left(x_{t_{j}},u_{t_{j}}\right)+\int_{t_{j}}^{\theta}\frac{\partial f_{q_{j}}\left(x_{\omega},u_{\omega}\right)}{\partial x}d\omega \,,
\vspace{-1mm}
\end{multline}
which gives
\vspace{-1mm}
\begin{multline}
\frac{\partial x_{\theta}}{\partial x}=-\frac{\partial t_{j}}{\partial x_{t_{j}-}}f_{q_{j}}\left(x_{t_{j}},u_{t_{j}}\right)+\int_{t_{j}}^{\theta}\frac{\partial f_{q_{j}}\left(x_{\omega},u_{\omega}\right)}{\partial x}d\omega
\\
\!+\! \left.\nabla\xi\right|_{x_{t_{j}^{-}}}\!\!\!\left(\! I_{n\times n} \! +\! \frac{\partial t_{j}}{\partial x_{t_{j}-}}f_{q_{j-1}}^{\left(x_{t_{j}-},u_{t_{j}-}\right)} \!\! + \!\! \int_{t}^{t_{j}}\!\!\frac{\partial x_{s}}{\partial x}^{T}\!\! \frac{\partial f_{q_{j-1}}^{\left(x_{s},u_{s}\right)}}{\partial x_{s}}ds\!\right).
\label{xBeforeAfterAutonomous}
\end{multline}

\vspace{-1mm}
Note that in the above equations, the partial derivative $\frac{\partial t_{j}}{\partial x_{t_{j}-}}$ is not necessarily zero because for $\delta x_{t}\in\mathbb{R}^{n}$ the perturbed trajectory $x_{s}+\delta x_{s}$ arrives on the switching manifold $m$ at a different time $t_{j}^{\prime}-=\left(t_{j}+\delta t_{j}\right)-$, $\delta t_{j}\in\mathbb{R}$. Consider a locally modified control $I_{L-j+1}^{\prime}$ of the form
\vspace{-1mm}
\begin{equation}
I_{L-j+1}^{\prime}=\left(\left(t_{j}+\delta t,\sigma_{j}\right),u^{\prime}\right) ,
\vspace{-1mm}
\end{equation}
with
\vspace{-2mm}
\begin{equation}
u_{s}^{\prime}=\begin{cases}
u_{s} & s\in\left[t,t_{j}\right)\\
u\left(t_{j}-\right) & s\in\left[t_{j},t_{j}+\delta t\right)\\
u_{s} & s\in\left[t_{j}+\delta t,t_{j+1}\right)
\end{cases} ,
\vspace{-1mm}
\end{equation}
if $\delta t\geq0$ and
\vspace{-2mm}
\begin{equation}
u_{s}^{\prime}=\begin{cases}
u_{s} & s\in\left[t,t_{j}+\delta t\right)\\
u\left(t_{j}\right)\equiv u\left(t_{j}+\right) & s\in\left[t_{j}+\delta t,t_{j}\right)\\
u_{s} & s\in\left[t_{j},t_{j+1}\right)
\end{cases} ,
\vspace{-1mm}
\end{equation}
if $\delta t<0$. Since $I_{L-j+1}^{\prime} = I_{L-j+1}$ holds everywhere except only on $\left[t_{j},t_{j}+\delta t\right)$ (or $\left[t_{j}+\delta t,t_{j}\right)$ if $\delta t<0$), the measure of the set of modified controls is of the order $\left|\delta t \right|$. Evidently the perturbed trajectory arrives on the switching manifold when
\vspace{-1mm}
\begin{equation}
m\left(x_{t_{j}+\delta t_{j}-}+\delta x_{t_{j}+\delta t_{j}-}\right)=0.
\vspace{-1mm}
\end{equation}

For $\delta t\geq0$ we may write
\vspace{-1mm}
\begin{equation}
m\left(x_{t_{j}-}+\delta x_{t_{j}-}+\int_{t_{j}}^{t_{j}+\delta t} \hspace{-3mm} f_{q_{j-1}}\left(x_{s}+\delta x_{s},u_{t_{j}-}\right)\right)=m\left(x_{t_{j}-}\right)=0,
\vspace{-1mm}
\end{equation}
which results in
\vspace{-1mm}
\begin{equation}
\! \!\!\! \left[ \! \nabla m \left(x_{t_{j}-}\right)\!\right] ^{T} \!\! \left[\delta x_{t_{j}-}+f_{q_{j-1}}\left(x_{t_{j}-},u_{t_{j}-}\right) \delta t+O\left(\delta t^{2}\right)\right]=0,
\vspace{-1mm}
\end{equation}
or
\vspace{-1mm}
\begin{equation}
\delta t=\frac{-\nabla m^{T}\delta x_{t_{j}-}}{\nabla m^{T}f_{q_{j-1}}\left(x_{t_{j}-},u_{t_{j}-}\right)}+O\left(\delta t^{2}\right).
\label{DeltaTvsDeltaX}
\vspace{-1mm}
\end{equation}

Similarly, for $\delta t<0$ the same result is achieved. 
%Furthermore,
%\begin{multline} x_{t_{j}+\delta t_{j}-}+\delta x_{t_{j}+\delta t_{j}-}=x_{t}+\delta x_{t}+\int_{t_{j}}^{t_{j}+\delta t_{j}}f_{q_{j}}\left(x_{\omega}+\delta x_{\omega},u_{\omega}\right)d\omega        \\           +\xi\left(x_{t}+\delta x_{t}+\int_{t}^{t_{j}+\delta t_{j}}f_{q_{j-1}}\left(x_{s}+\delta x_{s},u_{s}\right)ds\right) \label{PerturbedTrajectoryEvoloution} \end{multline}
In particular, as $t\uparrow t_{j}$ and $\theta\downarrow t_{j}$
equation \eqref{SensitivityAutonomous} becomes
\vspace{-1mm}
\begin{multline}
\frac{\partial J\left(t_{j}-,q_{j-1},x_{t_{j}-},L-j+1;I_{L-j+1}\right)}{\partial x_{t_{j}-}}
=\frac{\partial t_{j}}{\partial x_{t_{j}-}}l_{q_{j-1}}^{\left(x_{t_{j}-},u_{t_{j}-}\right)}
\\
+\int_{t_{j}-}^{t_{j}}\left[\frac{\partial x_{s}}{\partial x}\right]^{T}\frac{\partial l_{q_{j-1}}^{\left(x_{s},u_{s}\right)}}{\partial x_{s}}ds
+\left[\frac{\partial x_{t_{j}-}}{\partial x_{t_{j}-}}\right]^{T}\frac{\partial c\left(x_{t_{j}-}\right)}{\partial x_{t_{j}-}}
\\
-\frac{\partial t_{j}}{\partial x_{t_{j}-}}l_{q_{j}}\left(x_{t_{j}},u_{t_{j}}\right)
+\int_{t_{j}}^{t_{j}+}\left[\frac{\partial x_{s}}{\partial x}\right]^{T}\frac{\partial l_{q_{j}}\left(x_{s},u_{s}\right)}{\partial x_{s}}ds
\\
+\left[\frac{\partial x_{t_{j}+}}{\partial x_{t_{j}-}}\right]^{T}\frac{\partial J\left(t_{j}+,q_{j},x_{t_{j}+},L-j;I_{L-j}\right)}{\partial x_{t_{j}+}} \,,
\vspace{-3mm}
\end{multline}
or
\vspace{-1mm}
\begin{multline}
 \! \! \! \! \! \! \! \! \! \frac{\!\partial J\!\left(\! t_{j}\!-,q_{j\!-\!1},x_{t_{j}\!-},L\!-\! j\!+\!1;I_{L\!-\! j\!+\!1}\!\right)}{\partial x_{t_{j}-}}\!=\!\frac{\partial t_{j}}{\!\partial x_{t_{j}-}\!}l_{q_{j-1}}^{\left(x_{t_{j}-}\!,u_{t_{j}-}\!\right)\!\!}\!+\!\frac{\partial c\left(x_{t_{j}\!-}\right)}{\partial x_{t_{j}-}}
\\
\!\!\!\!\! -\frac{\partial t_{j}}{\partial x_{t_{j}-}}l_{q_{j}}^{\left(x_{t_{j}},u_{t_{j}}\right)}
+\left[\frac{\partial x_{t_{j}+}}{\partial x_{t_{j}-}}\right]^{T}\frac{\partial J\left(t_{j}+,q_{j},x_{t_{j}+},L-j;I_{L-j}\right)}{\partial x_{t_{j}+}}, \! \! \! \! \! \! \!
\vspace{-1mm}
\end{multline}
and \eqref{xBeforeAfterAutonomous} turns into
\vspace{-1mm}
\begin{equation}
\frac{\partial x_{t_{j}+}}{\partial x_{t_{j}-}}=\left.\nabla\xi\right|_{x_{t_{j}-}}-\frac{\partial t_{j}}{\partial x_{t_{j}-}}\left(f_{q_{j}}^{\left(x_{t_{j}},u_{t_{j}}\right)}-\left.\nabla\xi\right|_{x_{t_{j}-}}f_{q_{j-1}}^{\left(x_{t_{j}-},u_{t_{j}-}\right)}\right).
\vspace{-1mm}
\end{equation}

Therefore, 
\vspace{-1mm}
\begin{multline}
\!\!\!\!\!\!\!\!\!\!
\frac{\partial J\left(t_{j}-,q_{j-1},x_{t_{j}-},L-j+1;I_{L-j+1}\right)}{\partial x_{t_{j}-}} = \hfill
\\
\!\frac{\!\!\!-\partial t_{j}\!}{\!\!\!\partial x_{t_{j}\!-\!}}\!\!\left(\!\!\!\!\left(l_{q_{j}}\!-\! l_{q_{j-1}}\!\right)\!\!+\!\!\left(\! f_{q_{j}}\!-\!\nabla\xi f_{q_{j-1}}\!\!\right)^{\! T}\!\!\frac{\partial J\left(t_{j}\!+,q_{j},x_{t_{j}\!+},L\!-\! j;I_{L\!-\! j}\right)}{\partial x_{t_{j}+}}\!\!\!\right)\!\!\!\!\!\!
\\
\hfill
+\frac{\partial c\left(x_{t_{j}-}\right)}{\partial x_{t_{j}-}}+\nabla\xi^{T}\frac{\partial J\left(t_{j}+,q_{j},x_{t_{j}+},L-j;I_{L-j}\right)}{\partial x_{t_{j}+}} .
\!\!\!\!\!\!\!
\vspace{-3mm}
\end{multline}

But in the limit as $\delta x_{t_{j}-}\in\mathbb{R}^{n}$ becomes sufficiently small, 
%i.e. $\left\Vert\delta x_{t_{j}-}\right\Vert = O\left(\delta t\right)$, Eq. 
\eqref{DeltaTvsDeltaX} gives
\vspace{-1mm}
\begin{equation}
\frac{\partial t_{j}}{\partial x_{t_{j}-}}=\frac{-\nabla m}{\nabla m^{T}f_{q_{j-1}}\left(x_{t_{j}-},u_{t_{j}-}\right)} \,,
\vspace{-3mm}
\end{equation}
and hence,
\vspace{-1mm}
\begin{multline}
\!\!\!\!\!\!\!\! \frac{ \!\! \partial J\!\left(\! t_{j}\!-,q_{j\!-\!1},x_{t_{j}\!-},L\!\!-\!\! j\!+\!1;I_{L\!-\! j\!\!+\!\!1}\!\right) \!\!\! }{\partial x_{t_{j}-}}\!=\!\nabla\xi^{\!T}\!\frac{ \!\! \partial J\!\left(t_{j}\!+,q_{j},x_{t_{j}\!+},L\!\!-\!\! j;I_{L\!-\! j}\!\right) \!\! }{\partial x_{t_{j}+}}
\\
+\frac{\!\left(l_{q_{j}}\!-\! l_{q_{j-1}}\right)\!+\!\left(f_{q_{j}}\!-\!\nabla\xi f_{q_{j-1}}\right){}^{\! T}\frac{\!\!\partial J\left(t_{j}\!+,q_{j},x_{t_{j}\!+},L\!-\! j;I_{L\!-\! j}\!\right)\!\!}{\partial x_{t_{j}+}}}{\nabla m^{T}f_{q_{j-1}}\left(x_{t_{j}-},u_{t_{j}-}\right)}\nabla m\!+\!\nabla c . \!\!\!\!\!\!\!\!\!
\vspace{-4mm}
\end{multline}

This proves \eqref{GradJboundary} with
\vspace{-3mm}
\begin{equation}
p = \frac{\!\left(l_{q_{j}}\!-\! l_{q_{j-1}}\right)\!+\!\left(f_{q_{j}}\!-\!\nabla\xi f_{q_{j-1}}\right)\!^{T}\underset{x_{t_{j}\!+}}{\nabla}\!\!\! J^{\!\!\left(t_{j}\!+,q_{j},x_{t_{j}\!+},L\!-\! j;I_{L\!-\! j}^{\left[t_{j},,t_{f}\right]}\!\right)}\!}{\nabla m^{T}f_{q_{j-1}}\left(x_{t_{j}-},u_{t_{j}-}\right)},
\end{equation}
which is the same equation for 
$p$ as in \eqref{Pequation}. 
Taking account of \eqref{JBreakingDown}, and following a similar procedure as in part $\left(i\right)$ of the proof, equation \eqref{GradJdynamics} is derived for $t\in\left(t_{j-1},t_{j}\right]$, and as shown above, it is subject to the terminal and boundary conditions \eqref{GradJterminal} and \eqref{GradJboundary} respectively. This completes the proof.
\end{proof}

\begin{theorem}
\label{theorem:HMPHDPrelation}
Consider the hybrid system $\mathbb{H}$ together with the assumptions A0-A2 and the HOCP \eqref{HOCP} for the hybrid cost \eqref{Hybrid Cost}. If there exists an optimal control input with admissible set of discontinuities, then along each optimal trajectory, the adjoint process $\lambda$ in the HMP and  the gradient of the value function $\nabla V$ in the corresponding HDP satisfy the same family of differential equations, almost everywhere, i.e.
\vspace{-1mm}
\begin{align}
\frac{d}{dt}\nabla V &=-\frac{\partial}{\partial x}f_{q^{o}}\left(x^{o},u^{o}\right)^T \nabla V-\frac{\partial}{\partial x}l_{q^{o}}\left(x^{o},u^{o}\right),\label{GradVdynamics}
\\
\frac{d}{dt}\lambda^{o} &=-\frac{\partial}{\partial x}f_{q^{o}}\left(x^{o},u^{o}\right)^T \lambda^{o}-\frac{\partial}{\partial x}l_{q^{o}}\left(x^{o},u^{o}\right),\label{AdjointDynamics}
\end{align}
and satisfy the same terminal and boundary conditions, i.e.
\vspace{-1mm}
\begin{multline}
\nabla V\left(t_{f},q^{o},x^o_{q_L}\left(t_{f}\right),0\right)=\nabla g\left(x_{q_L}^{o}\left(t_{f}\right)\right), \hfill \label{GradVterminal}
\end{multline}
\vspace{-7mm}
\begin{multline}
\nabla V\left(t_{j}-,q^o_{j-1},x^o_{q_{j-1}}\left(t_{j}-\right),L-j+1\right)\\
=\left.\nabla\xi\right|_{x^o_{q_{j-1}}\left(t_{j}-\right)}^{T}\nabla V\left(t_{j}+,q^o_{j},x^o_{q_{j}}\left(t_{j}+\right),L-j\right)
\\
+p\left.\nabla m\right|_{x^o_{q_{j-1}}\left(t_{j}-\right)}+\left.\nabla c\right|_{x^o_{q_{j-1}}\left(t_{j}-\right)},
\label{GradVboundary}
\vspace{-1mm}
\end{multline}
for the gradient of the value function, and
\vspace{-1mm}
\begin{align}
& \lambda^{o}\left(t_{f}\right) =\nabla g\left(x_{q_L}^{o}\left(t_{f}\right)\right), \hfill \label{AdjointTerminal}
\\
& \lambda^{o}\!\left(t_{j}\!-\right)\!=\!\left.\nabla\xi\right|_{\! x_{q_{j\!-\!1}}^{o}\!\left(t_{j}\!-\right)}^{T}\!\lambda^{o}\!\left(t_{j}\!+\right)\!+\! p\left.\nabla m\right|_{\! x_{q_{j\!-\!1}}^{o}\!\left(t_{j}\!-\right)}\!+\!\left.\nabla c\right|_{\! x_{q_{j\!-\!1}}^{o}\!\left(t_{j}\!-\right)},\label{AdjointBoundary}
\vspace{-1mm}
\end{align}
for the adjoint process. Hence, the adjoint process and the gradient of the value function are equal almost everywhere, i.e. 
\vspace{-1mm}
\begin{equation}
\lambda^{o}=\nabla_{x}V\label{lambda=grad(V)}
\vspace{-1mm}
\end{equation}
almost everywhere in the Lebesgue sense on $\bigcup\limits_{i=0}^{L} \left[t_i,t_{i+1}\right] \times \mathbb{R}^{n_{q_i}}$.
\vskip -1mm
\hfill $\square$
\end{theorem}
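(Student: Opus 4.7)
The plan is to derive Theorem \ref{theorem:HMPHDPrelation} as a specialization of the general cost-sensitivity result of the preceding theorem, combined with the HMP of Theorem \ref{theorem:HMP} and a standard uniqueness argument for linear ODEs. First I would invoke the Principle of Optimality together with Corollary \ref{corollary:VisDifferentiable}: on the dense set where $V$ is differentiable, one has $V(t,q,x,L-j+1)=J(t,t_f,q,x,L-j+1;I^{o}_{L-j+1})$ with $I^{o}_{L-j+1}$ the optimal hybrid input. Differentiating in $x$, first-order stationarity of $u^o$ kills the contribution of $\partial J/\partial u \cdot \partial u^{o}/\partial x$, so that $\nabla V = \nabla J$ evaluated along the optimal trajectory. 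This is the single place where the Hamiltonian minimization \eqref{HminWRTu} enters; the interior sensitivity dynamics themselves are control-independent, matching the ordering of steps advertised in the introduction.

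Second, I would simply specialize the preceding sensitivity theorem to $u=u^o$, $q=q^o$, $x=x^o$. The dynamics \eqref{GradJdynamics} then reproduce \eqref{GradVdynamics} verbatim, and these are identical to the adjoint ODE \eqref{AdjointDynamics}. The terminal condition \eqref{GradJterminal} coincides with \eqref{GradVterminal} and with \eqref{AdjointTerminal}. For the jump at a switching instant $t_j$, the controlled case in \eqref{GradJboundary} with $p=0$ matches both \eqref{GradVboundary} and \eqref{AdjointBoundary}; in the autonomous case, \eqref{GradJboundary} together with the explicit \eqref{Pequation} yields \eqref{GradVboundary}, whose structural form is again identical to \eqref{AdjointBoundary}. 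Proceeding by backward induction on $j$ from $t_f$, the two processes $\lambda^{o}$ and $\nabla V$ therefore satisfy the same linear ODE on each open segment $(t_i,t_{i+1})$ with the same value at $t_{i+1}-$, hence by the existence-and-uniqueness theorem for linear ODEs they agree almost everywhere on that segment, delivering \eqref{lambda=grad(V)}.

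The main obstacle I anticipate is the autonomous-switching step, where the HMP prescribes only that $p\in\mathbb{R}$ in \eqref{lambda boundary condition} subject to the Hamiltonian continuity condition \eqref{Hamiltonian jump}, whereas the sensitivity calculation furnishes the explicit expression \eqref{Pequation}. To close the loop I would show that, given the inductively established equality $\lambda^{o}(t_j+) = \nabla V(t_j+, q^o_j, x^o_{q_j}(t_j+), L-j)$, the continuity requirement \eqref{Hamiltonian jump} evaluated at $t_j$ forces the HMP multiplier $p$ to equal the ratio on the right-hand side of \eqref{Pequation}. Intuitively this is because \eqref{Hamiltonian jump} expresses precisely the fact that the total cost is insensitive to small perturbations of the switching instant, which is the same geometric content encoded in \eqref{DeltaTvsDeltaX} used to derive \eqref{Pequation}.

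A secondary technical matter is ensuring that the admissibility-of-discontinuities hypothesis on the optimal feedback $u^o$ is compatible with the regularity needed for Theorem 6.1 on every segment, so that the segmentwise a.e. equality extends to the a.e. statement on $\bigcup_{i=0}^{L}[t_i,t_{i+1}]\times\mathbb{R}^{n_{q_i}}$ as claimed. Under assumptions A0--A2 together with Definition \ref{def:InputWithAdmissibleSetOfDisc}, the discontinuity sets of $u^o$ have zero Lebesgue measure in time--state, so restricting to their complement and invoking Rademacher's theorem as in Corollary \ref{corollary:VisDifferentiable} suffices to conclude.
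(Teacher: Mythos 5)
Your proposal is correct and follows essentially the same route as the paper's own proof: the adjoint conditions are quoted from Theorem \ref{theorem:HMP}, the conditions on $\nabla V$ are obtained by specializing the cost-sensitivity theorem to the optimal feedback control (admissible by hypothesis), and \eqref{lambda=grad(V)} follows from uniqueness of solutions of the common linear ODE under common terminal and jump conditions. The two points you single out as needing extra work --- the envelope-type argument identifying $\nabla V$ with the sensitivity $\nabla J\left(\cdot\,;I^{o}_{L-j+1}\right)$ of the preceding theorem, and the verification that the multiplier $p$ fixed implicitly by the Hamiltonian continuity condition \eqref{Hamiltonian jump} agrees with the explicit ratio \eqref{Pequation} at autonomous switchings --- are exactly the steps the paper's one-paragraph proof leaves implicit, so your treatment is, if anything, the more complete one.
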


%\vspace{.1cm}

\begin{proof}
Equations \eqref{AdjointDynamics}, \eqref{AdjointTerminal} and \eqref{AdjointBoundary} are direct results of the Hybrid Minimum Principle in Theorem \ref{theorem:HMP}, and equations \eqref{GradVdynamics}, \eqref{GradVterminal} and \eqref{GradVboundary} hold for the optimal feedback control having an admissible set of discontinuities because equations \eqref{GradJdynamics}, \eqref{GradJterminal} and \eqref{GradJboundary} hold for all feedback controls with admissible sets of discontinuities, including $u^o$ corresponding to $x^o$. Hence, from Theorem \ref{theorem:ExistenceUniqueness} and the resulting uniqueness of the solutions of \eqref{GradVdynamics} and \eqref{AdjointDynamics} that are identical almost everywhere on $t\in\left[t_0,t_f\right]$, it is concluded that \eqref{lambda=grad(V)} holds almost everywhere in the Lebesgue sense on $\bigcup_{i=0}^{L} \left[t_i,t_{i+1}\right] \times \mathbb{R}^{n_{q_i}}$.
\end{proof}

%%%%%%%%%%%%%%%%%%%%%%%%%%%%%%%%%%%%%%%%%%%%%%%%%%%%%%%%%%%%%%%%%%%%%%%%%%%%%%%% 
%\vspace{-6mm}

%%%%%%%%%%%%%%%%%%%%%%%%%%%%%%%%%%%%%%%%%%%%%%%%%%%%%%%%%%%%%%%%%%%%%%%%%%%%%%%%

\section{Examples}
\label{sec:Examples}

\subsection*{Example 1} 
Consider a hybrid system with the indexed vector fields:
\vspace{-1mm}
\begin{align}
\dot{x} &=f_{1}\left(x,u\right)=x+x\, u,\label{Ex1f_1}
\\
\dot{x} &=f_{2}\left(x,u\right)=-x+x\, u,\label{Ex1f_2}
\vspace{-1mm}
\end{align}
and the hybrid optimal control problem
\vspace{-1mm}
\begin{multline}
J\left(t_{0},t_{f},h_{0},1;I_1\right)
=\int_{t_0}^{t_{s}}\frac{1}{2}u^{2}dt+\frac{1}{1+\left[x\left(t_{s}-\right)\right]^{2}}
\\
+\int_{t_{s}}^{t_{f}}\frac{1}{2}u^{2}dt+\frac{1}{2}\left[x\left(t_{f}\right)\right]^{2},
\label{Ex1TotalCost}
\vspace{-1mm}
\end{multline}
subject to the initial condition $h_{0}=\left(q\left(t_{0}\right),x\left(t_{0}\right)\right)=\left(q_1,x_{0}\right)$ provided at the initial time $t_{0}=0$. At the controlled switching instant $t_s$, the boundary condition for the continuous state is provided by the jump map $x\left(t_{s}\right) =\xi\left(x\left(t_{s}-\right)\right)=-x\left(t_{s}-\right)$.

\subsubsection*{The HMP Formulation and Results}
Writing down the Hybrid Minimum Principle results for the above HOCP, the Hamiltonians are formed as
\begin{align}
H_{q_1} &=\frac{1}{2}u^{2}+\lambda\, x\left(u+1\right),\label{Ex1H1}
\\
H_{q_2} &=\frac{1}{2}u^{2}+\lambda\, x\left(u-1\right),\label{Ex1H2}
\end{align}
from which the minimizing control input for both Hamiltonian functions is determined as
\begin{equation}
u^{o}=-\lambda x \,. \label{Ex1u^o}
\end{equation}

Therefore, the adjoint process dynamics, determined from \eqref{lambda dynamics} and with the replacement of the optimal control input from \eqref{Ex1u^o}, is written as
\begin{align}
\hspace{-2mm} \dot{\lambda}= \frac{-\partial H_{q_1}}{\partial x} = -\lambda\left(u^{o}+1\right)=\lambda\left(\lambda\, x-1\right), &{}  & t \in \left(t_0,t_s\right) , \label{Ex1Lambda1Dynamics}
\\
\hspace{-2mm} \dot{\lambda}=\frac{-\partial H_{q_2}}{\partial x} =-\lambda\left(u^{o}-1\right)=\lambda\left(\lambda\, x+1\right), &{}  & t \in \left(t_s,t_f\right) , \label{Ex1Lambda2Dynamics}
\end{align}
which are subject to the terminal and boundary conditions
\begin{align}
\lambda\left(t_{f}\right) &=\left.\nabla g\right|_{x\left(t_{f}\right)}=x\left(t_{f}\right),\label{Ex1Lambda(t_f)}
\\
\lambda\left(t_{s}-\right)\equiv\lambda\left(t_{s}\right) &=\left.\nabla\xi\right|_{x\left(t_{s}-\right)}\lambda\left(t_{s}+\right)+\left.\nabla c\right|_{x\left(t_{s}-\right)}
\notag\\
&=-\lambda\left(t_{s}+\right)+\frac{-2x\left(t_{s}-\right)}{\left(1+\left[x\left(t_{s}-\right)\right]^{2}\right)^{2}} \;.
\label{Ex1Lambda(t_s)}
\end{align}

The replacement of the optimal control input \eqref{Ex1u^o} in the continuous state dynamics \eqref{StateDynamics} gives
\begin{align}
\dot{x} &=\frac{\partial H_{q_1}}{\partial \lambda} =x\left(1+u^{o}\right)=-x\left(\lambda\, x-1\right), &{}  & t \in \left(t_0,t_s\right) ,\label{Ex1X1Dynamics}
\\
\dot{x} &=\frac{\partial H_{q_2}}{\partial \lambda}=x\left(-1+u^{o}\right)=-x\left(\lambda\, x+1\right), &{}  & t \in \left(t_s,t_f\right) ,\label{Ex1X2Dynamics}
\end{align}
which are subject to the initial and boundary conditions
\begin{align}
x\left(t_0\right) & =x\left(0\right) =x_{0},\label{Ex1X0}
\\
x\left(t_{s}\right) &=\xi\left(x\left(t_{s}-\right)\right)=-x\left(t_{s}-\right) .\label{Ex1Xs}
\end{align}

The Hamiltonian continuity condition \eqref{Hamiltonian jump} states that
\begin{multline}
H_{q_1}\left(t_{s}-\right)=\frac{1}{2}\left[u^{o}\left(t_{s}-\right)\right]^{2}+\lambda\left(t_{s}-\right)x\left(t_{s}-\right)\left[u^{o}\left(t_{s}-\right)+1\right]
\\
=\frac{1}{2}\left[-\lambda\left(t_{s}-\right)x\left(t_{s}-\right)\right]^{2}
+\lambda\left(t_{s}-\right)x\left(t_{s}-\right)\left[-\lambda\left(t_{s}-\right)x\left(t_{s}-\right)+1\right]
\\
= H_{q_2}\left(t_{s}+\right)
=\frac{1}{2}\left[u^{o}\left(t_{s}+\right)\right]^{2}+\lambda\left(t_{s}+\right)x\left(t_{s}+\right)\left[u^{o}\left(t_{s}+\right)-1\right]
\\
=\frac{1}{2}\left[-\lambda\left(t_{s}+\right)x\left(t_{s}+\right)\right]^{2}
+\lambda\left(t_{s}+\right)x\left(t_{s}+\right)\left[-\lambda\left(t_{s}+\right)x\left(t_{s}+\right)-1\right], \hspace{-3mm}
\end{multline}
which can be written, using \eqref{Ex1Xs}, as
\begin{equation}
x\left(t_{s^-}\right)\left[\lambda\left(t_{s^-}\right)-\lambda\left(t_{s^+}\right)\right]
=\frac{1}{2}\left[x\left(t_{s^-}\right)\right]^{2}\left[\left[\lambda\left(t_{s^-}\right)\right]^{2}-\left[\lambda\left(t_{s^+}\right)\right]^{2}\right] .
\label{Ex1Hcontinuity}
\end{equation}

The solution to the set of ODEs \eqref{Ex1Lambda1Dynamics}, \eqref{Ex1Lambda2Dynamics}, \eqref{Ex1X1Dynamics}, \eqref{Ex1X2Dynamics} together with the initial condition \eqref{Ex1X0} expressed at $t_0$, the terminal condition \eqref{Ex1Lambda(t_f)} determined at $t_f$ and the boundary conditions \eqref{Ex1Xs} and \eqref{Ex1Lambda(t_s)} provided at $t_s$ which is not a priori fixed but determined by the Hamiltonian continuity condition \eqref{Ex1Hcontinuity},  determine the optimal control input and its corresponding optimal trajectory that minimize the cost $J\left(t_{0},t_{f},h_{0},1;I_1\right)$ over $\bm{I_{1}}$, the family of hybrid inputs with one switching. Interested readers are referred to \cite{APPECIFAC2014} in which further steps are taken in order to reduce the above boundary value ODE problem into a set of algebraic equations using the special forms of the differential equations under study.

\subsubsection*{The HDP Formulation and Results}

Theorem \ref{theorem:HDP} states that the value function satisfies the HJB equation \eqref{HJB} almost everywhere. In particular,
\begin{multline}
\hspace{-9pt} -\frac{\partial V\left(t,q_{2},x,0\right)}{\partial t} =\inf_{u}H_{q_{2}}\left(x,\frac{\partial V}{\partial x},u\right) 
\\
=\inf_{u}\left\{ l_{q_{2}}\left(x,u\right)+\frac{\partial V}{\partial x}f_{q_{2}}\left(x,u\right)\right\}
=\inf_{u}\left\{ \frac{1}{2}u^{2}+\frac{\partial V}{\partial x}\left[-x+xu\right]\right\} 
\\
\hspace{12pt}=\left\{ \frac{1}{2}u^{2}+\frac{\partial V}{\partial x}\left[-x+xu\right]\right\}_{u=-x\frac{\partial V}{\partial x}}
=\frac{-1}{2}x^{2}\left(\frac{\partial V}{\partial x}\right)^{2}-x\frac{\partial V}{\partial x}, \hspace{-9pt}
\label{Ex1HJBforQ2}
\end{multline}
and similarly,
\begin{equation}
-\frac{\partial V\left(t,q_{1},x,1\right)}{\partial t}=\frac{-1}{2}x^{2}\left(\frac{\partial V}{\partial x}\right)^{2}+x\frac{\partial V}{\partial x} ,
\label{Ex1HJBforQ1}
\end{equation}
 with the boundary conditions
\begin{equation}
V\left(t_{f},q_2,x,0\right)=g\left(x\left(t_{f}\right)\right)=\frac{1}{2}x^{2},\label{Ex1V2Terminal}
\end{equation}
for $V\left(t,q_2,x,0\right)$, together with 
%\begin{equation}
%0 \! = \! \min_{t_{s}} \! \left\{ \!\!\! - \! V\left(t_{s},q_1,x,1\right)+ \!\!\!\!\!\!\!\!\! \min_{\sigma\in\left\{ \sigma_{q_1 q_2}\right\}} \!\!\!\!\!\!\!\!\!\! \left\{ V\left(t_{s},q_{2},\xi_{\sigma}\left(x\right),0\right)+c_{\sigma}\left(t_{s},x\right) \right\} \right\} \label{Ex1HDPboundary}
%\end{equation}
%the boundary condition
\begin{equation}
V\left(t_{s},q_1,x,1\right)=\min_{\sigma\in\left\{ \sigma_{q_1 q_2}\right\}}\left\{ V\left(t_{s},q_{2},-x,0\right)+\frac{1}{1+x^{2}} \right\}, \label{Ex1HDPboundary}
\end{equation}
and
\begin{equation}
\frac{-1}{2}x^{2}\left(\frac{\partial V_{q_1}}{\partial x}\right)^{2}+x\frac{\partial V_{q_1}}{\partial x} =\frac{-1}{2}\left(-x\right)^{2}\left(\frac{\partial V_{q_2}}{\partial x}\right)^{2}-\left(-x\right)\frac{\partial V_{q_2}}{\partial x} \; ,
\label{Ex1HDPboundary2}
\end{equation}
which determine $V\left(t,q_1,x,1\right)$ and $t_s$.
%parametrized by the variables $\alpha$, $\beta$ and $t_s$ determined implicitly by the equations \eqref{alpha=-beta}, \eqref{aplhaEquation1} and \eqref{aplhaEquation2} satisfy the HJB equation \eqref{HJB} and the terminal and boundary conditions \eqref{Ex1V2Terminal} and \eqref{ExampleVboundary} as well as the optimality condition for the switching command \eqref{Voptimality}.

\subsubsection*{The HMP - HDP Relationship}
In order to illustrate the results of Theorem \ref{theorem:HMPHDPrelation}, %that along an optimal trajectory $\nabla V$ satisfies the same differential equation and the same terminal and boundary conditions as $\lambda$, 
we first take the partial derivatives of \eqref{Ex1HJBforQ2} with respect to $x$ to write
\begin{equation}
\frac{\partial}{\partial x}\left(\frac{\partial V}{\partial t}-\frac{1}{2}x^{2}\left(\frac{\partial V}{\partial x}\right)^{2}-x\frac{\partial V}{\partial x}\right)=0,
\end{equation}
or
\begin{equation}
\frac{\partial^{2}V}{\partial x\partial t}-x\left(\frac{\partial V}{\partial x}\right)^{2}-x^{2}\frac{\partial V}{\partial x}\frac{\partial^{2}V}{\partial x^{2}}-\frac{\partial V}{\partial x}-x\frac{\partial^{2}V}{\partial x^{2}}=0 .
\end{equation}

It can easily be verified that the set of states with twice differentiability of  $V\left(t,q_2,x,0\right)$ is $M_{\left(2\right)} = \left(t_s,t_f\right) \times \left(\mathbb{R}-\left\{0\right\}\right)$ which is open dense in $\mathbb{R} \times \mathbb{R}$ and therefore, 
\begin{equation}
\frac{\partial^{2}V}{\partial t\partial x}-x^{2}\frac{\partial V}{\partial x}\frac{\partial^{2}V}{\partial x^{2}}-x\frac{\partial^{2}V}{\partial x^{2}}=x\left(\frac{\partial V}{\partial x}\right)^{2}+\frac{\partial V}{\partial x} .
\label{Ex1V2variations}
\end{equation}

But from the definition of the total derivative, we have
\begin{multline}
\frac{d}{dt}\left(\frac{\partial V}{\partial x}\right)=\frac{\partial^{2}V}{\partial t\partial x}+ \frac{\partial^{2}V}{\partial x^{2}}\,f_{q_{2}\left(x,u^o\right)}
\\
=\frac{\partial^{2}V}{\partial t\partial x}+\frac{\partial^{2}V}{\partial x^{2}}\left(-x^{2}\frac{\partial V}{\partial x}-x\right)
=\frac{\partial^{2}V}{\partial t\partial x}-x^{2}\frac{\partial V}{\partial x}\frac{\partial^{2}V}{\partial x^{2}}-x\frac{\partial^{2}V}{\partial x^{2}} .
\label{Ex1V2differential}
\end{multline}

Therefore, from \eqref{Ex1V2variations} and \eqref{Ex1V2differential}, the governing equation for $\nabla V\left(t,q_2,x,0\right)$ is derived as
\begin{equation}
\frac{d}{dt}\left(\frac{\partial V}{\partial x}\right)=x\left(\frac{\partial V}{\partial x}\right)^{2}+\frac{\partial V}{\partial x}=\frac{\partial V}{\partial x}\left(x\frac{\partial V}{\partial x}+1\right) ,
\label{Ex1NablaV2Dynamics}
\end{equation}
which is the same as the dynamics \eqref{Ex1Lambda2Dynamics} for $\lambda\left(t\right)$, $t\in \left(t_s,t_f\right)$.

Similarly, the differentiation of \eqref{Ex1HJBforQ1} results in
\begin{equation}
\frac{d}{dt}\left(\frac{\partial V}{\partial x}\right)=\frac{\partial V}{\partial x}\left(x\frac{\partial V}{\partial x}-1\right) ,
\label{Ex1NablaV1Dynamics}
\end{equation}
which is the same as the dynamics \eqref{Ex1Lambda1Dynamics} for $\lambda\left(t\right)$, $t\in \left(t_0,t_s\right)$.

The equality of the terminal conditions for $\nabla V\left(t_f,q_2,x,0\right)$ and $\lambda\left(t_f\right)$ becomes obvious by taking the gradient of \eqref{Ex1V2Terminal}, i.e. 
\begin{equation}
\frac{\partial V\left(t_{f},q_{2},x,0\right)}{\partial x}=\frac{\partial g\left(x\right)}{\partial x}=x,
\label{Ex1NablaV2Terminal}
\end{equation}
which is equivalent to \eqref{Ex1Lambda(t_f)}.

Moreover, the equality of the boundary conditions for $\nabla V\left(t_f,q_2,x,0\right)$ and $\lambda\left(t_f\right)$ can be illustrated by taking the gradient of \eqref{Ex1HDPboundary} and writing
\begin{equation}
\frac{\partial}{\partial x}V\left(t_s,q_{1},x,1\right)=\frac{\partial}{\partial x}\left(V\left(t_s,q_{2},-x,0\right)+\frac{1}{1+x^{2}}\right),
\end{equation}
that gives
\begin{equation}
\frac{\partial V\left(t_s,q_{1},x,1\right)}{\partial x}=\left.-\frac{\partial V\left(t_s,q_{2},y,0\right)}{\partial y}\right|_{y=-x}+\frac{-2x}{\left(1+x^{2}\right)^{2}},
\label{Ex1NablaVboundary}
\end{equation}
which is the same boundary condition as the boundary condition \eqref{Ex1Lambda(t_s)} for $\lambda$. Therefore, by the uniqueness of the results of the set of differential equations \eqref{Ex1NablaV2Dynamics} and \eqref{Ex1NablaV1Dynamics} for $\nabla V$ (or equivalently \eqref{Ex1Lambda2Dynamics} and \eqref{Ex1Lambda1Dynamics} for $\lambda$) with the terminal and boundary conditions \eqref{Ex1NablaV2Terminal} and \eqref{Ex1NablaVboundary} for $\nabla V$  (or equivalently \eqref{Ex1Lambda(t_f)} and \eqref{Ex1Lambda(t_s)} for $\lambda$), the gradient of the value function evaluated along every optimal trajectory is equal to the adjoint process corresponding to the same trajectory. Interested readers are referred to \cite{APPECIFAC2014} for further discussion on this example.

\hfill $\square$
\vspace{-6mm}

\subsection*{Example 2}

Consider the hybrid system with the indexed vector fields:
\begin{align}
\dot{x} &=\left[\begin{array}{c}
\dot{x}_{1}\\
\dot{x}_{2}
\end{array}\right]=f_{1}\left(x,u\right)=\left[\begin{array}{c}
x_{2}\\
-x_{1}+u
\end{array}\right],
\label{f1dynamics}
\\
\dot{x} &=\left[\begin{array}{c}
\dot{x}_{1}\\
\dot{x}_{2}
\end{array}\right]=f_{2}\left(x,u\right)=\left[\begin{array}{c}
x_{2}\\
u
\end{array}\right],
\label{f2dynamics}
\end{align}
where autonomous switchings occur on the switching manifold described
by
\vspace{-3mm}
\begin{equation}
m\left(x_{1}\left(t_{s}\right),x_{2}\left(t_{s}-\right)\right)\equiv x_{2}\left(t_{s}-\right)=0,
\label{Ex2SwitchingManifold}
\end{equation}
with the continuity of the trajectories at the switching instant. Consider the hybrid optimal control problem defined as the minimization
of the total cost functional
\begin{equation}
J=\int_{t_{0}}^{t_{f}}\frac{1}{2}u^{2}dt+\frac{1}{2}\left(x_{1}\left(t_{s}-\right)\right)^{2}+\frac{1}{2}\left(x_{2}\left(t_{f}\right)-v_{ref}\right)^{2}.
\end{equation}

\subsubsection*{The HMP Formulation and Results}

Employing the HMP, the corresponding Hamiltonians are defined as
\begin{align}
H_{1} &=\lambda_{1}x_{2}+\lambda_{2}\left(-x_{1}+u\right)+\frac{1}{2}u^{2},
\\
H_{2} &=\lambda_{1}x_{2}+\lambda_{2}u+\frac{1}{2}u^{2} .
\end{align}

The Hamiltonian minimization with respect to $u$ (Eq. \eqref{HminWRTu}) gives
\vspace{-3mm}
\begin{equation}
u^{o}=-\lambda_{2} ,
\end{equation}
for both $q=1$ and $q=2$.

Therefore the state dynamics \eqref{StateDynamics} and the adjoint process dynamics \eqref{lambda dynamics} become
\begin{align}
\dot{x}_{1} &=\frac{\partial H_1}{\partial\lambda_{1}}=x_{2},
\label{Ex2q1x1dynamics}
\\
\dot{x}_{2} &=\frac{\partial H_1}{\partial\lambda_{2}}=-x_{1}+u^{o}=-x_{1}-\lambda_{2},
\label{Ex2q1x2dynamics}
\\
\dot{\lambda}_{1} & =\frac{-\partial H_1}{\partial x_{1}} =\lambda_{2}, \label{Ex2q1lambda1dynamics}
\\
\dot{\lambda}_{2} & =\frac{-\partial H_1}{\partial x_{2}} =-\lambda_{1}, \label{Ex2q1lambda2dynamics}
\end{align}
for $q=1$, and
\begin{align}
\dot{x}_{1} &=\frac{\partial H_2}{\partial\lambda_{1}}=x_{2},
\label{Ex2q2x1dynamics}
\\
\dot{x}_{2} &=\frac{\partial H_2}{\partial\lambda_{2}}=u^{o}=-\lambda_{2},
\label{Ex2q2x2dynamics}
\\
\dot{\lambda}_{1} &=\frac{-\partial H_2}{\partial x_{1}}=0, \label{Ex2q2lambda1dynamics}
\\
\dot{\lambda}_{2} &=\frac{-\partial H_2}{\partial x_{2}}=-\lambda_{1}, \label{Ex2q2lambda2dynamics}
\end{align}
 for $q=2$. 
At the initial time $t=t_0$, the continuous valued states are specified by the initial conditions
\begin{align}
x_{1}\left(t_{0}\right) &=x_{10},
\label{Ex2x10}
\\
x_{2}\left(t_{0}\right) &=x_{20}.
\label{Ex2x20}
\end{align}

At the switching instant $t=t_s$, the boundary conditions for the states and  adjoint processes are determined as
\begin{align}
x_{1}\left(t_{s}\right) &=x_{1}\left(t_{s}-\right) \equiv \lim_{t\uparrow t_{s}}x_{1}\left(t\right),
\label{Ex2x1s}
\\
x_{2}\left(t_{s}\right) &=x_{2}\left(t_{s}-\right) = 0,
\label{Ex2x2s}
\\
\lambda_{1}\left(t_{s}\right) &=\lambda_{1}\left(t_{s}+\right)+\frac{\partial c}{\partial x_{1}}+p\frac{\partial m}{\partial x_{1}}= \lambda_{1}\left(t_{s}+\right)+ x_{1}\left(t_{s}\right), \label{Ex2lambda1boundary}
\\
\lambda_{2}\left(t_{s}\right) &=\lambda_{2}\left(t_{s}+\right)+\frac{\partial c}{\partial x_{2}}+p\frac{\partial m}{\partial x_{2}}=\lambda_{2}\left(t_{s}+\right)+p . \label{Ex2lambda2boundary}
\end{align}

And at the terminal time $t=t_f$, the adjoint processes are determined by \eqref{lambda final condition} as
\begin{align}
\lambda_{1}\left(t_{f}\right) &=\frac{\partial g}{\partial x_{1}}=0, \label{Ex2lambda1terminal}
\\
\lambda_{2}\left(t_{f}\right) &=\frac{\partial g}{\partial x_{2}}=x_{2}\left(t_{f}\right)-v_{ref} . \label{Ex2lambda2terminal}
\end{align}

Note that unlike $t_0$ and $t_f$ which are a priori determined, $t_s$ is not fixed and needs to be determined by the Hamiltonian continuity condition \eqref{Hamiltonian jump} as
\begin{multline}
H_{1}\left(t_{s}-\right)=\lambda_{1}\left(t_{s}-\right)x_{2}\left(t_{s}-\right)-\lambda_{2}\left(t_{s}-\right)x_{1}\left(t_{s}-\right)-\frac{1}{2}\lambda_{2}\left(t_{s}-\right)^{2}
\\
=-\lambda_{2}\left(t_{s}\right)x_{1}\left(t_{s}-\right)-\frac{1}{2}\lambda_{2}\left(t_{s}\right)^{2}
= H_{2}\left(t_{s}+\right)
\\
=\lambda_{1}\left(t_{s}+\right)x_{2}\left(t_{s}+\right)-\frac{1}{2}\lambda_{2}\left(t_{s}+\right)^{2}=-\frac{1}{2}\lambda_{2}\left(t_{s}+\right)^{2},
\end{multline}
i.e.
\begin{equation}
\lambda_{2}\left(t_{s}\right)x_{1}\left(t_{s}-\right)+\frac{1}{2}\lambda_{2}\left(t_{s}\right)^{2}=\frac{1}{2}\lambda_{2}\left(t_{s}+\right)^{2},
\end{equation}
that with the insertion of \eqref{Ex2lambda2boundary}, it becomes
\begin{equation}
\left(\lambda_{2}\left(t_{s}+\right)+p\right)x_{1}\left(t_{s}-\right)+\frac{1}{2}\left(\lambda_{2}\left(t_{s}+\right)+p\right)^{2}=\frac{1}{2}\lambda_{2}\left(t_{s}+\right)^{2} .
\label{Ex2Hcontinuity}
\end{equation}

The set of ODEs \eqref{Ex2q1x1dynamics} to \eqref{Ex2q2lambda2dynamics}, together with the initial conditions \eqref{Ex2x10} and \eqref{Ex2x20} expressed at $t_0$, the boundary conditions \eqref{Ex2x1s}, \eqref{Ex2x2s}, \eqref{Ex2lambda1boundary} and \eqref{Ex2lambda2boundary} provided at $t_s$, and the terminal conditions \eqref{Ex2lambda1terminal} and \eqref{Ex2lambda2terminal} determined at $t_f$, with the two unknowns $t_s$ and $p$ determined by the Hamiltonian continuity condition \eqref{Ex2Hcontinuity} and the switching manifold condition \eqref{Ex2SwitchingManifold}, form an ODE boundary value problem whose solution results in the determination of the optimal control input and its corresponding optimal trajectory that minimize the cost $J\left(t_{0},t_{f},h_{0},1;I_1\right)$ over $\bm{I_{1}}$, the family of hybrid inputs with one switching on the switching manifold  \eqref{Ex2SwitchingManifold}. Interested readers are referred to \cite{APPEC1ADHS2015} for further steps taken in order to reduce the above boundary value ODE problem into a set of algebraic equations using the special forms of the differential equations under study.

\subsubsection*{The HDP Formulation and Results}
For the linear differential equations \eqref{f1dynamics} and \eqref{f2dynamics}, the Hamiltonians for the HJB equation are formed as
\begin{align}
H_{1}\left(x,\nabla V,u\right)&=\frac{1}{2}u^{2}+\frac{\partial V}{\partial x_{1}}\cdot x_{2}+\frac{\partial V}{\partial x_{2}} \cdot \left(-x_{1}+u\right),
\\
H_{2}\left(x,\nabla V,u\right)&=\frac{1}{2}u^{2}+\frac{\partial V}{\partial x_{1}}\cdot x_{2}+\frac{\partial V}{\partial x_{2}}\cdot u,
\end{align}
which have a minimizing control input
\begin{equation}
u^{o}=-{\partial V}/{\partial x_{2}},
\end{equation}
and therefore, the HJB equations are expressed as
\begin{align}
-\frac{\partial V\left(t,q_{2},x,0\right)}{\partial t} &=\frac{-1}{2}\left(\frac{\partial V}{\partial x_{2}}\right)^{2}+x_{2}\frac{\partial V}{\partial x_{1}},
\label{Ex2HJBq2}
\\
-\frac{\partial V\left(t,q_{1},x,1\right)}{\partial t} &=\frac{-1}{2}\left(\frac{\partial V}{\partial x_{2}}\right)^{2}+x_{2}\frac{\partial V}{\partial x_{1}}-x_{1}\frac{\partial V}{\partial x_{2}},
\label{Ex2HJBq1}
\end{align}

The terminal condition at $t=t_f$ is specified as
\begin{equation}
V\left(t_f,q_{2},x,0\right) = \frac{1}{2}\left(x_{2}-v_{ref}\right)^{2},
\label{Ex2Vterminal}
\end{equation}
for $V\left(t,q_2,x,0\right)$, and the boundary condition for $V\left(t,q_1,x,1\right)$ and the switching instant $t=t_s$ are determined by
\begin{equation}
V\left(t_s,q_1,x,1\right) = V\left(t_s,q_2,x,0\right)+ \frac{1}{2} x_{1}^{2} ,
\label{Ex2Voptimality}
\end{equation}
and
\begin{equation}
\frac{-1}{2}\left(\frac{\partial V_{q_1}}{\partial x_{2}}\right)^{2}+x_{2}\frac{\partial V_{q_1}}{\partial x_{1}}-x_{1}\frac{\partial V_{q_1}}{\partial x_{2}} = \frac{-1}{2}\left(\frac{\partial V_{q_2}}{\partial x_{2}}\right)^{2}+x_{2}\frac{\partial V_{q_2}}{\partial x_{1}},
\end{equation}
subject to the switching manifold condition \eqref{Ex2SwitchingManifold}. 

\subsubsection*{The HMP - HDP Relationship}
Similar to Example 1, in order to illustrate the result in Theorem \ref{theorem:HMPHDPrelation}, we shall take partial derivatives of \eqref{Ex2HJBq2} and \eqref{Ex2HJBq1} with respect to $x$. We note that by the definition of the total derivative, \vspace{-1mm}
\begin{equation}
\hspace{-1pt} \frac{d}{dt}\left(\frac{\partial V\left(t,q_{i},x,2-i\right)}{\partial x}\right)
=\frac{\partial^{2}V}{\partial x\partial t}+\frac{\partial^{2}V}{\partial x^{2}}f_{q_{i}}\left(x,-\frac{\partial V}{\partial x_{2}}\right), \hspace{-3pt} \vspace{-1mm}
\end{equation}
which is equivalent to \vspace{-2mm}
\begin{multline}
\frac{d}{dt}\left[\!\!\begin{array}{c}
\frac{\partial V\left(t,q_{2},x,0\right)}{\partial x_{1}}\\
\frac{\partial V\left(t,q_{2},x,0\right)}{\partial x_{2}}
\end{array}\!\!\right]\!=\!\left[\!\!\begin{array}{c}
\frac{\partial^{2}V}{\partial x_{1}\partial t}\\
\frac{\partial^{2}V}{\partial x_{2}\partial t}
\end{array}\!\!\right]\!+\!\left[\!\!\begin{array}{cc}
\frac{\partial^{2}V}{\partial x_{1}^{2}} & \frac{\partial^{2}V}{\partial x_{1}\partial x_{2}}\\
\frac{\partial^{2}V}{\partial x_{2}\partial x_{1}} & \frac{\partial^{2}V}{\partial x_{2}^{2}}
\end{array}\!\!\right]\!\left[\!\!\begin{array}{c}
x_{2}\\
-\frac{\partial V}{\partial x_{2}}
\end{array}\!\!\right]
\\
=\left[\begin{array}{c}
\frac{\partial^{2}V}{\partial x_{1}\partial t}+x_{2}\frac{\partial^{2}V}{\partial x_{1}^{2}}-\frac{\partial^{2}V}{\partial x_{1}\partial x_{2}}\frac{\partial V}{\partial x_{2}}\\
\frac{\partial^{2}V}{\partial x_{2}\partial t}+x_{2}\frac{\partial^{2}V}{\partial x_{2}\partial x_{1}}-\frac{\partial^{2}V}{\partial x_{2}^{2}}\frac{\partial V}{\partial x_{2}}
\end{array}\right],
\label{Ex2totalDerivativeVq2} \vspace{-3mm}
\end{multline}
for $\nabla V\left(t,q_{2},x,0\right)$, and \vspace{-2mm}
\begin{multline}
\frac{d}{dt}\!\left[\!\!\!\begin{array}{c}
\frac{\partial V\left(t_{s},q_{1},x,1\right)}{\partial x_{1}}\\
\frac{\partial V\left(t_{s},q_{1},x,1\right)}{\partial x_{2}}
\end{array}\!\!\!\right]\!\!=\!\!\left[\!\!\begin{array}{c}
\frac{\partial^{2}V}{\partial x_{1}\partial t}\\
\frac{\partial^{2}V}{\partial x_{2}\partial t}
\end{array}\!\!\right]\!\!+\!\!\left[\!\!\begin{array}{cc}
\frac{\partial^{2}V}{\partial x_{1}^{2}} & \!\!\!\frac{\partial^{2}V}{\partial x_{1}\partial x_{2}}\\
\frac{\partial^{2}V}{\partial x_{2}\partial x_{1}}\!\!\! & \frac{\partial^{2}V}{\partial x_{2}^{2}}
\end{array}\!\!\right]\!\left[\!\!\!\begin{array}{c}
x_{2}\\
-x_{1}-\frac{\partial V}{\partial x_{2}}
\end{array}\!\!\!\right]
\\
=\left[\begin{array}{c}
\frac{\partial^{2}V}{\partial x_{1}\partial t}+x_{2}\frac{\partial^{2}V}{\partial x_{1}^{2}}-\frac{\partial^{2}V}{\partial x_{1}\partial x_{2}}\frac{\partial V}{\partial x_{2}}-x_{1}\frac{\partial^{2}V}{\partial x_{1}\partial x_{2}}\\
\frac{\partial^{2}V}{\partial x_{2}\partial t}+x_{2}\frac{\partial^{2}V}{\partial x_{2}\partial x_{1}}-\frac{\partial^{2}V}{\partial x_{2}^{2}}\frac{\partial V}{\partial x_{2}}-x_{1}\frac{\partial^{2}V}{\partial x_{2}^{2}}
\end{array}\right],
\label{Ex2totalDerivativeVq1} \vspace{-1mm}
\end{multline}
for $\nabla V\left(t_{s},q_{1},x,1\right)$. Taking the partial derivative of \eqref{Ex2HJBq2} with respect to $x$ and making a substitution using the resulting equation in \eqref{Ex2totalDerivativeVq2} yields \vspace{-1mm}
\begin{align}
\frac{d}{dt}\left(\frac{\partial V\left(t,q_{2},x,0\right)}{\partial x_{1}}\right) &=0,
\label{Ex2q2gradV1}
\\
\frac{d}{dt}\left(\frac{\partial V\left(t,q_{2},x,0\right)}{\partial x_{2}}\right) &=-\frac{\partial V\left(t,q_{2},x,0\right)}{\partial x_{1}},
\label{Ex2q2gradV2} \vspace{-1mm}
\end{align}
which are equivalent to the differential equations \eqref{Ex2q2lambda1dynamics} and \eqref{Ex2q2lambda2dynamics} for $\lambda\left(t\right)$, $t\in\left(t_{s},t_{f}\right]$. Similarly, taking the partial derivative with respect to $x$ of \eqref{Ex2HJBq1} and making a substitution in \eqref{Ex2totalDerivativeVq1} gives \vspace{-2mm}
\begin{align}
\frac{d}{dt}\left(\frac{\partial V\left(t,q_{1},x,1\right)}{\partial x_{1}}\right) &=\frac{\partial V\left(t,q_{1},x,1\right)}{\partial x_{2}},
\label{Ex2q1gradV1}
\\
\frac{d}{dt}\left(\frac{\partial V\left(t,q_{1},x,1\right)}{\partial x_{2}}\right) &=-\frac{\partial V\left(t,q_{1},x,1\right)}{\partial x_{1}}, 
\label{Ex2q1gradV2} \vspace{-1mm}
\end{align}
which are equivalent to the differential equations \eqref{Ex2q1lambda1dynamics} and \eqref{Ex2q1lambda2dynamics} for $\lambda\left(t\right)$, $t\in\left(t_{0},t_{s}\right]$. Moreover, it can easily be verified that the optimal sensitivity process $\nabla V$ satisfies the terminal condition 
\vspace{-3mm}
\begin{equation}
\nabla V\left(t_{f},q_{2},x,0\right)=\left[\begin{array}{c}
\frac{\partial V\left(t_{f},q_{2},x,0\right)}{\partial x_{1}}\\
\frac{\partial V\left(t_{f},q_{2},x,0\right)}{\partial x_{2}}
\end{array}\right]=\left[\begin{array}{c}
0\\
x_{2}\left(t_{f}\right)-v_{ref}
\end{array}\right],
\label{Ex2gradVf} \vspace{-2mm}
\end{equation}
and the boundary condition \vspace{-2mm}
\begin{multline}
\left[\begin{array}{c}
\frac{\partial V\left(t_{s},q_{1},x,1\right)}{\partial x_{1}}\\
\frac{\partial V\left(t_{s},q_{1},x,1\right)}{\partial x_{2}}
\end{array}\right]=\left[\begin{array}{c}
\frac{\partial V\left(t_{s},q_{2},x,0\right)}{\partial x_{1}}\\
\frac{\partial V\left(t_{s},q_{2},x,0\right)}{\partial x_{2}}
\end{array}\right]+\left[\begin{array}{c}
\frac{\partial c\left(x\left(t_{s}-\right)\right)}{\partial x_{1}}\\
\frac{\partial c\left(x\left(t_{s}-\right)\right)}{\partial x_{2}}
\end{array}\right]
\\
+p\left[\begin{array}{c}
\frac{\partial m\left(x\left(t_{s}-\right)\right)}{\partial x_{1}}\\
\frac{\partial m\left(x\left(t_{s}-\right)\right)}{\partial x_{2}}
\end{array}\right]=\left[\begin{array}{c}
\frac{\partial V\left(t_{s},q_{2},x,0\right)}{\partial x_{1}}+x_{1}\left(t_{s}-\right)\\
\frac{\partial V\left(t_{s},q_{2},x,0\right)}{\partial x_{2}}+p
\end{array}\right],
\label{Ex2gradVs} \vspace{-2mm}
\end{multline}
subject to $x_{2}\left(t_{s}-\right)=0$. Therefore, by the uniqueness of the results of the set of governing differential equations for $\nabla V$ and $\lambda$ which are subject to the same terminal and boundary conditions, along any optimal trajectory, the gradient of the value function is equal to the adjoint process corresponding to the same optimal trajectory. Interested readers are referred to \cite{APPEC1ADHS2015} for further discussion on this example.
\hfill $\square$

\section{Riccati Formalism for Linear Quadratic Tracking Problems}
\label{sec:Riccati}
Consider a hybrid system possessing linear vector fields in the form of \vspace{-1mm}
\begin{equation}
\dot{x}=A_{q_{i}}\left(t\right)x+B_{q_{i}}\left(t\right)u+F_{q_{i}}\left(t\right),\;\;\;\;\;\;\;\; t\in\left[t_{i},t_{i+1}\right), \vspace{-1mm}
\end{equation}
together with a given initial condition $\left(q,x\right) \left(t_0\right) = \left(q_0,x_0\right)$ and linear jump maps \vspace{-1mm}
\begin{equation}
x\left(t_{j}\right)=P_{\sigma_{j}}x\left(t_{j}-\right)+J_{\sigma_{j}}, \vspace{-1mm}
\label{JumpMap}
\end{equation}
provided at the switching instances $t_j, 1 \leq j \leq L$ which are not a priori fixed. If  $t_{j}$ corresponds to an autonomous switching from $q_{j-1}$ to $q_{j}$, the switching manifold constraint $m_{q_{j-1}q_{j}}x\left(t_{j}-\right)+n_{q_{j-1}q_{j}}=0$ is satisfied. A controlled switching instant $t_{j}$, in contrast, is a direct consequence of the discrete control input switching command. Consider the HOCP \vspace{-1mm}
\begin{multline}
\hspace{-8pt} J=\sum_{i=0}^{L}\int_{t_{i}}^{t_{i+1}}\frac{1}{2}\left\Vert x_{q_{i}}\left(t\right)-r_{q_{i}}\left(t\right)\right\Vert_{L_{q_{i}}}^{2}+\frac{1}{2}\left\Vert u_{q_{i}}\left(t\right)\right\Vert_{R_{q_{i}}}^{2}dt
\\
+\sum_{j=1}^{L}\frac{1}{2}\left\Vert x_{q_{j-1}}\left(t_{j}-\right)-d_{q_{j-1}}^{t_{j}}\right\Vert_{C_{\sigma_{j}}}^{2}+\frac{1}{2}\left\Vert x_{q_{L}}\left(t_{f}\right)-d_{q_{L}}^{t_{L+1}}\right\Vert_{G_{q_{L}}}^{2} \hspace{-3pt},\hspace{-5pt} \vspace{-1mm}
\end{multline}
where $L_{q_{i}}^{T}=L_{q_{i}}\geq 0$, $R_{q_{i}}^{T}=R_{q_{i}} >0$, $C_{\sigma_{j}}^{T}=C_{\sigma_{j}} \geq 0$, $G_{q_{L}}^{T}=G_{q_{L}} \geq 0$. For the ease of notation and unless otherwise states, the time varying, continuously differentiable matrices $A_{q}\left(t\right)$, $B_{q}\left(t\right)$, $F_{q}\left(t\right)$, $L_{q}\left(t\right)$ ans $R_{q}\left(t\right)$ are simply denoted by $A_{q}$, $B_{q}$, $F_{q}$, $L_{q}$ and $R_{q}$.

Employing the HMP, and making use of the equivalent HMP-HDP relationship as necessary, the Hamiltonians are formed as \vspace{-1mm}
\begin{multline}
H_{i}=\frac{1}{2}\left(x_{q_{i}}^{\left(t\right)}-r_{q_{i}}^{\left(t\right)}\right)^{T}L_{q_{i}}^{\left(t\right)}\left(x_{q_{i}}^{\left(t\right)}-r_{q_{i}}^{\left(t\right)}\right)+\frac{1}{2}u_{q_{i}}^{\left(t\right)T}R_{q_{i}}^{\left(t\right)}u_{q_{i}}^{\left(t\right)}\\+\lambda_{q_{i}}^{\left(t\right)T}\left(A_{q_{i}}^{\left(t\right)}x_{q_{i}}^{\left(t\right)}+B_{q_{i}}^{\left(t\right)}u_{q_{i}}^{\left(t\right)}+F_{q_{i}}^{\left(t\right)}\right). \hspace{-3pt}
\label{HamiltonianLQT} \vspace{-1mm}
\end{multline}

From Theorem \ref{theorem:HMP}, the Hamiltonian minimization gives \vspace{-1mm}
\begin{equation}
\frac{\partial H_{i}}{\partial u_{q_{i}}}=0\;\Rightarrow\; R_{q_{i}}u_{q_{i}}+B_{q_{i}}^{T}\lambda_{q_{i}}=0\;\Rightarrow\; u_{q_{i}}^{o}=-R_{q_{i}}^{-1}B_{q_{i}}^{T}\lambda^o_{q_{i}}, \vspace{-1mm}
\end{equation}
and hence  \vspace{-1mm}
\begin{multline}
\dot{x}_{q_{i}}^{o}=\frac{\partial H_{i}}{\partial\lambda_{q_{i}}}=A_{q_{i}}x_{q_{i}}^{o}+B_{q_{i}}u_{q_{i}}^{o}+F_{q_{i}}
\\[-\belowdisplayskip]
=A_{q_{i}}x_{q_{i}}^{o}-B_{q_{i}}R_{q_{i}}^{-1}B_{q_{i}}^{T}\lambda_{q_{i}}^{o}+F_{q_{i}}, 
\end{multline}
\vspace{-5mm}
\begin{multline}
\dot{\lambda}_{q_{i}}^{o}=-\frac{\partial H_{i}}{\partial x_{q_{i}}}=-L_{q_{i}}\left(x_{q_{i}}^{o}-r_{q_{i}}\right)-A_{q_{i}}^{T}\lambda_{q_{i}}^{o}
\\[-\belowdisplayskip]
=-L_{q_{i}}x_{q_{i}}^{o}-A_{q_{i}}^{T}\lambda_{q_{i}}^{o}+L_{q_{i}}r_{q_{i}}, \vspace{-1mm}
\end{multline}
which have a matrix representation  \vspace{-1mm}
\begin{equation}
\left[\!\begin{array}{c}
\dot{x}_{q_{i}}^{o}\\
\dot{\lambda}_{q_{i}}^{o}
\end{array}\!\right]=\left[\!\begin{array}{cc}
A_{q_{i}}^{\left(t\right)} & \hspace{-4pt}-B_{q_{i}}^{\left(t\right)}R_{q_{i}}^{\left(t\right)-1}B_{q_{i}}^{\left(t\right)T}\\
-L_{q_{i}}^{\left(t\right)} & -A_{q_{i}}^{\left(t\right)T}
\end{array}\!\right]\left[\!\begin{array}{c}
x_{q_{i}}^{o}\\
\lambda_{q_{i}}^{o}
\end{array}\!\right]+\left[\!\begin{array}{c}
F_{q_{i}}^{\left(t\right)}\\
L_{q_{i}}^{\left(t\right)}r_{q_{i}}^{\left(t\right)}
\end{array}\!\right],\label{TotalAffineProcess} \vspace{-1mm}
\end{equation}
for $t\in\left[t_{i},t_{i+1}\right)$, subject to the boundary conditions \vspace{-1mm} \vspace{-1mm}
\begin{align}
x_{q_{0}}^{o}\left(t_{0}\right)&=x_{0},
\\
x_{q_{j}}^{o}\left(t_{j}\right)&=P_{\sigma_{j}}x_{q_{j-1}}^{o}\left(t_{j}-\right)+J_{\sigma_{j}},
\\
\lambda_{q_{L}}^{o}\left(t_{f}\right)&=\nabla g=G_{q_{L}}\left(x_{q_{L}}^{o}\left(t_{f}\right)-d_{q_{L}}^{t_{L+1}}\right),
\\
\lambda_{\! q_{j\!-\!1}}^{o}\!\left(t_{j}\right)&=P_{\sigma_{j}}^{T}\lambda_{\! q_{j\!-\!1}}^{o}\left(t_{j}\!+\right)+p\, m_{\! q_{j\!-\!1}q_{j}}\!+\! C_{\sigma_{j}}\!\left(x_{\! q_{j\!-\!1}}^{o}\left(t_{j}-\right)\!-d_{\! q_{j\!-\!1}}^{t_{j}}\right). \vspace{-1mm}
\end{align}

Denoting the state transition matrix for the system in \eqref{TotalAffineProcess} by $\varphi$, the solution of \eqref{TotalAffineProcess} can be written as \vspace{-1mm}
\begin{equation}
\left[\!\!\begin{array}{c}
x_{q_{i}}^{o}\!\left(t\right)\\
\lambda_{q_{i}}^{o}\!\left(t\right)
\end{array}\!\!\right]=\varphi_{q_{i}}\!\left(t,t_{i}\right)\left[\!\!\!\begin{array}{c}
x_{q_{i}}^{o}\!\left(t_{i}\right)\\
\lambda_{q_{i}}^{o}\!\left(t_{i}\!+\right)
\end{array}\!\!\!\right]+\int_{t_{i}}^{t}\varphi_{q_{i}}\!\left(t,\tau\right)\left[\!\!\begin{array}{c}
F_{q_{i}}^{\left(\tau\right)}\\
L_{q_{i}}^{\left(\tau\right)}r_{q_{i}}^{\left(\tau\right)}
\end{array}\!\!\right]d\tau, \vspace{-1mm}
\end{equation}
and also as \vspace{-1mm}
\begin{equation}
\!\!\left[\!\!\!\!\begin{array}{c}
x_{q_{i}}^{o}\!\left(t_{i\!+\!1}\!-\right)\\
\lambda_{q_{i}}^{o}\!\left(t_{i\!+\!1}\!-\right)
\end{array}\!\!\!\!\right]\!\!=\!\varphi_{q_{i}}\!\left(t_{i\!+\!1},t\right)\!\left[\!\!\!\!\begin{array}{c}
x_{q_{i}}^{o}\!\left(t\right)\\
\lambda_{q_{i}}^{o}\!\left(t\right)
\end{array}\!\!\!\!\right]\!+\!\int_{t}^{t_{i\!+\!1}}\!\!\!\!\!\varphi_{q_{i}}\!\left(t_{i\!+\!1},\tau\right)\!\left[\!\!\!\!\begin{array}{c}
F_{q_{i}}^{\left(\tau\right)}\\
L_{q_{i}}^{\left(\tau\right)}r_{q_{i}}^{\left(\tau\right)}
\end{array}\!\!\!\!\right]d\tau,\label{AffineSolutionFirstForm} \vspace{-1mm}
\end{equation}
Partitioning $\varphi$ in the form of  \vspace{-1mm}
\begin{equation}
\varphi_{q_{i}}\left(t_{i+1},t\right)=\left[\begin{array}{cc}
\varphi_{q_{i}}^{11}\left(t_{i+1},t\right) & \varphi_{q_{i}}^{12}\left(t_{i+1},t\right)\\
\varphi_{q_{i}}^{21}\left(t_{i+1},t\right) & \varphi_{q_{i}}^{22}\left(t_{i+1},t\right)
\end{array}\right], \vspace{-1mm}
\end{equation}
and denoting \vspace{-1mm}
\begin{equation}
\left[\!\!\!\begin{array}{c}
h_{q_{i}}^{1}\!\left(t\right)\\
h_{q_{i}}^{2}\!\left(t\right)
\end{array}\!\!\!\right]\!\!:=\!\!\int_{t_{i}}^{t}\!\left[\!\!\begin{array}{cc}
\varphi_{q_{i}}^{11}\left(t_{i+1},t\right)\! & \!\varphi_{q_{i}}^{12}\left(t_{i+1},t\right)\\
\varphi_{q_{i}}^{21}\left(t_{i+1},t\right)\! & \!\varphi_{q_{i}}^{22}\left(t_{i+1},t\right)
\end{array}\!\!\right]\!\left[\!\!\begin{array}{c}
F_{q_{i}}^{\left(\tau\right)}\\
L_{q_{i}}^{\left(\tau\right)}r_{q_{i}}^{\left(\tau\right)}
\end{array}\!\!\right]d\tau, \vspace{-1mm}
\end{equation}
turns \eqref{AffineSolutionFirstForm} into \vspace{-1mm}
\begin{align}
\hspace{-8pt}x_{q_{i}}^{o}\!\left(t_{i\!+\!1}-\right)&\!=\!\varphi_{q_{i}}^{11}\!\left(t_{i\!+\!1},t\right)x_{q_{i}}^{o}\!\left(t\right)+\varphi_{q_{i}}^{12}\!\left(t_{i\!+\!1},t\right)\lambda_{q_{i}}^{o}\!\left(t\right)+h_{q_{i}}^{1}\!\left(t\right),\hspace{-4pt}\label{OptimalTrajectoryAdjoint1}
\\
\hspace{-8pt}\lambda_{q_{i}}^{o}\!\left(t_{i\!+\!1}-\right)&\!=\!\varphi_{q_{i}}^{21}\!\left(t_{i\!+\!1},t\right)x_{q_{i}}^{o}\!\left(t\right)+\varphi_{q_{i}}^{22}\!\left(t_{i\!+\!1},t\right)\lambda_{q_{i}}^{o}\!\left(t\right)+h_{q_{i}}^{2}\!\left(t\right).\hspace{-4pt}
\label{OptimalTrajectoryAdjoint2} \vspace{-1mm}
\end{align}

In the location $q_{L}$ with $t\in\left[t_{L},t_{L+1}\right]=:\left[t_{L},t_{f}\right]$
the terminal condition for $\lambda^{o}$ is provided as 
\begin{equation}
\lambda_{q_{L}}^{o}\left(t_{f}\right)=G_{q_{L}}\left(x_{q_{L}}^{o}\left(t_{f}\right)-d_{q_{L}}^{t_{L+1}}\right).\label{LambdaTerminalAffine}
\end{equation}

Replacing $\lambda_{q_{L}}^{o}\left(t_{f}\right)$ from the above equation
in \eqref{OptimalTrajectoryAdjoint2} and substituting
$x_{q_{L}}^{o}\left(t_{f}\right)$ from \eqref{OptimalTrajectoryAdjoint1}
result in \vspace{-1mm}
\begin{multline}
G_{q_{L}}\left(\varphi_{q_{L}}^{11}\!\left(t_{f},t\right)x_{q_{L}}^{o}\left(t\right)+\varphi_{q_{L}}^{12}\!\left(t_{f},t\right)\lambda_{q_{L}}^{o}\left(t\right)+h_{q_{L}}^{1}\!\left(t\right)-d_{q_{L}}^{t_{L+1}}\right)
\\
=\varphi_{q_{L}}^{21}\!\left(t_{f},t\right)x_{q_{L}}^{o}\left(t\right)+\varphi_{q_{L}}^{22}\!\left(t_{f},t\right)\lambda_{q_{L}}^{o}\left(t\right)+h_{q_{L}}^{2}\!\left(t\right), \hspace{-6pt} \vspace{-1mm}
\end{multline}
which is equivalent to \vspace{-1mm}
\begin{multline}
\hspace{-10pt}\left[\! G_{q_{L}}\!\varphi_{q_{L}}^{11}\!\left(t_{f},t\right)\!-\!\varphi_{q_{L}}^{21}\!\left(t_{f},t\right)\!\right]x_{q_{L}}^{o}\!\left(t\right)\!+\! G_{q_{L}}\! h_{q_{L}}^{1}\!\left(t\right)\!-\! G_{q_{L}}\! d_{q_{L}}^{t_{L\!+\!1}}\!\!-\! h_{q_{L}}^{2}\!\left(t\right)
\\
=\left[\varphi_{q_{L}}^{22}\!\left(t_{f},t\right)-G_{q_{L}}\varphi_{q_{L}}^{12}\!\left(t_{f},t\right)\right]\lambda_{q_{L}}^{o}\left(t\right).\vspace{-1mm}
\end{multline}

From the nonsingularity of the coefficients (see e.g. \cite{Kalman})
we may write \vspace{-1mm}
\begin{multline}
\hspace{-9pt}\lambda_{q_{L}}^{o}\!\!\!\left(t\right)\!=\!\left[\!\varphi_{q_{i}}^{22}\!\!\!\left(t_{\! f},t\right)\!-\! G_{q_{L}}\!\varphi_{q_{i}}^{12}\!\!\!\left(t_{\! f},t\right)\!\right]^{\!\!-\!1}\!\!\!\left[\! G_{q_{L}}\!\varphi_{q_{i}}^{11}\!\!\!\left(t_{\! f},t\right)\!-\!\varphi_{q_{i}}^{21}\!\!\!\left(t_{\! f},t\right)\!\right]\! x_{q_{L}}^{o}\!\!\!\left(t\right)
\\
+\!\left[\!\varphi_{q_{i}}^{22}\!\left(t_{f},t\right)\!-\! G_{q_{L}}\!\varphi_{q_{i}}^{12}\!\left(t_{f},t\right)\!\right]^{-1}\!\left[\! G_{q_{L}}\! h_{q_{i}}^{1}\!\left(t\right)\!-\! G_{q_{L}}\! d_{q_{L}}^{t_{L+1}}\!-\! h_{q_{i}}^{2}\!\left(t\right)\!\right].
\label{OriginalRiccatiBasis} \vspace{-1mm}
\end{multline}

With the definition of $K_{q_{L}}\left(t\right)$ and $s_{q_{L}}\left(t\right)$
such that
\begin{equation}
\lambda_{q_{L}}^{o}\left(t\right)=K_{q_{L}}\left(t\right)x_{q_{L}}^{o}\left(t\right)+s_{q_{L}}\left(t\right),\label{LinearAdjointLaw}
\end{equation}
the optimal control law is therefore given by
\begin{equation}
u_{q_{L}}^{o}=-R_{q_{L}}^{-1}B_{q_{L}}^{T}K_{q_{L}}\left(t\right)x_{q_{L}}^{o}\left(t\right)-R_{q_{L}}^{-1}B_{q_{L}}^{T}s_{q_{L}}\left(t\right).
\end{equation}

Differentiation of \eqref{LinearAdjointLaw} gives
\begin{equation}
\dot{\lambda}_{q_{L}}^{o}=\dot{K}_{q_{L}}x_{q_{L}}^{o}+K_{q_{L}}\dot{x}_{q_{L}}^{o}+\dot{s}_{q_{L}}.
\end{equation}

Replacing $\dot{\lambda}_{q_{L}}^{o}$ and $\dot{x}_{q_{L}}^{o}$ from \eqref{TotalAffineProcess} and using \eqref{LinearAdjointLaw} we get
\begin{multline}
\hspace{-7pt} \left[\dot{K}_{q_{L}}+L_{q_{L}}+K_{q_{L}}A_{q_{L}}+A_{q_{L}}^{T}K_{q_{L}}-K_{q_{L}}B_{q_{L}}R_{q_{L}}^{-1}B_{q_{L}}^{T}K_{q_{L}}\right]x_{q_{L}}^{o}
\\
+\left[\dot{s}_{q_{L}}+\left(A_{q_{L}}^{T}-K_{q_{L}}B_{q_{L}}R_{q_{L}}^{-1}B_{q_{L}}^{T}\right)s_{q_{L}}+K_{q_{L}}F_{q_{L}}-L_{q_{L}}r_{q_{L}}\right] =0 . \hspace{-3mm} \label{RiccatiTypeEquation}
\end{multline}
Since the equation \eqref{RiccatiTypeEquation} holds for all $x_{q_{L}}^{o}\in \mathbb{R}^{n_{q_{L}}}$ and $r_{q_{L}}\left(t\right) \in \mathbb{R}^{n_{q_{L}}}$, the Riccati equations \vspace{-1mm}
\begin{align}
\dot{K}_{q_{L}} &=-L_{q_{L}}-K_{q_{L}}A_{q_{L}}-A_{q_{L}}^{T}K_{q_{L}}+K_{q_{L}}B_{q_{L}}R_{q_{L}}^{-1}B_{q_{L}}^{T}K_{q_{L}},
\\
\dot{s}_{q_{L}} &=-\left(A_{q_{L}}^{T}-K_{q_{L}}B_{q_{L}}R_{q_{L}}^{-1}B_{q_{L}}^{T}\right)s_{q_{L}}-K_{q_{L}}F_{q_{L}}+L_{q_{L}}r_{q_{L}}, \vspace{-1mm}
\end{align}
must hold. The terminal conditions can be determined by the evaluation of \eqref{LinearAdjointLaw} at $t_{f}$ and the use of \eqref{LambdaTerminalAffine} \vspace{-1mm}
to get
\begin{align}
K_{q_{L}}\left(t_{f}\right) &=G_{q_{L}}, \label{Kterminal}
\\
s_{q_{L}}\left(t_{f}\right) &=-G_{q_{L}}d_{q_{L}}^{t_{L+1}} . \label{Sterminal} \vspace{-1mm}
\end{align}

At the switching instant $t_{L}$ the adjoint process boundary condition from the HMP is given as \vspace{-1mm}
\begin{equation}
\lambda_{q_{L-1}}^{o}\!\!\left(t_{L}\right)=P_{\sigma_{L}}^{T}\!\lambda_{q_{L}}^{o}\!\!\left(t_{L}\!+\right)+p\, m_{q_{L\!-\!1}q_{L}}\!+C_{\sigma_{L}}\!\left(x_{q_{L-1}}^{o}\!\!\left(t_{L}\!-\right)-d_{q_{L\!-\!1}}^{t_{L}}\right), 
\end{equation}
\vskip-4mm
\noindent
or \vspace{-4mm}
\begin{multline}
K_{q_{L-1}}\left(t_{L}\right)x_{q_{L-1}}^{o}\left(t_{L}-\right)+s_{q_{L-1}}\left(t_{L}\right)
\\
\hspace{45pt}=P_{\sigma_{L}}^{T}\left(K_{q_{L}}\left(t_{L}\right)x_{q_{L-1}}^{o}\left(t_{L}\right)+s_{q_{L}}\left(t_{L}+\right)\right)+p\, m_{q_{L-1}q_{L}}
\\[-\belowdisplayskip]
+C_{\sigma_{L}}\left(x_{q_{L-1}}^{o}\left(t_{L}-\right)-d_{q_{L-1}}^{t_{L}}\right), 
\end{multline}
\vskip-4mm
\noindent which gives \vspace{-3mm}
\begin{multline}
K_{q_{L-1}}\left(t_{L}\right)x_{q_{L-1}}^{o}\left(t_{L}-\right)+s_{q_{L-1}}\left(t_{L}\right)
\\
\hspace{15pt} =\left[P_{\sigma_{L}}^{T}K_{q_{L}}\left(t_{L}\right)P_{\sigma_{L}}+C_{\sigma_{L}}\right]x_{q_{L-1}}^{o}\left(t_{L}-\right)+P_{\sigma_{L}}^{T}s_{q_{L}}\left(t_{L}-\right)
\\
+p\, m_{q_{L-1}q_{L}}-C_{\sigma_{L}}d_{q_{L-1}}^{t_{L}}+P_{\sigma_{L}}^{T}K_{q_{L}}\left(t_{L}\right)J_{\sigma_{L}}.
\label{RiccatiBC} \vspace{-1mm}
\end{multline}

Since \eqref{RiccatiBC} holds for all points on the switching surface, i.e. all $x\left(t_{L}-\right)\in\left\{x \in \mathbb{R}^{n_{q_{L-1}}} : m_{q_{L-1},q_L} x + n_{q_{L-1},q_L} = 0 \right\}$, the following equalities must hold \vspace{-1mm}
\begin{equation}
K_{q_{L-1}}\left(t_{L}\right)=P_{\sigma_{L}}^{T}K_{q_{L}}\left(t_{L}\right)P_{\sigma_{L}}+C_{\sigma_{L}}, \vspace{-1mm}
\label{kRiccatiBC}
\end{equation}
\begin{equation}
\lambda_{q_{L-1}}^{o}\!\!\left(t_{L}\right)=P_{\sigma_{L}}^{T}\!\lambda_{q_{L}}^{o}\!\!\left(t_{L}\!+\right)+p\, m_{q_{L\!-\!1}q_{L}}\!+C_{\sigma_{L}}\!\left(x_{q_{L-1}}^{o}\!\!\left(t_{L}\!-\right)-d_{q_{L\!-\!1}}^{t_{L}}\right). \vspace{-1mm}
\label{sRiccatiBC}
\end{equation}

For the writing of the Hamiltonian continuity condition \eqref{Hamiltonian jump}, we substitute the minimizing control input $u^{o}=-R_{q_{i}}^{-1}B_{q_{i}}^{T}\lambda$ and the (optimal) adjoint process $\lambda^{o}\left(t\right)=K_{q_{L}}\left(t\right)x^{o}\left(t\right)+s_{q_{L}}\left(t\right)$ (see also \eqref{LinearAdjointLaw}) into the Hamiltonian \eqref{HamiltonianLQT}, i.e. \vspace{-2mm}
\begin{multline}
H^o_{i} \left(t, x_{q_i}\right) =\frac{1}{2}\left(x_{q_{i}}-r_{q_{i}}\right)^{T}L_{q_{i}}\left(x_{q_{i}}-r_{q_{i}}\right)
\\[-\belowdisplayskip] 
+\left(K_{q_{i}}\! x_{q_{i}}\!+s_{q_{i}}\right)^{T}\left(A_{q_{i}}\! x_{q_{i}}\!-\frac{1}{2}B_{q_{i}}\! R_{q_{i}}^{-1}\! B_{q_{i}}^{T}\!\left(K_{q_{i}}\! x_{q_{i}}\!+s_{q_{i}}\right)+F_{q_{i}}\!\right).
\label{HamiltonianLQTminimized}
\end{multline}
\vskip-6mm
Substitution of \eqref{HamiltonianLQTminimized} in \eqref{Hamiltonian jump} results in \vspace{-2mm}
\begin{equation}
\begin{array}{c}
\frac{1}{2}\left(x_{q_{L-1}}^{\left(t_{L}-\right)}-r_{_{q_{L-1}}}^{\left(t_{L}-\right)}\right)^{T}L_{q_{L-1}}\left(x_{q_{L-1}}^{\left(t_{L}-\right)}-r_{_{q_{L-1}}}^{\left(t_{L}-\right)}\right)\hfill\\
+\left(K_{_{q_{L-1}}}^{\left(t_{L}-\right)}x_{q_{L-1}}^{\left(t_{L}-\right)}+s_{_{q_{L-1}}}^{\left(t_{L}-\right)}\right)^{T}\left\{ A_{q_{L-1}}x_{q_{L-1}}^{\left(t_{L}-\right)}+F_{q_{L-1}}\vphantom{f^{f^{f}}}\hfill\right.\\
\left.-\frac{1}{2}B_{q_{L-1}}R_{q_{L-1}}^{-1}B_{q_{L-1}}^{T}\left(K_{_{q_{L-1}}}^{\left(t_{L}-\right)}x_{q_{L-1}}^{\left(t_{L}-\right)}+s_{_{q_{L-1}}}^{\left(t_{L}-\right)}\right)\right\} \\
=\frac{1}{2}\left(x_{q_{L}}^{\left(t_{L}\right)}\!-\! r_{_{q_{L}}}^{\left(t_{L}\right)}\right)^{T}\!\! L_{q_{L}}\!\left(x_{q_{L}}^{\left(t_{L}\right)}\!-\! r_{_{q_{L}}}^{\left(t_{L}\right)}\right)+\left(K_{_{q_{L}}}^{\left(t_{L}\right)}x_{q_{L}}^{\left(t_{L}\right)}\!+\! s_{_{q_{L}}}^{\left(t_{L}\right)}\right)^{T}\hfill\\
\hspace{9mm}\left\{ A_{q_{L}}x_{q_{L}}^{\left(t_{L}\right)}-\frac{1}{2}B_{q_{L}}R_{q_{L}}^{-1}B_{q_{L}}^{T}\left(K_{_{q_{L}}}^{\left(t_{L}\right)}x_{q_{L}}^{\left(t_{L}\right)}+s_{_{q_{L}}}^{\left(t_{L}\right)}\right)+F_{q_{L}}\right\} .
\end{array}\vspace{-1mm}
\end{equation}

Using a backward induction and following a similar approach as above, optimal controls are derived as\vspace{-1mm}
\begin{equation}
u_{q_{i}}^{o}=-R_{q_{i}}^{-1}B_{q_{i}}^{T}K_{q_{i}}\left(t\right)x_{q_{i}}^{o}\left(t\right)-R_{q_{i}}^{-1}B_{q_{i}}^{T}s_{q_{i}}\left(t\right), \vspace{-1mm}
\end{equation}
for $t\in \left[t_i,t_{i+1}\right)$, $0\leq i \leq L$, where ${K}_{q_{i}}$ and ${s}_{q_{i}}$ satisfy the following Riccati equations
\begin{align}
\dot{K}_{q_{i}}&=-L_{q_{i}}-K_{q_{i}}A_{q_{i}}-A_{q_{i}}^{T}K_{q_{i}}+K_{q_{i}}B_{q_{i}}R_{q_{i}}^{-1}B_{q_{i}}^{T}K_{q_{i}},
\\
\dot{s}_{q_{i}}&=-\left(A_{q_{i}}^{T}-K_{q_{i}}B_{q_{i}}R_{q_{i}}^{-1}B_{q_{i}}^{T}\right)s_{q_{i}}-K_{q_{i}}F_{q_{i}}+L_{q_{i}}r_{q_{i}},\vspace{-1mm}
\end{align}
$t\in \left[t_i,t_{i+1}\right)$, $0\leq i \leq L$, and are subject to the terminal conditions \eqref{Kterminal} and \eqref{Sterminal}, and the boundary conditions \vspace{-1mm}
\begin{align}
K_{q_{j-1}}\left(t_{j}\right)&=P_{\sigma_{j}}^{T}K_{q_{j}}\left(t_{j}\right)P_{\sigma_{j}}+C_{\sigma_{j}},
\\[-1mm]
s_{q_{j-1}}\left(t_{j}\right)&=P_{\sigma_{j}}^{T}s_{q_{j}}\left(t_{j}+\right)+p\, m_{q_{j-1}q_{j}}-C_{\sigma_{j}}d_{q_{j-1}}^{t_{j}}+P_{\sigma_{j}}^{T}K_{q_{j}}\left(t_{j}\right)J_{\sigma_{j}},
\end{align}
\vskip-3mm 
\noindent
$1 \leq j \leq L$, and where for the minimized Hamiltonian functions \vspace{-6mm}
\begin{multline}
\hspace{-9pt}H^{o}_i\left(t,x_{q_{i}}\right)=\frac{1}{2}\left\Vert x_{q_{i}}-r_{q_{i}}\right\Vert_{L_{q_{i}}}^{2}
\\[-4pt] 
+\!\left(K_{q_{i}}\! x_{q_{i}}\!\!+s_{q_{i}}\!\right)^{T}\!\!\left(\!\! A_{q_{i}}x_{q_{i}}\!\!-\!\frac{1}{2}B_{q_{i}}R_{q_{i}}^{-1}B_{q_{i}}^{T}\!\left(K_{q_{i}}x_{q_{i}}+s_{q_{i}}\!\right)\!+\! F_{q_{i}}\!\!\right)\!,\hspace{-7pt}
\end{multline}
\vskip-3mm 
\noindent satisfy the following Hamiltonian continuity condition at the optimal switching instants \vspace{-1mm}
\begin{equation}
H_{j-1}^{o}\left(t_{j},x_{q_{j-1}}^{\left(t_{j}-\right)}\right)=H_{j}^{o}\left(t_{j},x_{q_{j}}^{\left(t_{j}\right)}\right)\equiv H_{j}^{o}\left(t_{j},P_{\sigma_{j}}x_{q_{j-1}}^{\left(t_{j}-\right)}+J_{\sigma_{j}}\right).
\end{equation} 
\vskip-1mm
Interested readers are referred to \cite{APPECCDC2015} for further discussion and an analytic example on the HMP-HDP relationship.
\hfill $\square$

\vspace{-2mm}

\section{Concluding Remarks}
In this paper it is proved in the context of deterministic hybrid optimal control theory that the adjoint process in the Hybrid Minimum Principle (HMP) and the gradient of the value function in Hybrid Dynamic Programming (HDP) are identical to each other almost everywhere along optimal trajectories; this is because they both satisfy the same Hamiltonian equations with the same boundary conditions. So due to the fact that the same adjoint process - gradient process relationship holds for continuous parameter (i.e. non-hybrid) stochastic optimal control problems via the so-called Feynman-Kac formula (see e.g. \cite{XYZ}), it is natural to expect the adjoint process in the Stochastic HMP \cite{APPECCDC2016} and the gradient of the value function in Stochastic HDP to be identical almost everywhere. Indeed, the formulation of Stochastic Hybrid Dynamic Programming and the investigation of its relationship to the Stochastic Hybrid Minimum Principle is the subject of another study to be presented in a consecutive paper.

\vspace{-2mm}

%%%%%%%%%%%%%%%%%%%%%%%%%%%%%%%%%%%%%%%%%%%%%%%%%%%%%%%%%%%%%%%%%%%%%%%%%%%%%%%%
%\section{ACKNOWLEDGMENTS}

%The authors gratefully acknowledge the contribution of National Research Organization and reviewers' comments.

%%%%%%%%%%%%%%%%%%%%%%%%%%%%%%%%%%%%%%%%%%%%%%%%%%%%%%%%%%%%%%%%%%%%%%%%%%%%%%%%

\bibliographystyle{IEEEtran}
\bibliography{APPEC2016ArXiv}

\appendix

\subsection{Proof of Theorem \ref{theorem:VisLipschitz}}
\label{sec:AppendixVisLipschitz}

For simplicity of notation we use $x, \hat{x}, u$, etc. instead of $x_q, \hat{x}_q, u_q$, etc. whenever the spaces $\mathbb{R}^{n_q}, \mathbb{R}\times \mathbb{R}^{n_q}, U_q$, etc. can easily be distinguished. For a given hybrid control input $I_{L-j+1}=\left(S_{L-j+1},u\right)$ we use $\hat{x}_{\tau} \equiv \hat{x} \left(\tau;t,\hat{x}_t\right)$ to denote the extended continuous valued state as in \eqref{ExtendedState} at the instant $\tau$ passing through $x_t$, where $t \leq \tau \leq t_f$. We also define
\begin{equation}
K_1=\sup\left\{ \left\Vert \hat{f}_{q}\left(\hat{x},u\right)\right\Vert :\left(q,\hat{x},u\right)\in Q\times \hat{B}_{r}\times U\right\}, 
\end{equation}
where $\hat{B}_{r}:=\left\{\hat{x}=\left[z, x^T\right]^T: \left|z\right|^2 + \left\Vert x\right\Vert^2 <r^2\right\}$.
\begin{proof}
First, consider the stage where no remaining switching is available and hence $t\in \left(t_L,t_{L+1}\right) = \left(t_L,t_{f}\right)$. In this case \vspace{-1mm}
\begin{equation}
\hat{x}\left(t_{f};t,\hat{x}_{t}\right)=\hat{x}_{t}+\int_{t}^{t_{f}}\hat{f}_{q_{L}}\left(\hat{x}_{\tau},u_{\tau}\right)d\tau,\vspace{-1mm}
\end{equation}
which gives\vspace{-1mm}
\begin{equation}
\left\Vert \hat{x}\left(t_{f};t,\hat{x}_{t}\right)-\hat{x}_{t}\right\Vert \leq K_{1}\left|t_{f}-t\right|+\int_{t}^{t_{f}}\hat{K}_{f}\left\Vert \hat{x}\left(\tau;t,\hat{x}_{t}\right)-\hat{x}_{t}\right\Vert d\tau,
\end{equation}
where $\hat{K}_{f}$ depends only on $K_f$ and $K_l$ which are defined in assumptions A0 and A2 respectively. By the Gronwall-Bellman inequality this results in\vspace{-1mm}
\begin{multline}
\left\Vert \hat{x}\left(t_{f};t,\hat{x}_{t}\right)-\hat{x}_{t}\right\Vert \leq K_{1}\left|t_{f}-t\right|+\int_{t}^{t_{f}}\hat{K}_{f}K_{1}\left(\tau-t\right)e^{\hat{K}_{f}\left(t_{f}-\tau\right)}d\tau
\\
\leq K_{2}\left|t_{f}-t\right|\leq K_{2}\left|t_{f}-t_{L}\right|,\vspace{-1mm}
\label{NoSwitchBellmanGronwall}
\end{multline}
where $K_{2}=\max\left\{ K_{1},\hat{K}_{f}K_{1}\left(t_{f}-t_{L}\right)e^{\hat{K}_{f}\left(t_{f}-t_{L}\right)}\right\}$.
Hence, by the semi-group properties of ODE solutions and by use of \eqref{NoSwitchBellmanGronwall}, for $s \geq t$ and $\hat{x}_s \in N_{r_{\hat{x}}}\left(\hat{x}_t\right)$ we have
\begin{multline}
\left\Vert \hat{x}\left(t_{f};t,\hat{x}_{t}\right)-\hat{x}\left(t_{f};s,\hat{x}_{s}\right)\right\Vert
 \leq\left\Vert \hat{x}_{t}-\hat{x}_{s}\right\Vert +\left\Vert \hat{x}\left(s;t,\hat{x}_{t}\right)-\hat{x}_{t}\right\Vert 
\\
+\int_{s}^{t_{f}}\hat{K}_{f}\left\Vert \hat{x}\left(\tau;t,\hat{x}_{t}\right)-\hat{x}\left(\tau;s,\hat{x}_{s}\right)\right\Vert d\tau
\\
\leq\left\Vert \hat{x}_{t}-\hat{x}_{s}\right\Vert +K_{2}\left|s-t\right|+\int_{s}^{t_{f}}\hat{K}_{f}\left\Vert \hat{x}\left(\tau;t,\hat{x}_{t}\right)-\hat{x}\left(\tau;s,\hat{x}_{s}\right)\right\Vert d\tau ,\vspace{-1mm}
\end{multline}
and therefore, by the Gronwall inequality we have\vspace{-1mm}
\begin{multline}
\left\Vert \hat{x}\left(t_{f};t,\hat{x}_{t}\right)-\hat{x}\left(t_{f};s,\hat{x}_{s}\right)\right\Vert \leq\left(\left\Vert \hat{x}_{t}-\hat{x}_{s}\right\Vert +K_{2}\left|s-t\right|\right)e^{\hat{K}_{f}\left(t_{f}-s\right)}
\\
\leq\left(\left\Vert \hat{x}_{t}-\hat{x}_{s}\right\Vert +K_{2}\left|s-t\right|\right)e^{\hat{K}_{f}\left(t_{f}-t_{L}\right)}\leq K\left(\left\Vert \hat{x}_{t}-\hat{x}_{s}\right\Vert ^{2}+\left|s-t\right|^{2}\right)^{\frac{1}{2}} \! \!\! , \hspace{-4mm}
\end{multline}
for some $K<\infty$ which depends only on $t_f - t_L$, $K_1$ and $\hat{K}_f$ and not on the control input.

Since $\hat{g}$ is Lipschitz in $\hat{x}$ and $\hat{x} \left(t_f;t,\hat{x}_t\right)$ is Lipschitz in $\left(t,\hat{x}_t\right) \equiv \left(t,\left[z_t, x_t^T\right]^T \right)$, the performance function
\begin{equation}
J\left(t,t_{f},q,x,0;I_{0}\right) = \hat{g} \left(\hat{x} \left(t_f;t,\hat{x}_t\right)\right) \equiv \!\! \int_{t}^{t_{f}} \!\! l_{q}\left(x,u\right)ds+g\left(x_{q_{L}} \! \left(t_{f}\right)\right) ,
\end{equation}
is Lipschitz in $x \in B_r$ (and $\hat{x} \in \hat{B}_r$) uniformly in $t\in \left(t_L,t_f\right)$ with a Lipschitz constant independent of the control. Further, since the infimum of a family of Lipschitz functions with a common Lipschitz constant is also Lipschitz with the same Lipschitz constant, $V\left(t,t_{f},q,x,0\right)$, the value function with no switchs remaining, is Lipschitz in $x \in B_r$ uniformly in $t\in \left(t_L,t_f\right)$.

Now consider $t,s\in \left(t_j,t_{j+1}\right)$ where $t_{j+1}$ indicates a time of an autonomous switching for the trajectory $\hat{x}\left(\tau;t,\hat{x}_{t}\right)$, and consider for definiteness the case where $\hat{x}\left(\tau;s,\hat{x}_{s}\right)$ arrives on the switching manifold described locally by $m\left(x\right) = 0$ at a later time $t_{j+1}+\delta t$ (the case with an earlier arrival time can be handled similarly by considering $\delta t<0$). It directly follows by replacing $\hat{f}_{q_L}$ and $t_{f}$ by $\hat{f}_{q_j}$ and $t_{j+1}-$ in the above arguments, that
\begin{equation}
\left\Vert \hat{x}\left(t_{j+1}-;t,\hat{x}_{t}\right)-\hat{x}\left(t_{j+1}-;s,\hat{x}_{s}\right)\right\Vert \leq K^{\prime}\left(\left\Vert \hat{x}_{t}-\hat{x}_{s}\right\Vert ^{2}+\left|s-t\right|^{2}\right)^{\frac{1}{2}} \!\! .
\end{equation}

Now since
\begin{multline}
\left\Vert \hat{x}\left(t_{j+1}+\delta t-;s,\hat{x}_{s}\right)-\hat{x}\left(t_{j+1}-;s,\hat{x}_{s}\right)\right\Vert
\\
\leq K_2 \left|t_{j+1}+\delta t - t_{j+1}\right| =  K_2 \left|\delta t\right|,
\end{multline}
and
\begin{multline}
\left\Vert \hat{x}\left(t_{j+1}+\delta t-;s,\hat{x}_{s}\right)-\hat{x}\left(t_{j+1}-;t,\hat{x}_{t}\right)\right\Vert^2
\\
\leq \left\Vert \hat{x}\left(t_{j+1}+\delta t-;s,\hat{x}_{s}\right)-\hat{x}\left(t_{j+1}-;s,\hat{x}_{s}\right)\right\Vert^2 
\\
+ \left\Vert \hat{x}\left(t_{j+1}-;t,\hat{x}_{t}\right)-\hat{x}\left(t_{j+1}-;s,\hat{x}_{s}\right)\right\Vert^2 ,
\end{multline}
it is sufficient to show that the upper bound for $\left|\delta t\right|$ is proportional to $\left(\left\Vert \hat{x}_{t}-\hat{x}_{s}\right\Vert ^{2}+\left|s-t\right|^{2}\right)^{\frac{1}{2}}$. This can be shown to hold by considering the fact that
\vspace{-1mm}
\begin{multline}
m\left(x\left(t_{j+1}+\delta t-;s,x_{s}\right)\right)
\\
=m\left(x\left(t_{j+1}-;s,x_{s}\right)+\int_{t_{j}}^{t_{j}+\delta t}f_{q_{j}}\left(x\left(\tau;s,x_{s}\right),u_{t_{j}-}\right)d\tau\right)
\\
\hspace{3pt}=m\left(\! x\left(t_{j\!+\!1}^{\;\;-};t,x_{t}\right)\!+\!\delta x\left(t_{j\!+\!1}^{\;\;-}\right)\!+\!\int_{t_{j}}^{t_{j}\!+\!\delta t}\!\!\! f_{q_{j}}^{\left(x\left(\tau;s,x_{s}\right),u_{t_{j}^{\;-}}\right)}\!\! d\tau\!\right)
\\
=m\left(x\left(t_{j+1}-;t,x_{t}\right)\right)=0 .
\vspace{-1mm}
\end{multline}

For $\left\Vert \delta x\left(t_{j+1}-\right) \right\Vert < \epsilon_{j+1}$ sufficiently small, 
\vspace{-1mm}
\begin{equation}
\nabla m^{T}\left(\delta x_{t_{j+1}-}+\int_{t_{j}}^{t_{j}+\delta t}f_{q_{j}}^{\left(x\left(\tau;s,x_{s}\right),u_{t_{j}-}\right)}d\tau\right)+O\left(\epsilon_{j+1}^{2}\right)=0,
\vspace{-1mm}
\end{equation}
which is equivalent to
\vspace{-1mm}
\begin{equation}
\nabla m^{T}\delta x\left(t_{j+1}-\right)+\int_{t_{j}}^{t_{j}+\delta t}\nabla m^{T}f_{q_{j}}^{\left(x\left(\tau;s,x_{s}\right),u_{t_{j}-}\right)}d\tau
+O\left(\epsilon_{j+1}^{2}\right)=0.
\vspace{-1mm}
\end{equation}

Due to the transversal arrival of the trajectories with respect to the smooth switching manifold, $\left|\nabla m^T f_{q_j}\right|$ is lower bounded by a strictly positive number $k_{m,f}$ (see \eqref{TransversalityOfTrajectoriesToManifolds}) and hence,
\vspace{-1mm}
\begin{multline}
\hspace{-3mm}\left|\nabla m^{T}\delta x\left(t_{j+1}-\right)+O\left(\epsilon_{j+1}^{2}\right)\right|
=\left|\int_{t_{j}}^{t_{j}+\delta t}\nabla m^{T}f_{q_{j}}^{\left(x\left(\tau;s,x_{s}\right),u_{t_{j}-}\right)}d\tau\right|
\\
\geq\int_{t_{j}}^{t_{j}+\delta t}\left|\nabla m^{T}f_{q_{j}}^{\left(x\left(\tau;s,x_{s}\right),u_{t_{j}-}\right)}\right|d\tau
\geq k_{m,f}\left|\delta t\right|,
\vspace{-1mm}
\end{multline}
which gives
\vspace{-1mm}
\begin{multline}
\left|\delta t\right|\leq\frac{1}{k_{m,f}}\left(\left\Vert \nabla m\right\Vert \left\Vert \delta x\left(t_{j+1}-\right)\right\Vert +\left|O\left(\epsilon_{j+1}^{2}\right)\right|\right)
\\
\leq\frac{1}{k_{m,f}}\left\Vert \nabla m\right\Vert \epsilon_{j+1}+\epsilon_{j+1}
\leq\left(\frac{\left\Vert \nabla m\right\Vert}{k_{m,f}} +1\right)\epsilon_{j+1}=K_{j+1}\epsilon_{j+1} .
\vspace{-1mm}
\end{multline}

Hence, for $t\in\left(t_j,t_{j+1}\right)$ and $x_t\in B_r$ there exist a neighbourhood $N_{r_x}\left(x_t\right)$ such that for $s\in\left(t_j,t_{j+1}\right)$ and $x_s \in {\cal N}_{r_x}\left(x_t\right)$ we have $\left\Vert \delta x\left(t_{j+1}-\right) \right\Vert \leq K^{\prime}\left(\left\Vert \hat{x}_{t}-\hat{x}_{s}\right\Vert ^{2}+\left|s-t\right|^{2}\right)^{\frac{1}{2}} < \epsilon_{j+1}$ in order to ensure that $\delta t \leq K_{j+1} \epsilon_{j+1}$ and consequently
\vspace{-1mm}
\begin{equation}
\!\left\Vert \hat{x}\!\left(t_{j+\!1}\!+\!\delta t\!-\!;s,\hat{x}_{s}\!\right)\!-\!\hat{x}\!\left(t_{j+\!1}\!-\!;t,\hat{x}_{t}\!\right)\right\Vert \!\leq\! K\!\left(\!\left\Vert \hat{x}_{t}\!-\!\hat{x}_{s}\right\Vert ^{2}\!\!+\!\left|s\!-\! t\right|\!^{2}\!\right)\!^{\frac{1}{2}},
\vspace{-1mm}
\end{equation}
for $K$ independent of the control. Since $\hat{\xi}$ is smooth and time invariant, it is therefore Lipschitz in $\hat{x}$ uniformly in time. Since $c\left(x\left(t_{J+1}\right)\right)$ is embedded in $\hat{\xi}$ (see \eqref{ExtendedJump2}), at the switching time $t_{j+1}$ we have
\vspace{-1mm}
\begin{equation}
J\!\left(t_{j+1}\!-\!,q_{j},\hat{x},L\!-\! j;I_{L\!-\! j}\!\right)\!=\! J\!\left(t_{j\!+\!1},q_{j\!+\!1},\hat{\xi}\left(\hat{x}\right),L\!-\! j\!-\!1;I_{L\!-\! j\!-\!1}\!\right) , \vspace{-1mm}
\end{equation}
the Lipschitz property for the cost to go function $J\left(t_{j+1}-,q_{j},\hat{x},L-j;I_{L-j}\right)$ follows from the smoothness of $\hat{\xi}$ and the Lipschitz property of $J\left(t,q_{j+1},\hat{x}_t,L-j-1;I_{L-j-1}\right)$. Namely, by backward induction from the Lipschitzness of $J\left(t,q_{L},\hat{x}_t,0;I_{o}\right)$ proved earlier, it is concluded that $J\left(t,q_{L-1},\hat{x}_t,1;I_{1}\right)$ is Lipschitz, from which $J\left(t,q_{L-2},\hat{x}_t,1;I_{2}\right)$ is concluded to be Lipschitz, etc. Since the Lipschitz constant is independent of the control and because the infimum of a family of Lipschitz functions with a common Lipschitz constant is also Lipschitz with the same Lipschitz constant, \eqref{VisLipschitz} holds and hence, the value function is Lipschitz.
\end{proof}

\vspace{-12mm}

\begin{IEEEbiography}[{\includegraphics[width=1in,height=1.25in,clip,keepaspectratio]{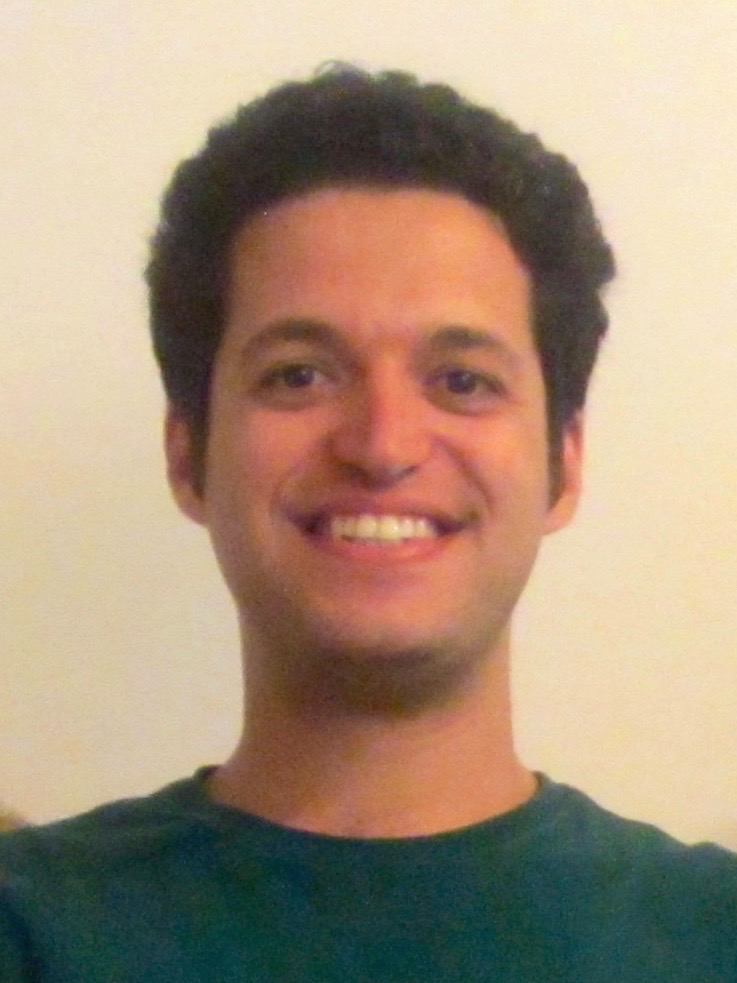}}]%
{Ali Pakniyat}
received the B.Sc. degree in 2008 from the Department of Mechanical Engineering, Shiraz University, Shiraz, Iran, and the M.Sc. degree in Applied Mechanics and Design from the School of Mechanical Engineering, Sharif University of Technology, Tehran, Iran, in 2010. He received the Ph.D. degree in 2016 from the Department of Electrical and Computer Engineering, McGill University, Montreal, Canada. His research interests include \text{deterministic} and stochastic optimal control, \text{nonlinear} and hybrid systems, analytical mechanics and chaos, with applications in automotive industry, sensors and actuators, and robotics.
\end{IEEEbiography}
\vspace{-12mm}
\begin{IEEEbiography}[{\includegraphics[width=1in,height=1.25in,clip,keepaspectratio]{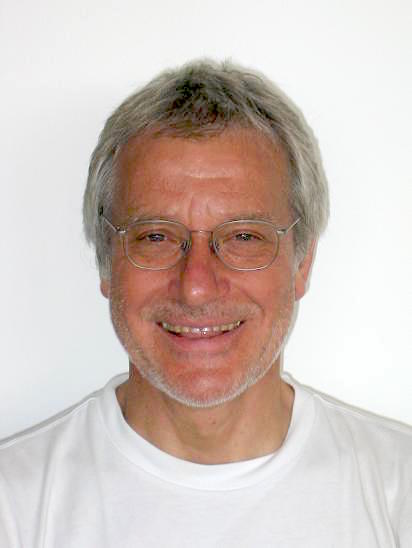}}]%
{Peter E. Caines}
received the BA in mathematics from Oxford University in 1967 and the PhD in systems and control theory in 1970 from Imperial College, University of London, under the supervision of David Q. Mayne, FRS. After periods as a postdoctoral researcher and faculty member at UMIST, Stanford, UC Berkeley, Toronto and Harvard, he joined McGill University, Montreal, in 1980, where he is James McGill Professor and Macdonald Chair in the Department of Electrical and Computer Engineering. In 2000 the adaptive control paper he coauthored with G. C. Goodwin and P. J. Ramadge (IEEE Transactions on Automatic Control, 1980) was recognized by the IEEE Control Systems Society as one of the 25 seminal control theory papers of the 20th century. He is a Life Fellow of the IEEE, and a Fellow of SIAM, the Institute of Mathematics and its Applications (UK) and the Canadian Institute for Advanced Research and is a member of Professional Engineers Ontario. He was elected to the Royal Society of Canada in 2003. In 2009 he received the IEEE Control Systems Society Bode Lecture Prize and in 2012 a Queen Elizabeth II Diamond Jubilee Medal. Peter Caines is the author of Linear Stochastic Systems, John Wiley, 1988, and is a Senior Editor of Nonlinear Analysis – Hybrid Systems; his research interests include stochastic, mean field game, decentralized and hybrid systems theory,  together with their applications in a range of fields.
\end{IEEEbiography}

\end{document}